\Crefname{ALC@unique}{Line}{Lines}
\crefname{hypothesis}{Hypothesis}{Hypotheses}
\title{Uncertainty Quantification of Inclusion Boundaries in the Context of X-ray Tomography\thanks{Submitted to the editors DATE.\funding{\copc{This project was funded by a Villum Investigator grant (no.~25893) from The Villum Foundation.}}}
}
\author{Babak Maboudi Afkham\thanks{Department of Applied Mathematics and Computer Science, Technical University of Denmark, Kgs. Lyngby, Denmark
  (\email{bmaaf@dtu.dk}, \email{yido@dtu.dk}, \email{pcha@dtu.dk}).}
\and Yiqiu Dong\footnotemark[2] \and Per Christian Hansen\footnotemark[2]
}
\newcommand{\correct}[1]{{\color{black} #1}}
\newcommand{\cofinal}[1]{{\color{black} #1}}
\newcommand{\coyq}[1]{{\color{black} #1}}
\newcommand{\copc}[1]{{\color{black} #1}}
\newcommand{\delete}[1]{}
\begin{document}

\maketitle

\begin{abstract}
  In this work, we describe a Bayesian framework for \coyq{reconstructing the boundaries of piecewise smooth regions in} the X-ray computed tomography (CT) problem \copc{in} an infinite-dimensional setting. \copc{In addition to} \coyq{the reconstruction, we are also able to quantify the uncertainty of the} \copc{predicted boundaries.}
  \copc{Our approach is goal oriented, meaning that we directly detect}
  the discontinuities \copc{from the data}, instead of reconstructing the entire image.
  \copc{This} drastically reduces the dimension of the problem\coyq{, which makes the application of Markov Chain Monte Carlo (MCMC) methods feasible.} We show that our method provides an excellent platform for challenging X-ray CT scenarios (e.g., in case of noisy data, limited angle, or sparse angle imaging). We investigate the \correct{performance} and accuracy of our method on synthetic data \coyq{as well as \correct{on} real-world data.} The numerical results indicate that our method provides an accurate method in detecting boundaries \coyq{of} piecewise smooth regions and quantifies the uncertainty in the
  prediction\delete{, in the context of X-ray CT}.

\end{abstract}

\begin{keywords}
  X-ray CT, Bayesian framework, inverse problems, Whittle-Mat\'ern field,
  \copc{goal-oriented UQ}
\end{keywords}

\begin{AMS}
  60G10, 60G15, 60G50, 60G60, 60J20, 28-08, 46B99, 65C05, 65C20, 65C40
\end{AMS}

\section{Introduction} \label{sec:intro}
Computed tomography (CT) imaging is the task of reconstructing a
\copc{positive attenuation field (in the form of an image)} from a finite number of projections (e.g., \coyq{sinograms}). CT reconstruction is often formulated as an inverse problem \cite{kaipio2006statistical}. \emph{Filtered back-projection} \cite{natterer2001mathematics} is a classic reconstruction approach to solve this inverse problem for some CT settings.
However, \copc{the quality of the reconstructed image is compromised by}
imaging challenges \copc{such as} noise, sparse angle imaging (to reduce the amount of harmful radiation), or limited angle imaging (to avoid obstacles in the imaging site or due to application set-up, e.g., in mammography). Therefore, finding alternative approaches for CT reconstruction has attracted attention in the past few decades \cite{chan2001active,dahl2017computing,ramlau2007mumford,soussen2004polygonal,sullivan1995reconstructing,wang2017x}.

In many CT reconstruction methods, such as algebraic iterative methods and regularization methods \cite{Hansen2021}, the goal is to identify objects in the image with approximately homogeneous attenuation coefficient distinct from the background attenuation \cite{debreuve1998attenuation,elhalawani2017matched,gibson2018automatic,weston2019automated}. Therefore, CT reconstruction is often followed by an image segmentation step to partition the image into piecewise smooth/constant regions. The boundaries between such regions often carry valuable information \cite{wang2018comprehensive}. Error propagation from reconstruction to segmentation, due to concatenation of such methods, can introduce artifacts in the \coyq{segmentation}. Such effects are also amplified in case of noisy or incomplete data.

To avoid such artifacts many methods attempt to combine the reconstruction and the segmentation (and subsequently boundary extraction) steps. One common approach is to describe the images as a level set of a smooth functions, see, e.g., \cite{alvino2004tomographic,chan2001active,ley2001lower,ramlau2007mumford,wang2017x,yoon2006level,yoon2010simultaneous} for \delete{some} selected examples. \correct{Lambda Tomography \cite{faridani1997local,faridani1992local,vainberg1981reconstruction,webber2020joint} is another approach in identifying boundaries. These methods are based on filtered back-projection method where the filters are chosen to emphasize boundaries.} Another common approach is to construct a deforming/parametric curve which evolves to fit the boundaries between partitions, see, e.g., \cite{dahl2017computing,soussen2004polygonal,sullivan1995reconstructing,thirion1992segmentation} as a non-exhaustive selection. Large parameter spaces, dependency on discretization, noisy data, and limited angle imaging can be challenging for some of the mentioned methods. \copc{Moreover}, all the methods mentioned above lack the quantification of the uncertainties in the reconstructed/segmented images with respect to noisy/perturbed data. Uncertainty quantification could be particularly important in applications where images are used to determine the location and the size of objects. For example, in medical imaging, CT reconstruction is used to \copc{track} the evolution of tumor boundaries over time and make treatment decisions.

A popular approach to characterize uncertainties for inverse problems is within the Bayesian framework. In this approach, all quantities of a model are represented as random variables. The solution to the inverse problem is then the probability distribution, \emph{the posterior distribution}, of the quantity of interest after all given information (e.g., the \emph{prior} belief in the model) is incorporated into the model. Qualities of the posterior distribution can then be interpreted as the degree of confidence in predicting the quantity of interest. \coyq{In general, Markov Chain Monte Carlo (MCMC) methods \cite{mcbook} are applied to explore the posterior distribution.} \coyq{In the context of X-ray CT, the Bayesian \copc{framework} has been successfully applied, especially for reconstructing the attenuation field, see, e.g.,
\delete{but not limited to} \cite{chapdelaine2019variational,chen2009bayesian,2104.06919,wang2017x}.
}

In the past decade, a formulation of a Bayesian inversion theory in an infinite dimensional setting has attracted attention, see \cite{Dashti2017} and the references therein. In this setting, model parameters are modeled as random functions rather than real-valued random variables. This provides a discretization-independent numerical platform for exploring the posterior distribution. \cofinal{Although posterior distributions for such problems are in infinite dimensions, the MCMC methods for exploring such distributions are similar to the more traditional (finite-dimensional) ones.}

Infinite-dimensional Bayesian methods for tomography problems are not new. \coyq{For example,}
\copc{we mention} recent works in the context of electrical impedance tomography (EIT), inverse scattering problem and other partial differential equation based inverse problems, see \cite{doi:10.1137/120894877,Dunlop2016,dunlop2020hyperparameter,liu2018image,huang2020bayesian,1930-8337_2016_4_1007,Roininen2014} and the references therein. The underlying partial differential equation provides a natural platform (a Hilbert space) for the infinite-dimensional random variable to be well defined. Detailed analysis of these inverse problems in a Bayesian setting \cite{Dashti2017} shows that the solution (the posterior distribution) is well-defined and bounded under perturbations \copc{of the} data. Therefore, the Bayesian platform provides an alternative method to investigate the well-posedness of an inverse problem.

However, to the best knowledge of the authors, we still lack a well-established infinite-dimensional framework for the \coyq{CT problem}. \coyq{In this paper, we show} that the well-posedness results, mentioned above, can be extended to the CT problem. This provides an excellent tool to evaluate and quantify the uncertainties in the reconstruction and segmentation of images.

\coyq{Here, we focus on looking for objects/inclusions in an image with a homogeneous attenuation coefficient distinct from the background. Especially, we are interested in reconstructing the boundaries of the objects and quantifying the uncertainties in this reconstruction due to noisy or incomplete data. One approach in evaluating such uncertainties is to consider a Markov random field (MRF) prior. We can estimate uncertainties using, e.g., MCMC methods. The edges are then detected using thresholding-based methods \cite{10.5555/1965423}. We call this approach the sampling-then-threshholding (STT) method. The advantage of STT methods is that advanced methods
\copc{for} reconstruction and edge detection/segmentation have been proposed. But some of the challenges of using STT methods are: 1) error propagation from reconstruction to segmentation; 2) resolution dependency, i.e., the quality of reconstructed boundaries in STT depends on the resolution of discretized attenuation field; 3) \copc{large} computational cost due to the large number of samples of a 2D reconstruction problem. \copc{These challenges make the use of STT methods unattractive.} In \Cref{fig:1.pixel}, we show a numerical comparison between our method, which will be introduced later, and an STT method that combines the two methods in \cite{10.5555/1965423,2104.06919}. The STT method obtains the boundaries of the objects in each sample after applying the edge detection algorithm. The mean of the boundaries can give us some information on the uncertainties in the location of the edges. But note that in this way we cannot really quantify the uncertainties of the boundary curve. In addition, in \Cref{fig:1.pixel}(f) we can clearly see the resolution dependency.}

\begin{figure}[thbp]
    \begin{center}
    \begin{minipage}[t]{4cm}
\includegraphics[totalheight=3.8cm,
keepaspectratio=true]{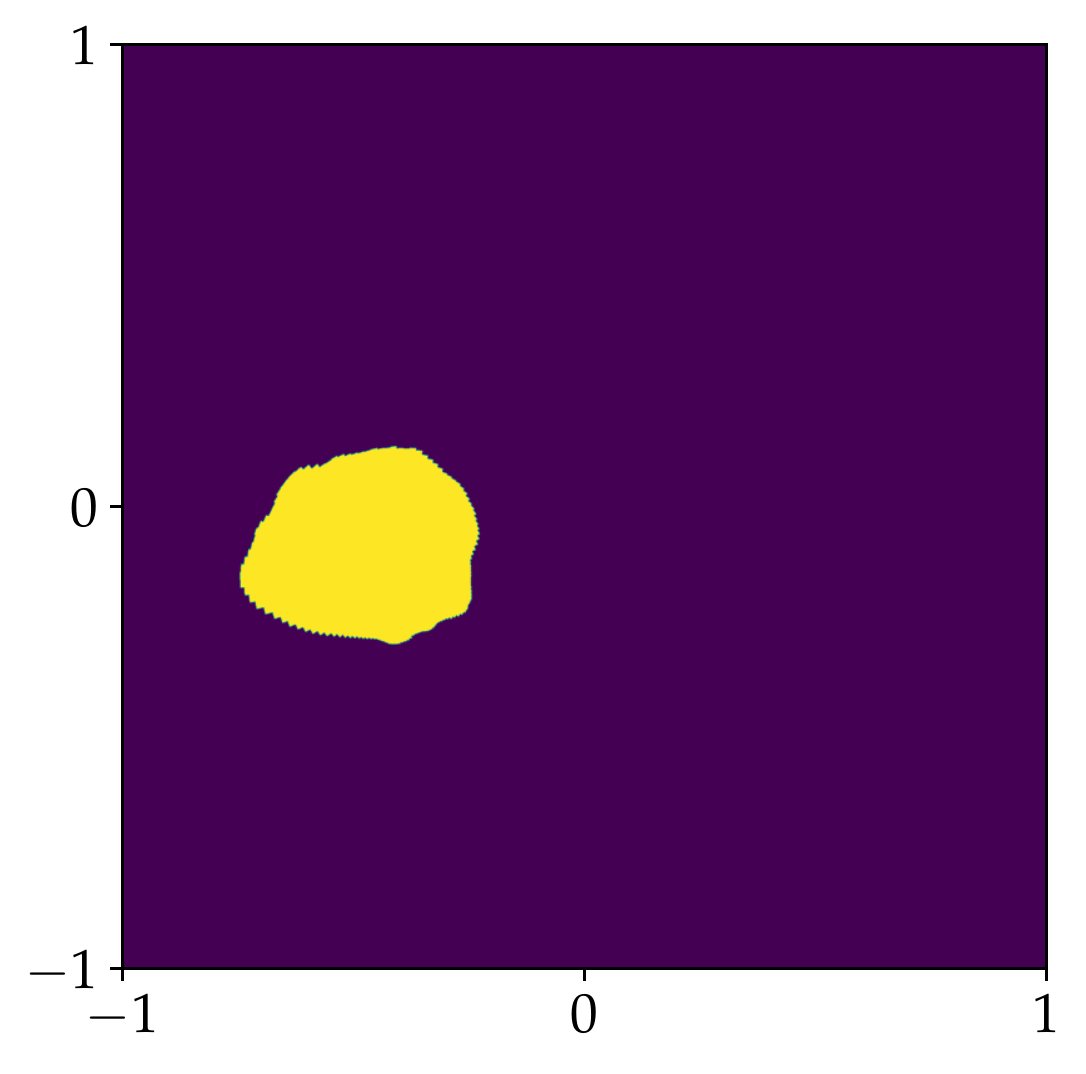}\\
\centering{(a) }
\end{minipage}
\begin{minipage}[t]{4.2cm}
\includegraphics[totalheight=3.8cm,
keepaspectratio=true]{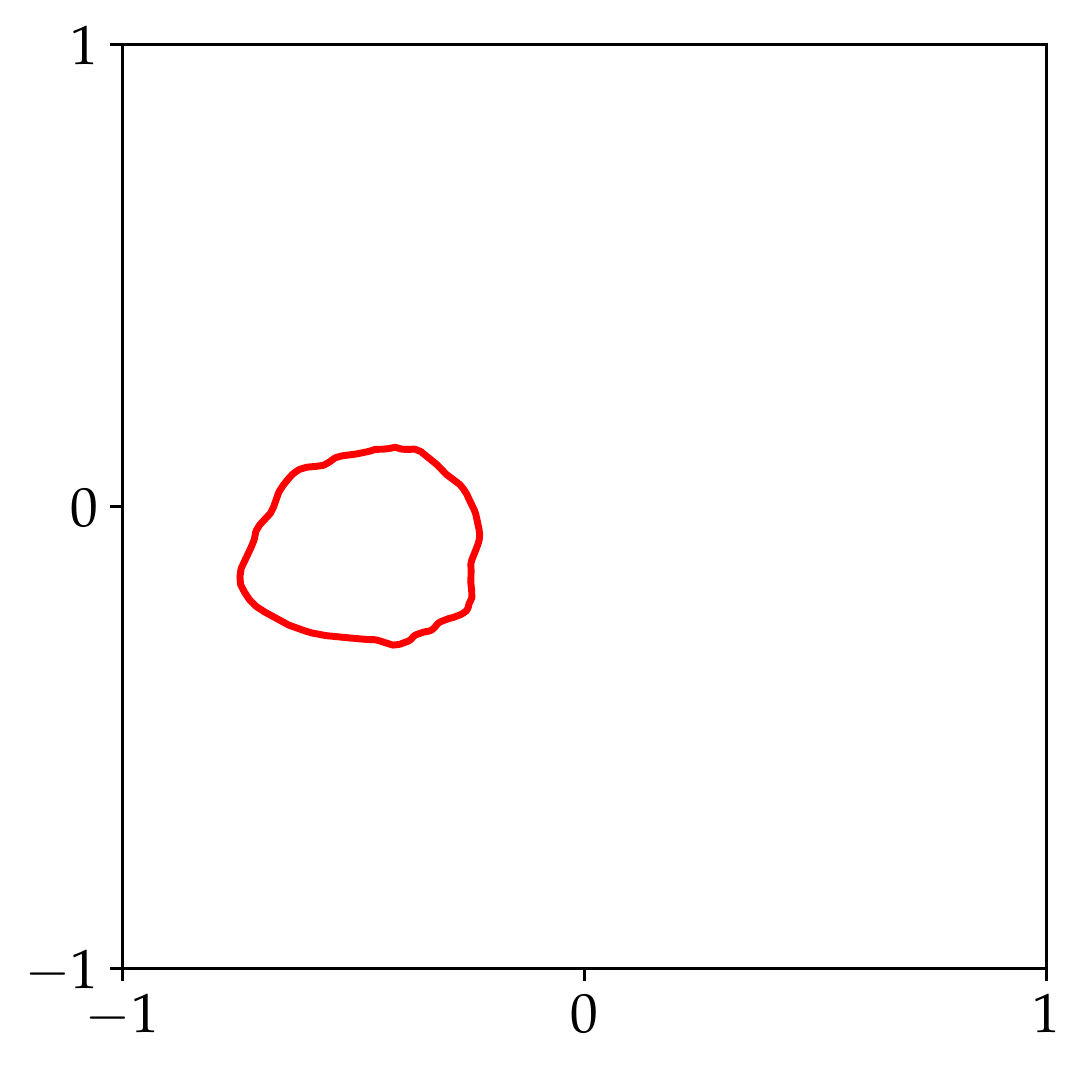}\\
\centering{(b)}
\end{minipage}
\begin{minipage}[t]{4cm}
\includegraphics[totalheight=3.8cm,
keepaspectratio=true]{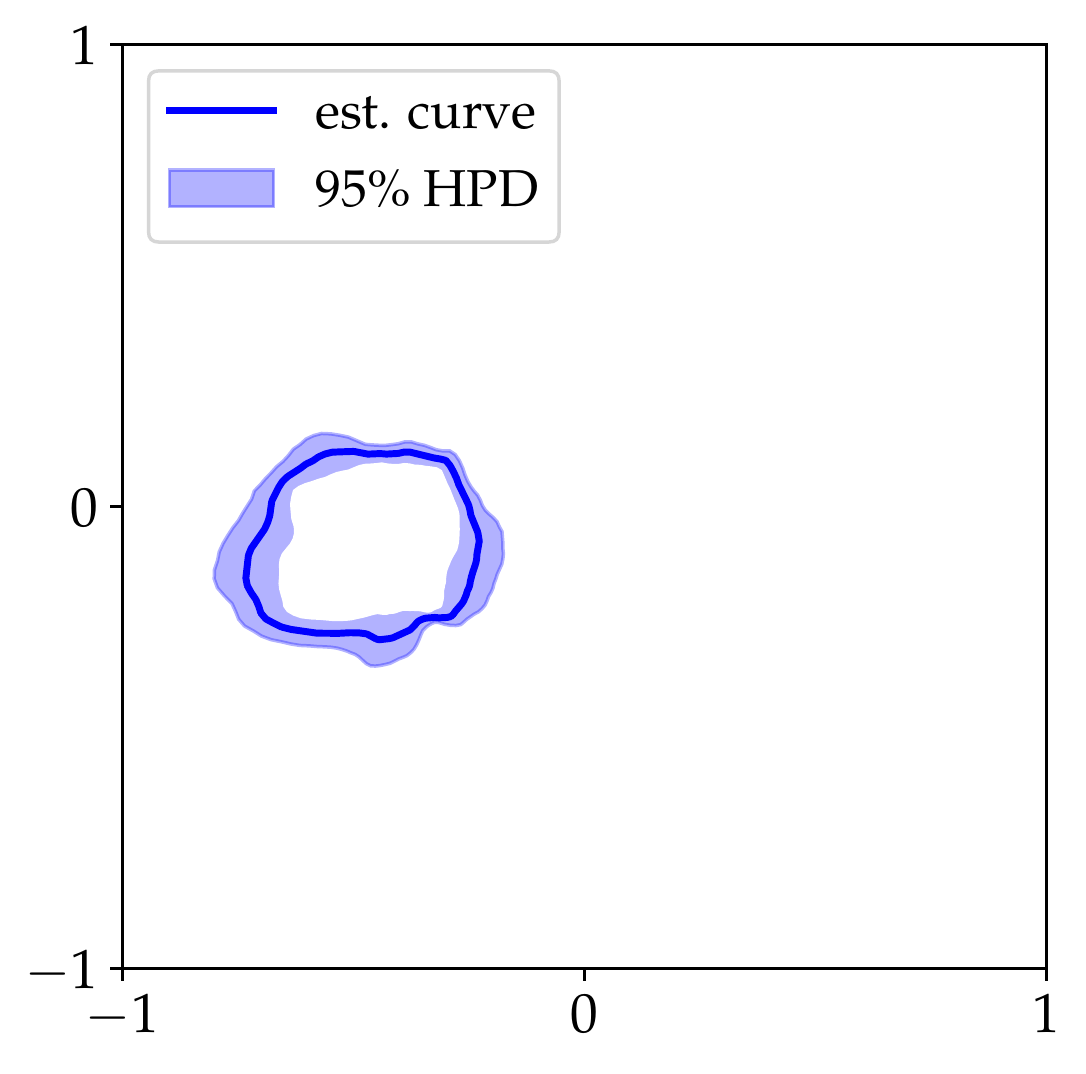}\\
\centering{(c)}
\end{minipage}\\
\begin{minipage}[t]{4cm}
\includegraphics[totalheight=4cm,trim=2.5cm 0cm 2.7cm 0cm,clip]{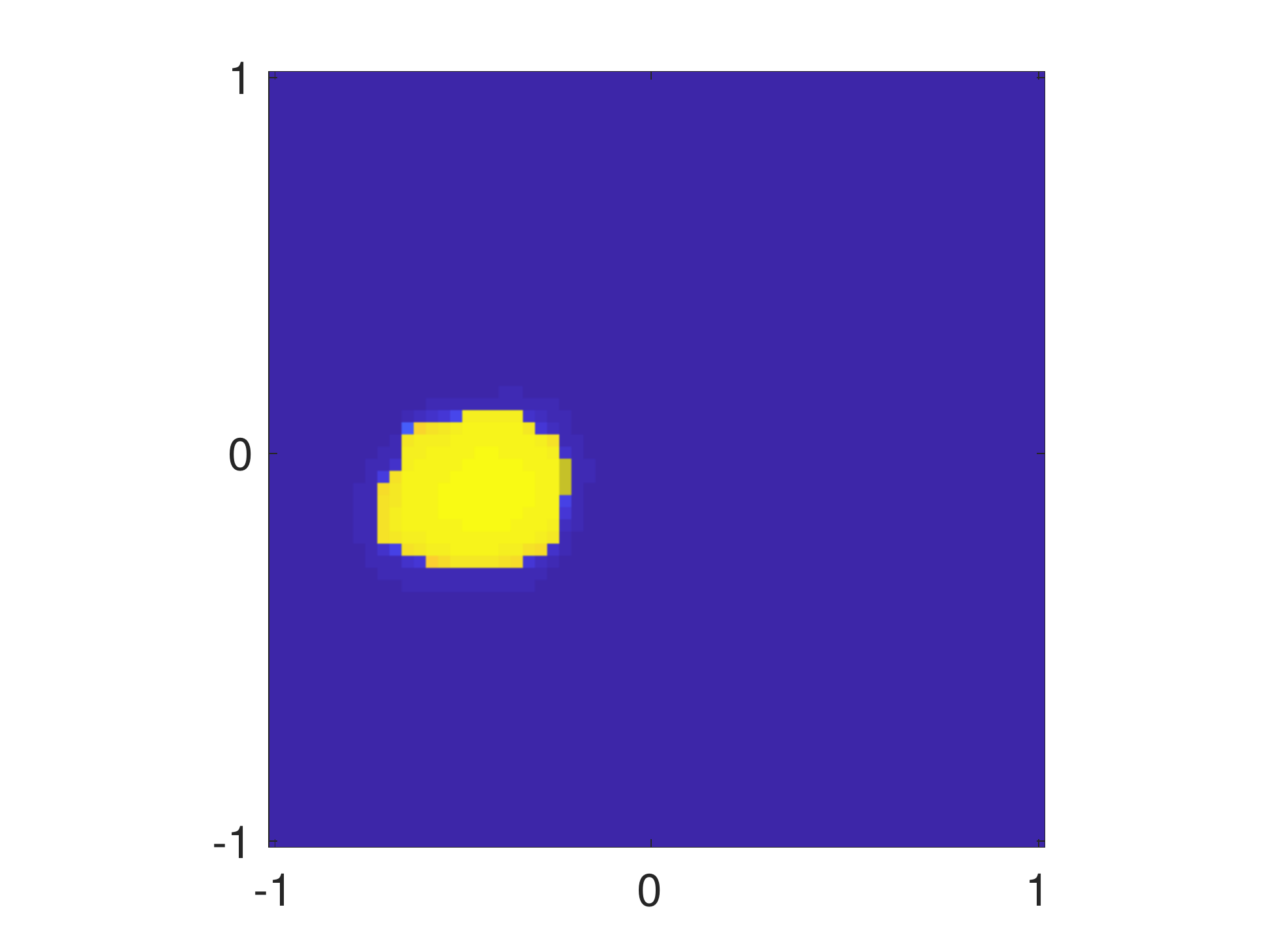}\\
\centering{(d)}
\end{minipage}
\begin{minipage}[t]{4.2cm}
\includegraphics[totalheight=4cm,trim=2.5cm 0cm 2.0cm 0cm,clip]{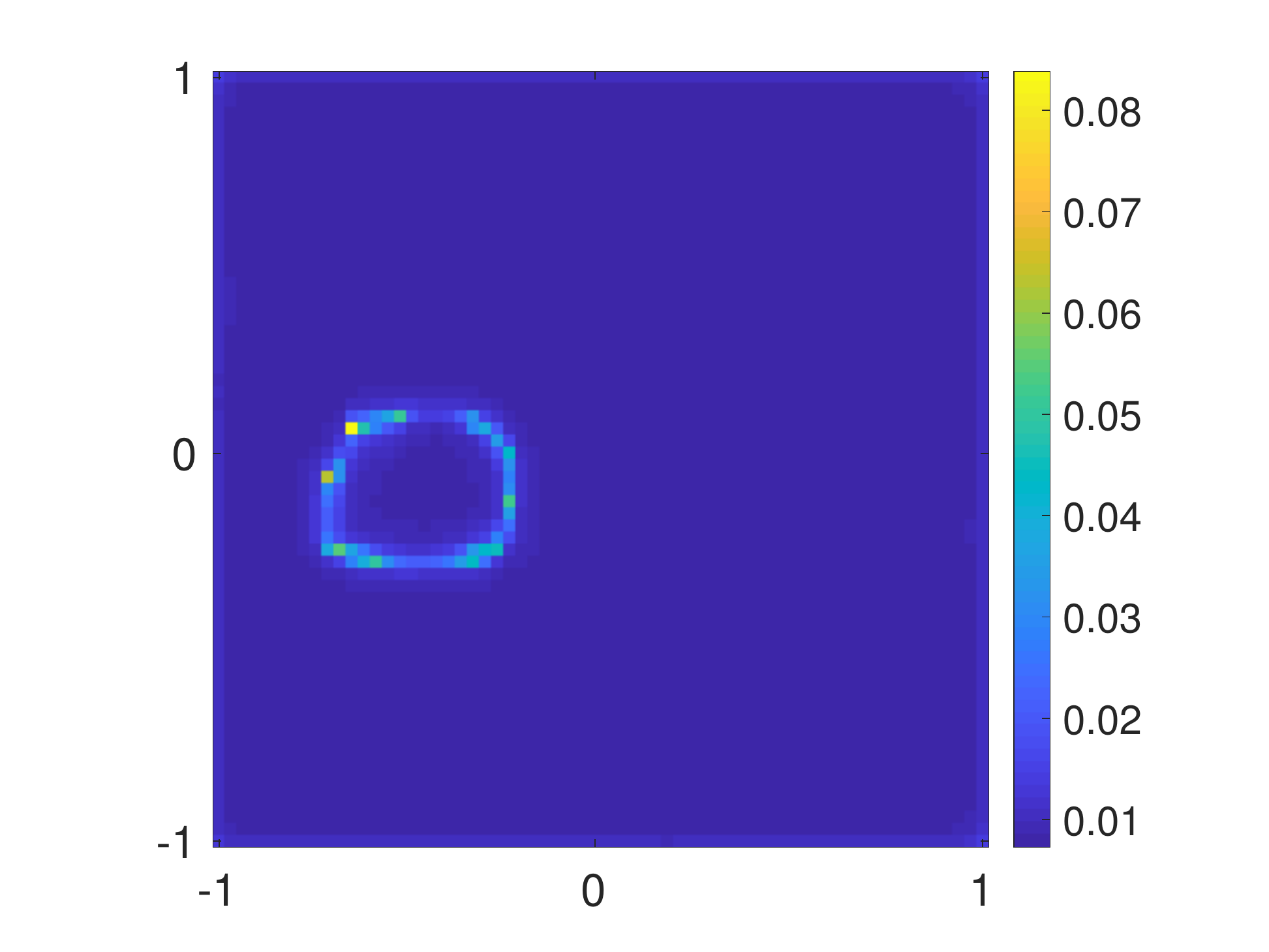} \\
\centering{(e)}
\end{minipage}
\begin{minipage}[t]{4cm}
\includegraphics[totalheight=4cm,trim=2.5cm 0cm 2.7cm 0cm,clip]{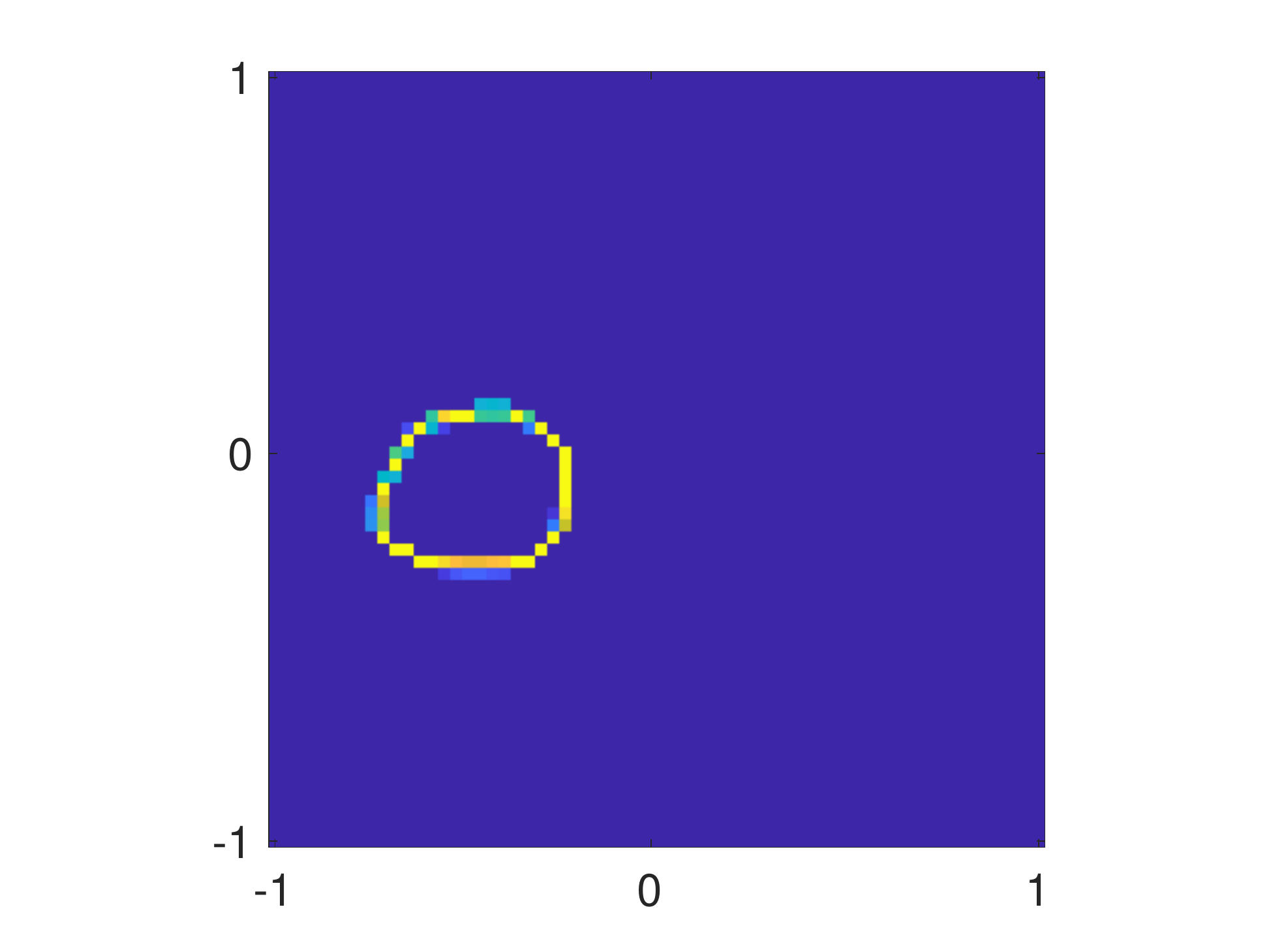}\\
\centering{(f)}
\end{minipage}
    \caption{\coyq{Comparing the performance of our method and the sampling-then-threshholding (STT) method which combines the methods in  \cite{10.5555/1965423,2104.06919}. (a) True attenuation field, (b) true boundary, (c) the estimated boundary by using our method together with the 95\% highest posterior density credibility interval, (d) the mean of the attenuation reconstructions by STT, (e) the standard deviation of the reconstructions by STT, (f) the mean of the boundaries by STT.} }
    \end{center}
\label{fig:1.pixel}
\end{figure}

In this paper, we provide a goal-oriented, infinite-dimensional Bayesian framework for the CT problem. \coyq{In our method, we reconstruct the boundary directly, and without the need for reconstructing the attenuation field. This avoids the error propagation and reduces the dimensionality of the problem from finding a 2D image to a 1D boundary of a region. In addition, we model the boundaries as random functions in order to avoid resolution dependency. Our method consists of two stages: in the stage 1, we approximate locations of all objects in order to maintain the well-posedness of the problem that is only valid for single inclusion; in stage 2, we identify the boundaries of each object, which can be parallelized with respect to the number of inclusions to further improve the efficiency. Our method provides an uncertainty band around the boundary to quantify the reliability in the prediction of the boundaries. We apply the preconditioned Crank-Nicolson (pCN) method \cite{Cotter2013} \correct{as an MCMC method for exploring the posterior}. The pCN ensures that the  reconstruction of the boundaries is independent of the discretization of the random variables. \Cref{fig:1.pixel}(c) shows an example of our method predicting boundaries of the inclusion together with the uncertainty in this prediction. It is \correct{clear} that our result is resolution independent.}

This paper is organized as follows: In \Cref{sec:1} we formulate the \coyq{CT problem} and introduce the Radon transform. The infinite dimensional Bayesian framework \copc{for} inverse problems is presented \coyq{in \Cref{sec:2} and \Cref{sec:2.likefunc}}. \coyq{In \Cref{sec:2},} we introduce the Mat\'ern-Whittle random field and construct the log-Gaussian, level set and the star-shaped priors. \coyq{In \Cref{sec:2.likefunc}, we} \correct{construct a likelihood function} for the CT problem, introduce the posterior, and discuss the \correct{the existence and the} well-posedness of the posterior distribution. In \Cref{sec:3} we introduce \coyq{our two-stage method to identify} inclusion boundaries. We evaluate the performance of the method for simulated images in \Cref{sec:4} for single and multiple inclusions, as well as in sparse and limited angle imaging settings. In addition, we show the performance of \coyq{our} method on a real CT-scan of a tomographic X-ray data of a lotus root slice filled with different chemical elements \cite{bubba_tatiana_a_2016_1254204}. We present conclusive remarks in \Cref{sec:5}.

\section{Radon Transform and Forward Model} \label{sec:1}

Tomography, or slice/volume imaging, comprises methods in reconstructing the internal structure of an object from external measurements. In X-ray computed tomography,
\copc{the X-rays interact with the matter in the object, and we measure the attenuation of
the rays on detectors placed on the other side of the object} \cite{Smith2009}.

A common approach in modeling X-ray interaction with an object is described by line integrals over a density field. Let $D\subset \mathbb R^2$ be a bounded region with Lipschitz boundary.
\copc{Let $0 < \alpha \in L^2(D)$ denote} a density field defined on $D$. This field represents the attenuation of the X-rays \cite{Smith2009}. A measurement can then be described by the line integral
\begin{equation} \label{eq:1.line_integ}
    R_{\theta,s}[\alpha] := \int_{L_{\theta,s}} \coyq{\alpha(x)}\ dl.
\end{equation}
Here, $s\in[-1,1]$ parameterizes points on the line $L^\perp_{\theta}$ passing through the origin and making the angle $\theta\in[0,\theta_{\text{max}})$, $\theta_{\text{max}}\leq \pi$, with the $x$-axis. Furthermore, $L_{\theta,s}$ is the perpendicular line to $L^\perp_{\theta}$ crossing it at point $s$, and $dl$ is an infinitesimally small length on $L_{\theta,s}$. We assume that $\alpha$ has a compact support such that the integration in \eqref{eq:1.line_integ} is carried over a finite section of $L_{\theta,s}$.

 Note that $R_{\cdot, \cdot}$ can be viewed as a linear functional from lines in $\mathbb R^2$ to $\mathbb R$ in which case it is referred to as the \emph{Radon transform} \cite{Hansen2021,natterer2001mathematics,Smith:85}. The simplest set of measurement lines \delete{to carry line integral \eqref{eq:1.line_integ}} are a finite set of equidistant parallel lines perpendicular to $L^\perp_\theta${\copc, for a finite set of angles $\theta$}. This setup is referred to as the \emph{parallel-beam geometry}. The collected measurements of this type is commonly referred to as a \emph{sinogram}.



We discretize $\theta$ into $N_{\theta}$ angles and $s$ into $N_{s}$ detector pixels to construct $N := N_{\theta}\times N_{s}$ measurements. We define an observation vector $y(\alpha) \in \mathbb R^N$ to contain elements of noisy measurement of type
\begin{equation}\label{eq:1.forward}
    y_{i,j}(\alpha) = R_{\theta_i,s_j} \copc{[\alpha]} + \varepsilon_{i,j}, \qquad i = 1,\dots, N_{\theta}, ~ j=1,\dots,N_s,
\end{equation}
where $\varepsilon_{i,j}$ are independent and identically distributed (i.i.d.) following $\mathcal N(0,\sigma_{n}^2)$, for some $\sigma_{\copc{\mathrm{noise}}}>0$. Note that, in general, noise in CT data is non-Gaussian. The choice of observation noise in \eqref{eq:1.forward} is an approximation. Representing \eqref{eq:1.forward} in vector form we obtain
\begin{equation} \label{equation:1.forward_matrix}
    \coyq{\boldsymbol{y}(\alpha) = \coyq{G\alpha} + \boldsymbol{\varepsilon}}.
\end{equation}
Here, $G: L^2(D) \to \mathbb R^N$ contains line-integrals $R_{\theta_j,s_j}$ referred to as the \emph{forward map} and $\coyq{\boldsymbol{\varepsilon}} \sim \mathcal N(\mathbf 0, \copc{\Sigma})$, with $\Sigma = \text{diag}(\sigma_{\mathrm{noise}}^2,\dots,\sigma_{\mathrm{noise}}^2)$. Note that since $R_{\theta_j,s_j}$ is linear then $G$ is locally Lipschitz. The task of finding $\alpha$ from \coyq{$\boldsymbol{y}(\alpha)$} forms an inverse problem. This inverse problem is severely under-determined since $\alpha$ is infinite dimensional and \coyq{$\boldsymbol{y}$} is finite dimensional.

\correct{In many tomography applications we are interested in finding the discontinuities of $\alpha$ and not reconstructing the entire $\alpha$. In this paper, we take a \emph{goal-oriented} approach in finding and quantifying the uncertainty of discontinuities in $\alpha$. Such discontinuities can be parameterized with a significantly lower dimensionality than $\alpha$.}

%

In this \coyq{paper}, we take a Bayesian approach in representing the tomography inverse problem. Severe under-determination of the inverse problem is regularized by a prior knowledge/assumption on $\alpha$ in the form of a probability distribution, \coyq{which is introduced in \Cref{sec:2}}. \coyq{In \Cref{sec:2.likefunc}, we apply the Bayes' theorem to combine the prior distribution with the forward map $G$, then derive a novel posterior distribution which represents the solution to the inverse problem.}

\section{\coyq{Prior Models}} \label{sec:2}
In this section we introduce various prior modelling techniques to construct a density field $\alpha$, \coyq{which is inspired by the works in \cite{Dashti2017,Dunlop2016,1930-8337_2016_4_1007,stuart_2010}. We first review how to construct random functions on a Hilbert space by Gaussian random variables, then present two prior models used in our method.}


\subsection{\coyq{Review of Gaussian Random Variables}} \label{sec:2.prior}

Let $(\Omega, \mathcal A, \mathbb P)$ be a complete probability space with $\Omega$ a measurable space, $\mathcal A$ a $\sigma$-algebra defined on $\Omega$ and $\mathbb P$ a probability measure. Furthermore let $(H,\langle \cdot , \cdot \rangle$) be a Hilbert space. See \cite{nla.cat-vn2458246} for further information on a probability space. A random element $\xi \in H$ is called an $H$-valued Gaussian random variable if $\langle \xi , \zeta \rangle$, for all $\zeta \in H$, is a real-valued Gaussian random variable, i.e., measurable function on the probability space $(\Omega,\mathcal A, \mathbb P)$.

\begin{theorem} \label{thm:2.gaussian}
\cite{nla.cat-vn2458246} Let $\xi$ be an $H$-valued Gaussian random variable. Then we can find $m\in H$, \copc{the} \emph{mean function}, and a symmetric, non-negative and trace-class linear operator $Q:H\to H$, \copc{the} \emph{covariance operator}, such that
\begin{subequations}
\begin{align}
    &\langle m, \zeta \rangle = \mathbb E \langle \xi, \zeta  \rangle, \qquad &\forall \zeta \in H, \\
    &\langle Q\zeta, \eta \rangle = \mathbb E \langle\xi - m,\zeta \rangle \langle \xi - m, \eta \rangle, &\forall \zeta,\eta \in H,
\end{align}
\end{subequations}
where $\mathbb E$ represents expectation. We write $\xi \sim \mathcal N(m,Q)$ and say $\xi$ is Gaussian with measure $\mathbb P \circ \xi^{-1}$.
\end{theorem}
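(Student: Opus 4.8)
The plan is to produce $m$ and $Q$ by hand from the scalar Gaussians $\langle \xi,\zeta\rangle$, and then to observe that the single nontrivial ingredient --- integrability of $\|\xi\|$ --- is exactly what makes $Q$ trace-class. I would begin with a measurability reduction: for each $\zeta\in H$ the map $\omega\mapsto\langle\xi(\omega),\zeta\rangle$ is measurable, so $\xi$ is weakly measurable; invoking Pettis's theorem (and, if needed, passing to the separable closed subspace $H_0\subseteq H$ on which $\xi$ concentrates) makes $\xi$ strongly Borel measurable, so that in particular $\omega\mapsto\|\xi(\omega)\|$ is a bona fide random variable. Any operator built on $H_0$ is then extended to $H$ by zero on $H_0^{\perp}$.

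The heart of the argument is the second-moment bound $\mathbb E\|\xi\|^2<\infty$. This I would obtain from Fernique's theorem: the pushforward $\mathbb P\circ\xi^{-1}$ is a Gaussian Radon measure on $H_0$, so there is $\lambda>0$ with $\mathbb E\,\exp(\lambda\|\xi\|^2)<\infty$, whence all moments $\mathbb E\|\xi\|^p$ are finite, and in particular $\mathbb E\|\xi\|<\infty$ and $\mathbb E\|\xi\|^2<\infty$. I expect \emph{this} to be the only genuine obstacle: it is easy to cite but is the sole point where a real estimate is required, and its proof (the rotation/symmetrization trick bounding the tail $\mathbb P(\|\xi\|>t)$ by a doubly-exponential) carries all the analytic weight; everything after it is soft functional analysis.

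With $L^2$-integrability in hand the construction is routine. Define $m:=\mathbb E\,\xi$ as a Bochner integral (legitimate since $\mathbb E\|\xi\|<\infty$); by the defining property of the Bochner integral $\langle m,\zeta\rangle=\mathbb E\langle\xi,\zeta\rangle$ for all $\zeta$, which is the mean identity. Passing to $\widetilde\xi:=\xi-m$, consider the bilinear form $B(\zeta,\eta):=\mathbb E\bigl[\langle\widetilde\xi,\zeta\rangle\langle\widetilde\xi,\eta\rangle\bigr]$; Cauchy--Schwarz gives $|B(\zeta,\eta)|\le\mathbb E\|\widetilde\xi\|^2\,\|\zeta\|\,\|\eta\|$, so $B$ is bounded, and the Riesz representation theorem for bounded bilinear forms yields a unique bounded $Q:H_0\to H_0$ with $\langle Q\zeta,\eta\rangle=B(\zeta,\eta)$. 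Symmetry of $B$ forces $Q=Q^{*}$, and $\langle Q\zeta,\zeta\rangle=\mathbb E\langle\widetilde\xi,\zeta\rangle^2\ge 0$ forces $Q\ge 0$. Finally, for an orthonormal basis $\{e_k\}$ of $H_0$, monotone convergence together with Parseval's identity gives
\[
 \sum_k\langle Qe_k,e_k\rangle=\sum_k\mathbb E\langle\widetilde\xi,e_k\rangle^2=\mathbb E\sum_k\langle\widetilde\xi,e_k\rangle^2=\mathbb E\|\widetilde\xi\|^2<\infty ,
\]
so $Q$ is trace-class. To close, $\langle\xi,\zeta\rangle$ is a scalar Gaussian with mean $\langle m,\zeta\rangle$ and variance $\langle Q\zeta,\zeta\rangle$ for every $\zeta$, which is precisely the assertion $\xi\sim\mathcal N(m,Q)$; and since characteristic functionals determine Borel measures on $H$, the law of $\xi$ is the stated $\mathbb P\circ\xi^{-1}$.
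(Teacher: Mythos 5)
The paper does not actually prove this statement---it quotes it from the cited reference---and your argument is exactly the standard proof given there: Pettis's theorem to upgrade weak to strong measurability, Fernique's theorem for $\mathbb E\|\xi\|^{2}<\infty$, the Bochner integral for $m$, the Riesz representation of the bounded symmetric form for $Q$, and Parseval plus monotone convergence for $\operatorname{tr}Q=\mathbb E\|\xi-m\|^{2}<\infty$. The proof is correct in the separable setting relevant to this paper, and you have rightly isolated Fernique's theorem as the only step carrying genuine analytic weight.
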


\begin{theorem} \label{thm:2.kl}
\cite{grimmett2001probability} Let $m\in H$ and $Q:H\to H$ be a symmetric, non-negative and trace-class linear covariance operator. Furthermore let $\{ e_i \}_{i\in \mathbb N}$ be eigenfunctions of $Q$ and $\{ \lambda_i \}_{i\in \mathbb N}$ be eigenvalues of $Q$ sorted in decreasing order. An $H$-valued random variable $\xi$ is a Gaussian random variable with the distribution $\mathcal N(m,Q)$ if and only if
\begin{equation}\label{eq:2.KL}
    \xi = m + \sum_{i\in \mathbb N} \sqrt{\lambda_i} \beta_i e_i,
\end{equation}
where $\{\beta_i\}_{i\in \mathbb N}$ is a sequence of i.i.d. real-valued random variables with distribution $\mathcal N(0,1)$. The expansion in \eqref{eq:2.KL} is referred to as the Karhunen-Lo\'eve (KL) expansion of $\xi$.
\end{theorem}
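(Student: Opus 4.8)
The plan is to prove both implications by passing to the coordinates of $\xi - m$ in the eigenbasis of the covariance operator, using the spectral theorem for the self-adjoint compact operator $Q$ together with the trace-class condition $\sum_i \lambda_i < \infty$ to control the convergence of the series in $H$. Throughout I would freely use \Cref{thm:2.gaussian} to translate between the random variable and its mean/covariance.

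For necessity, assume $\xi \sim \mathcal N(m,Q)$. First I would invoke the spectral theorem: since $Q$ is symmetric, non-negative and trace-class (hence compact), its normalized eigenfunctions $\{e_i\}$ with $Q e_i = \lambda_i e_i$, $\lambda_i \geq 0$, $\sum_i \lambda_i < \infty$, together with an orthonormal basis of $\ker Q$, form an orthonormal basis of $H$. For the indices with $\lambda_i > 0$ I set $\beta_i := \lambda_i^{-1/2}\langle \xi - m, e_i\rangle$; by the definition of an $H$-valued Gaussian and \Cref{thm:2.gaussian}, each $\langle \xi - m, e_i\rangle$ is a real Gaussian with mean $0$ and variance $\langle Q e_i, e_i\rangle = \lambda_i$, so $\beta_i \sim \mathcal N(0,1)$, and for $i\neq j$ one computes $\mathbb E[\beta_i\beta_j] = (\lambda_i\lambda_j)^{-1/2}\langle Q e_i, e_j\rangle = 0$. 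Because any finite linear combination $\sum_k c_k \langle \xi - m, e_{i_k}\rangle = \langle \xi - m, \sum_k c_k e_{i_k}\rangle$ is again a one-dimensional projection of $\xi - m$ and hence Gaussian, the family $\{\beta_i\}$ is jointly Gaussian; being pairwise uncorrelated, it is therefore i.i.d.\ $\mathcal N(0,1)$. It then remains to show $\xi - m = \sum_i \sqrt{\lambda_i}\,\beta_i e_i$ in $H$: the partial sums $S_N := \sum_{i=1}^N \langle \xi - m, e_i\rangle e_i$ satisfy $\mathbb E\|S_N - S_M\|^2 = \sum_{i=M+1}^N \lambda_i \to 0$, so $(S_N)$ converges in $L^2(\Omega;H)$; and since $\mathbb E\langle \xi - m, e\rangle^2 = \langle Q e, e\rangle = 0$ for every $e\in\ker Q$, the element $\xi - m$ is almost surely orthogonal to $\ker Q$, which forces its limit to equal $\sum_i \langle \xi - m, e_i\rangle e_i$.

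For sufficiency, suppose instead that $\xi = m + \sum_i \sqrt{\lambda_i}\,\beta_i e_i$ with $\{\beta_i\}$ i.i.d.\ $\mathcal N(0,1)$. I would first check that the series defines an $H$-valued random element: $\mathbb E\bigl\|\sum_{i=M}^N \sqrt{\lambda_i}\,\beta_i e_i\bigr\|^2 = \sum_{i=M}^N \lambda_i$ is a Cauchy tail by the trace-class assumption, so the partial sums converge in $L^2(\Omega;H)$ (almost sure convergence follows as well, e.g.\ from the It\^o--Nisio theorem or $H$-valued martingale convergence, but $L^2$ convergence is all that is needed). For any $\zeta\in H$, $\langle \xi, \zeta\rangle = \langle m,\zeta\rangle + \sum_i \sqrt{\lambda_i}\,\beta_i\langle e_i,\zeta\rangle$ is an $L^2(\Omega)$-limit of Gaussian random variables, hence Gaussian, so $\xi$ is an $H$-valued Gaussian variable. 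Finally I would compute its statistics: $\mathbb E\langle \xi,\zeta\rangle = \langle m,\zeta\rangle$ gives mean $m$, and $\mathbb E[\langle \xi - m,\zeta\rangle\langle \xi - m,\eta\rangle] = \sum_{i,j}\sqrt{\lambda_i\lambda_j}\,\mathbb E[\beta_i\beta_j]\langle e_i,\zeta\rangle\langle e_j,\eta\rangle = \sum_i \lambda_i\langle e_i,\zeta\rangle\langle e_i,\eta\rangle = \langle Q\zeta,\eta\rangle$, using $Q\zeta = \sum_i \lambda_i\langle\zeta,e_i\rangle e_i$; by \Cref{thm:2.gaussian} this identifies the law of $\xi$ as $\mathcal N(m,Q)$.

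The hard part will be the convergence of the random series in $H$, and this is precisely where the trace-class hypothesis is used: it is what makes $\sum_i \sqrt{\lambda_i}\,\beta_i e_i$ converge in $L^2(\Omega;H)$ (and, with a little more, almost surely), whereas without $\sum_i\lambda_i<\infty$ the limit would not live in $H$. The only other delicate point is the bookkeeping around $\ker Q$ in the necessity direction, which is needed to be sure the KL series reconstructs all of $\xi - m$ and not merely its projection onto $\overline{\mathrm{range}(Q)}$. The remaining ingredients --- the spectral representation of a trace-class self-adjoint operator, the fact that an $L^2$-limit of Gaussians is Gaussian, and that a jointly Gaussian uncorrelated family is independent --- are standard and I would simply cite them.
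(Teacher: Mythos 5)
The paper does not prove this theorem at all --- it is imported verbatim from \cite{grimmett2001probability} with a pointer to that reference and to \cite{nla.cat-vn2458246} for the convergence properties, so there is no in-paper argument to compare yours against. Your proof is the standard spectral-theorem argument and it is correct and complete: both directions are handled, the trace-class hypothesis is used exactly where it is needed (the $L^2(\Omega;H)$ Cauchy estimate $\sum_{i=M+1}^{N}\lambda_i\to 0$ for the partial sums), joint Gaussianity plus uncorrelatedness is correctly invoked to get independence of the $\beta_i$, and you even patch the one point the theorem statement glosses over, namely that when $\ker Q\neq\{0\}$ the eigenfunctions with $\lambda_i>0$ need not span $H$, so one must check $\xi-m\perp\ker Q$ almost surely before concluding that the KL series recovers all of $\xi-m$ rather than only its projection onto $\overline{\mathrm{range}(Q)}$. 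Nothing further is required.
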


\begin{remark}
In case all random variables are Gaussian random variables, we can construct a probability measure on $H$ using push-forward notation $\mu_0 = \xi_{\star}(\mathbb P)$. Therefore, $(H,\mathcal B(H),\mu_0) )$, with $\mathcal B(H) $ the Borel $\sigma$-algebra, forms a probability space. We refer to $\mu_0$ as the \emph{prior measure}.
\end{remark}

\Cref{thm:2.kl} indicates how we can construct random functions on a Hilbert space $H$ using a covariance operator $Q$. We can approximate the sum in \eqref{eq:2.KL} by truncating it after $N_{\text{KL}}$ terms. We refer the reader to \cite{nla.cat-vn2458246,grimmett2001probability} for convergence properties of the KL expansion.

In the following subsection we introduce a family of covariance operators that we can use to construct Gaussian random functions according to \eqref{eq:2.KL}.

\subsubsection{Mat\'ern-Whittle Covariance} \label{sec:2.matern}
A widely used family of covariance operators are the Mat\'ern-Whittle covariance operators \cite{nla.cat-vn2458246,rasmussen2003gaussian,Roininen2014,zhang2004inconsistent} which allows control over regularity, amplitude and correlation length of samples. Here we briefly introduce this covariance operator but we refer the reader to \Cref{sec:a.1} for a deeper discussion.

A simplified Mat\'ern-Whittle covariance operator \cite{Roininen2014} is given by
\begin{equation} \label{eq:2.cov_simple}
    Q_{\gamma,\tau} = (\tau^2 I- \Delta)^{-\gamma}.
\end{equation}
Here, \correct{$\Delta$ and $I$ are the Laplacian and the identity operators in 1 or 2 dimensions, respectively. Furthermore,} $\tau = 1/\ell >0$ controls the correlation length and $\gamma = \nu + 1$ is the smoothness parameter (see \cite{Lindgren2011} for more detail). For the covariance operator \eqref{eq:2.cov_simple} to be well defined we need to impose proper boundary conditions. See \cite{Roininen2014} for more detail on types of boundary conditions.

We note that in one dimension, i.e., when $D\subset \mathbb R$, and for $\tau=1$ the zero-mean Gaussian random variable $\xi$ distributed according to the covariance $Q_{\gamma,1}$ takes the form
\begin{equation} \label{eq:2.kl_1d}
    \xi(x) = c \sum_{i\in \mathbb N} \left( \frac{1}{k} \right)^\gamma \left( \xi_i^1 \sin(kx) + \xi_i^2 \cos(kx)  \right),
\end{equation}
for some constant $c>0$. Here, $\xi_i^1,\xi_i^2\sim \mathcal N(0,1)$ are real Gaussian random variables. This type of Gaussian random variables will be used in later sections to model the boundaries of inclusions.

\subsection{\correct{Prior as a Push-Forward Measure}} \label{sec:2.likelihood}
In this section we apply a nonlinear transformation on Gaussian random variables to \coyq{represent the image $\alpha$}. \coyq{Assume that $\alpha$ is piecewise constant, and we use the level set \cite{Dunlop2016} and the star-shaped \cite{1930-8337_2016_4_1007} fields to construct it.} Later, in \Cref{sec:2.likefunc}, we discuss how to use these fields to construct a likelihood function.
\subsubsection{Level Set Parameterization} \label{sec:2.levelset}
Let $\xi$ be an $H$-valued Gaussian random variable and take $c\in \mathbb R$. Define $D^{\text{ls}}_{i}\subset D$, for $i=1,2$ as
\begin{equation} \label{eq:2.level_set}
    D^{\text{ls}}_1 := \{ x\in D \mid \xi(x) < 0  \}, \qquad D^{\text{ls}}_2 := D \backslash D^{\text{ls}}_1.
\end{equation}
We define the level set mapping $F_{\text{ls}}:H \to L^p(D)$, with $2 \leq p < \infty$, as
\begin{equation} \label{eq:2.ls_prior}
    F_{\text{ls}}[\xi](x) = a^- \mathbbm{1}_{D^{\text{ls}}_1}(x) + a^+ \mathbbm{1}_{D^{\text{ls}}_2}(x),
\end{equation}
where $\mathbbm{1}_{D^{\text{ls}}_i}(x)$, for $i=1,2$, is the indicator function and $ 0 <a^- < a^+$. We define $D^0 = \overline D^{\text{ls}}_1 \cap \overline D^{\text{ls}}_2$ which contains the points of discontinuity. Throughout this paper we assume that $m(D^0) = 0$ where $m$ is the Lebesgue measure defined on $\overline D$. This assumption is to ensure that the boundary of the inclusions has indeed a lower dimensionality than the image $\alpha$. We refer the reader to \cite{iglesias2015bayesian} for more detail.
\begin{remark}
    It is shown in \cite{iglesias2015bayesian} that the assumption $m(D^0) = 0$ is sufficient for $F_{\text{ls}}$ to be continuous. This means if $\{ \xi^{\epsilon} \}_{\epsilon>0}$ is a sequence of functions such that for any $x\in D$
    \begin{equation}
        \xi(x) - \epsilon \leq \xi^{\epsilon} \leq \xi(x) + \epsilon,
    \end{equation}
    then $\| F_{\text{ls}}[\xi^{\epsilon}] - F_{\text{ls}}[\xi] \|_{L^p(D)} \to 0$, $\mu_0$-almost surely.
\end{remark}

\subsubsection{Star-Shaped Parameterization} \label{sec:2.star}
\correct{Let $\mathbf T = [0,2\pi)$ and $H \subset L^2\left( \mathbf T \right)$ contain period functions on $\mathbf T$.} Furthermore, define $\vartheta: D \to [0,2\pi)$ be the continuous map from Cartesian coordinates to the angular component of polar coordinates. Define \emph{star-shaped inclusions} $D_{i}\subset D$, for $i=1,\dots,N_{\text{inc}}$, as
\begin{equation}
    D_{i}(\xi_i,c_i) := \{ x\in D \mid \|x - c_i\|_2 < \coyq{F_{\text{lg}}} \left( \xi_i( \vartheta(x - c_i) ) \right) \},
\end{equation}
where
\begin{equation} \label{eq:2.log_gaussian}
   \coyq{F_{\text{lg}}[\xi]} = \exp(\xi)
\end{equation}
defines the \emph{log-Gaussian} field $\coyq{F_{\text{lg}}}:H\to L^2(D)$ in order to construct an image with positive attenuation. Here $\xi$ is an $H$-valued Gaussian random variable. Since $D$ is bounded, then \coyq{$F_{\text{lg}}$} is almost surely a continuous map. In addition, $\|\cdot \|_2$ is the Euclidean norm and $c_i\in D$ for $i=1,\dots,N_{\text{inc}}$ are independent stochastic centers of inclusions. Note that $c_i$ and $\xi_i$ are not necessarily i.i.d.
Let $D_0 := (D_1\cup \dots \cup D_{N_{\text{inc}}})^C$, \correct{where the superscript $C$ represents the complement of a set}. We define the star-shaped mapping $F_{\text{star}}:H^{N_{\text{inc}}}\times \mathbb R^{2N_{\text{inc} }} \to L^p(D)$ with $2 \leq p < \infty$ as
\begin{equation} \label{eq:2.star_shape}
    \coyq{F_{\text{star}}[ (\boldsymbol{\xi},\boldsymbol{c})](x)} =  a^{-} \mathbbm{1}_{D_0}(x) +  \sum_{i=1}^{N_{\text{inc}}} a^{+} \mathbbm{1}_{D_i}(x),
\end{equation}
where \coyq{$\boldsymbol{\xi}=[\xi_{1},\cdots,\xi_{N_{\text{inc}}}]^{T}$, $\boldsymbol{c}=[c_{1},\cdots,c_{N_{\text{inc}}}]^{T}$,} $a^-<a^+ \in \mathbb R^+$ and we refer to $a^+$ as the inclusion intensity. The following assumptions are considered when drawing samples of $\xi_i$ and $c_i$:
\begin{enumerate}[(I)]
    \item $D_i$, for $i=1,\dots, N_{\text{inc}}$ are disjoint.
    \item \coyq{$\| x_{1} - x_{2} \|_2 > d^{\partial D}_{\text{min}}$, for all $x_{1}\in D_1 \cup \dots\cup D_{N_{\text{inc}}}$ and $x_{2}\in \partial D$ the boundary of $D$. This insures all inclusions are away from the boundary of $D$.}
    \item \coyq{$\| x_{1} - x_{2} \|_2 > d^{D}_{\text{min}}$, for all $x_{1}\in D_i$ and $x_{2}\in D_j$ for $i\neq j$ and $i,j\neq 0$. This insures that the inclusions are well separated.}
\end{enumerate}
\begin{remark} \label{remark:2.star_shaped}
It is shown in \cite{1930-8337_2016_4_1007} that when $N_{\text{inc}}=1$ and \coyq{$\xi$} is Lipschitz continuous (e.g., $\gamma > 1$ in \eqref{eq:2.cov_simple}) then $F_{\text{star}}$ is a continuous mapping. This means if \coyq{$\{ \xi^{\epsilon} \}_{\epsilon>0}$ and $\{ c^{\epsilon} \}_{\epsilon>0}$ are sequences of $H$-valued random variables and sequence of points in $D$, respectively, such that $\|\xi - \xi^\epsilon \|_\infty \to 0$ and $\| c^{\epsilon} - c \|_2 \to 0$, then $F_{\text{star}}[ (\xi^\epsilon,c^\epsilon)] \to F_{\text{star}}[ (\xi,c) ]$ in measure.} When $N_{\text{inc}} >1$, assumptions (I)-(III) ensures that we can divide $D$ into subregions with only single inclusions.
\end{remark}

\begin{remark}
The background attenuation $a^{-} \mathbbm{1}_{D_0}$ in \eqref{eq:2.star_shape} is constant. However, in many applications $a^-$ varies smoothly in the domain. To account this variation, we modify \eqref{eq:2.star_shape} to
\begin{equation} \label{2.eq.noisy_star}
   \coyq{F^{\text{noisy}}_{\text{star}}[\xi_0,(\boldsymbol{\xi},\boldsymbol{c})](x) = F_{\text{lg}}[\xi_0](x) \mathbbm{1}_{D_1}(x) + \sum_{i=1}^{N_{\text{inc}}} a^{+} \mathbbm{1}_{D_i}(x).}
\end{equation}
We distinguish between the noise term $\varepsilon$ in \eqref{equation:1.forward_matrix} and the variation in the background \coyq{$F_{\text{lg}}[\xi_0](x)$} in our model. The latter can accounts for experiment's systematic error while the former noise models the measurement error.
\end{remark}

\coyq{We need to specify a prior measure $\boldsymbol{\mu}_0$ for $(\boldsymbol{\xi},\boldsymbol{c})$. We define $\boldsymbol{\mu}_0 = \boldsymbol{\mu}^1_0 \otimes \boldsymbol{\mu}^2_0 $, where $\boldsymbol{\mu}^1_0$ is a Gaussian measure on $H^{N_{\text{inc}}}$ and $\boldsymbol{\mu}^2_0$ is a measure on $R^{2N_{\text{inc}}}$ (e.g., Lebesgue measure). We assume that $\xi_i$ and $c_i$ for $i=1,\dots,N_{\text{inc}}$ are independent, then $\boldsymbol{\mu}^1_0$ and $\boldsymbol{\mu}^2_0$ can be factorized further into simpler measures as $\boldsymbol{\mu}^1_0 = \mu^{1,1}_0 \otimes\cdots\otimes \mu^{1,N_{\text{inc}}}_0$ and $\boldsymbol{\mu}^2_0 = \mu^{2,1}_0 \otimes\cdots\otimes \mu^{2,N_{\text{inc}}}_0$. For each $i=1,\cdots,N_{inc}$ the random variable $(\xi_i,c_i)$ is in the separable Hilbert space $X^{i}:=H^{i}\times\mathbb{R}^{2}$ equipped with the norm $ \| (\xi_i,c_i) \|_{X^{i}} := \| \xi_i \|_{H^{i}} + \| c_i \|_2$, and accordingly we define $(\boldsymbol{\xi},\boldsymbol{c})\in X$. Note that the superscript $i$ in $H^i$ indicates the index of the space corresponding to the random variable $\xi_{i}$. This should not be confused with the differentiability order of the Hilbert space.}

\section{\coyq{The posterior distribution for the CT problem}}  \label{sec:2.likefunc}
\coyq{In this section we derive a novel posterior distribution for the CT problem. Before doing so, we first construct the likelihood function based on the prior models introduced in \Cref{sec:2}. Then, we show that this likelihood function fits in the Lipschitz-Hellinger well-posedness framework in \cite{Dashti2017,stuart_2010}. Therefore, the posterior measure exists and is unique.}

\subsection{The Likelihood Function}
\coyq{Define the probability space $(X,\mathcal B(X),\boldsymbol{\mu}_0)$} 
\coyq{and the image $\alpha(\xi)=F_{\text{ls}}[\xi]$ or $\alpha(\boldsymbol{\xi},\boldsymbol{c}):=F_{\text{star}}[(\boldsymbol{\xi},\boldsymbol{c})]$. In this section we only construct the likelihood and the posterior based on the star-shaped field. The formulas regarding the level set field follows immediately by setting $c$ as a constant in the single inclusion case. We refer the reader to \cite{1930-8337_2016_4_1007} for a detailed discussion.}

 Recall that the noise \coyq{$\boldsymbol{\varepsilon}$} is distributed according to $\coyq{\boldsymbol{\varepsilon}} \sim \mathcal N(\mathbf 0, \Sigma)$. \correct{We assume that $\boldsymbol{\varepsilon}$ is independent of $(\boldsymbol{\xi},\boldsymbol{c})$}. 
 \coyq{With the star-shape field} the \emph{negative log-likelihood} function \coyq{$\Phi:X\to \mathbb R$} is formulated as the least squares distance
\begin{equation} \label{eq:2.loglike}
    \coyq{\Phi\left((\boldsymbol{\xi},\boldsymbol{c}); \boldsymbol{y}\right) = \frac 1 2 \| \boldsymbol{y} - \mathcal G(\boldsymbol{\xi},\boldsymbol{c}) \|_{\Sigma}^2,}
\end{equation}
where $\mathcal G = G\circ F_{\text{star}}$, and $\|\cdot\|_{\Sigma} = \|\Sigma^{-1/2} \cdot \|_2$. \coyq{For the CT problem with a single inclusion, it satisfies the following conditions.} \correct{We refer the reader to \Cref{sec:a.2} for the proof.}

\coyq{
\begin{proposition} \label{prop:continuous}
Let $(X,\mathcal B(X), \mu_0)$ be the probability space defined above. The negative log-likelihood $\Phi$ defined in \eqref{eq:2.loglike} with a single inclusion, i.e. $N_{\text{inc}}=1$, satisfies the following conditions:
\begin{enumerate}[(i)]
\item There is a continuous function $K:\mathbb R^+\times \mathbb R^+ \to \mathbb R^+$ such that for every $\rho >0$, $(\xi_1,c_1) \in X$, and bounded observation vector $\boldsymbol{y}$ with $\| \boldsymbol{y} \|_{\Sigma}\leq\rho$,
\begin{equation}
    0\leq \Phi((\xi_1,c_1), \boldsymbol{y}) \leq K(\rho, \| (\xi_1,c_1) \|_X ).
\end{equation}
\item For a fixed observation vector $\boldsymbol{y}\in \mathbb R^N$, $\Phi(\cdot; \boldsymbol{y}): X \to \mathbb R$ is continuous $\mu_0$-a.s. on the probability space $(X,\mathcal B(X), \mu_0)$.
\item There exists a continuous map $M:\mathbb R^+ \times \mathbb R^+ \to \mathbb R^+ $ such that for every pair of observation vectors $\boldsymbol{y}_1, \boldsymbol{y}_2\in \mathbb R^N$ with $\| \boldsymbol{y}_1 \|_{\Sigma},\|\boldsymbol{y}_2 \|_{\Sigma} \leq \rho $, and every $(\xi_1,c_1)\in X$,
\begin{equation}
    \left| \Phi((\xi_1,c_1),\boldsymbol{y}_1) - \Phi((\xi_1,c_1), \boldsymbol{y}_2) \right| \leq M(\|(\xi_1,c_1)\|_X,\rho)\| \boldsymbol{y}_1 - \boldsymbol{y}_2 \|_{\Sigma}.
\end{equation}
\end{enumerate}
\end{proposition}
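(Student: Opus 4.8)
The plan is to verify conditions (i)--(iii) directly from the definition \eqref{eq:2.loglike} of $\Phi$, exploiting two structural facts. First, the forward map $G$ is linear and (locally, hence globally) Lipschitz on $L^2(D)$, so it is bounded with some norm $\|G\|$. Second, since $0 < a^- < a^+$, every element of the range of $F_{\text{star}}$ satisfies $\|F_{\text{star}}[(\xi_1,c_1)]\|_{L^\infty(D)} \le a^+$, hence $\|F_{\text{star}}[(\xi_1,c_1)]\|_{L^2(D)} \le a^+ m(D)^{1/2}$ \emph{uniformly} in $(\xi_1,c_1)\in X$. Combining these, $\mathcal G(\xi_1,c_1) = G F_{\text{star}}[(\xi_1,c_1)]$ obeys a uniform bound $\|\mathcal G(\xi_1,c_1)\|_\Sigma \le C_0$, with $C_0$ depending only on $\|G\|$, $a^+$, $m(D)$ and $\sigma_{\mathrm{noise}}$. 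This uniform bound is what makes (i) and (iii) short.

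For (i), nonnegativity of $\Phi$ is immediate from \eqref{eq:2.loglike}, and the elementary inequality $\frac{1}{2}\|a-b\|_\Sigma^2 \le \|a\|_\Sigma^2 + \|b\|_\Sigma^2$ with $a=\boldsymbol{y}$, $b=\mathcal G(\xi_1,c_1)$ gives $\Phi((\xi_1,c_1),\boldsymbol{y}) \le \|\boldsymbol{y}\|_\Sigma^2 + C_0^2 \le \rho^2 + C_0^2$, so one may take $K(\rho,r):=\rho^2+C_0^2$ (which, because $F_{\text{star}}$ is uniformly bounded, need not depend on $r=\|(\xi_1,c_1)\|_X$). For (iii), the identity $\bigl|\,\|a\|_\Sigma^2 - \|b\|_\Sigma^2\,\bigr| \le \|a-b\|_\Sigma\bigl(\|a\|_\Sigma + \|b\|_\Sigma\bigr)$ applied with $a=\boldsymbol{y}_1-\mathcal G(\xi_1,c_1)$ and $b=\boldsymbol{y}_2-\mathcal G(\xi_1,c_1)$, so that $a-b=\boldsymbol{y}_1-\boldsymbol{y}_2$ and $\|a\|_\Sigma+\|b\|_\Sigma \le 2\rho + 2C_0$, yields $\bigl|\Phi((\xi_1,c_1),\boldsymbol{y}_1) - \Phi((\xi_1,c_1),\boldsymbol{y}_2)\bigr| \le (\rho+C_0)\|\boldsymbol{y}_1-\boldsymbol{y}_2\|_\Sigma$; thus $M(r,\rho):=\rho+C_0$ works.

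Part (ii) is the substantive one, and I would prove it by sequential continuity on a full-measure subset of $X$. Let $(\xi_1^\epsilon,c_1^\epsilon)\to(\xi_1,c_1)$ in $X=H\times\mathbb R^2$. The Hilbert space $H$ built from the Whittle--Mat\'ern operator \eqref{eq:2.cov_simple} with $\gamma>1$ embeds continuously into $C(\mathbf T)$, and $\mu_0$-almost every $\xi_1$ has the regularity required in \Cref{remark:2.star_shaped}; restricting to that full-measure set, $\|\xi_1^\epsilon-\xi_1\|_\infty\to 0$ and $\|c_1^\epsilon-c_1\|_2\to 0$. By \Cref{remark:2.star_shaped} (the single-inclusion continuity result of \cite{1930-8337_2016_4_1007}), $F_{\text{star}}[(\xi_1^\epsilon,c_1^\epsilon)]\to F_{\text{star}}[(\xi_1,c_1)]$ in measure on $(D,m)$; since $m(D)<\infty$ and all these functions are bounded by $a^+$, the bounded convergence theorem upgrades this to convergence in $L^2(D)$. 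Boundedness of $G$ then gives $\mathcal G(\xi_1^\epsilon,c_1^\epsilon)\to\mathcal G(\xi_1,c_1)$ in $\mathbb R^N$, and continuity of $z\mapsto\frac{1}{2}\|\boldsymbol{y}-z\|_\Sigma^2$ on $\mathbb R^N$ completes the argument, showing $\Phi(\cdot;\boldsymbol{y})$ is continuous at every point of a set of full $\mu_0$-measure.

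I expect the only real obstacle to be the bookkeeping hidden in (ii): pinning down precisely the full-measure subset of $X$ on which \Cref{remark:2.star_shaped} applies --- i.e., verifying that Whittle--Mat\'ern samples are $\mu_0$-a.s.\ regular enough and that the induced star-shaped inclusion $D_1(\xi_1,c_1)$ is $\mu_0$-a.s.\ non-degenerate --- together with the (routine) passage from convergence in measure to $L^2(D)$-convergence. Both are handled by combining the cited result \cite{1930-8337_2016_4_1007} with the uniform pointwise bound $a^+$ on $F_{\text{star}}$ and the finiteness of $m(D)$; everything else reduces to the one-line estimates in the preceding paragraph.
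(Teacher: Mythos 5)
Your proposal is correct and follows essentially the same route as the paper's proof in Appendix~\ref{sec:a.2}: parts (i) and (iii) use the same uniform $L^\infty$ bound $a^+$ on the range of $F_{\text{star}}$ together with boundedness of the Radon transform, and part (ii) rests on the single-inclusion continuity result of \cite{1930-8337_2016_4_1007} (\Cref{B1}) composed with $G$. If anything, your treatment of (ii) is more careful than the paper's, which simply invokes ``composition of continuous maps'' without noting that $F_{\text{star}}$ is continuous only into the topology of convergence in measure and that one must pass to $L^2(D)$ convergence (via the uniform bound $a^+$ and $m(D)<\infty$, as you do with bounded convergence) before applying the continuity of $G$.
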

}


\subsection{Posterior Distribution} \label{sec:2.posterior}
In this section we present \delete{the} Bayes' theorem to connect the prior measure with the likelihood function and construct the posterior measure \coyq{$\boldsymbol{\mu}^{\boldsymbol{y}}$.}
\begin{theorem} \cite{Dashti2017}\label{thm42}
\coyq{Let $\Phi$ be the negative log-likelihood defined in \eqref{eq:2.loglike} satisfying \Cref{prop:continuous} and $\boldsymbol{\mu}_0$ be the prior measure defined on $(X^{N_{\text{inc}}},\mathcal B(X^{N_{\text{inc}}}))$. Then there is a posterior measure $\boldsymbol{\mu}^{\boldsymbol{y}}$ absolutely continuous with respect to $\boldsymbol{\mu}_0$, i.e., $\boldsymbol{\mu}^{\boldsymbol{y}} \ll \boldsymbol{\mu}_0$, and \delete{is} defined through the Radon-Nikodym derivative \cite{nla.cat-vn2458246}
\begin{equation} \label{eq:2.posterior}
    \frac{d \boldsymbol{\mu}^{\boldsymbol{y}}}{d\boldsymbol{\mu}^0} = \frac{1}{Z} \exp \left( -\Phi((\boldsymbol{\xi},\boldsymbol{c});\boldsymbol{y}) \right),
\end{equation}
where $Z$ is the normalization constant and for $\boldsymbol{y}$-almost surely
\begin{equation}
    Z:= \int_X \exp \left( -\Phi((\boldsymbol{\xi},\boldsymbol{c});\boldsymbol{y}) \right) \ \boldsymbol{\mu}_0(d(\boldsymbol{\xi},\boldsymbol{c})) >0.
\end{equation}}
\end{theorem}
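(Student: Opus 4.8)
The plan is to realize the posterior as a change of measure from the prior and to check that conditions (i)--(iii) recorded in \Cref{prop:continuous} are exactly what makes the Radon--Nikodym derivative in \eqref{eq:2.posterior} well defined. I fix $N_{\text{inc}}=1$, so $X=H\times\mathbb R^{2}$, and I abbreviate the map $(\xi_1,c_1)\mapsto\Phi((\xi_1,c_1);\boldsymbol{y})$ by $\Phi(\cdot;\boldsymbol{y})$. Since any realization of the data is a finite vector $\boldsymbol{y}\in\mathbb R^{N}$, there is $\rho>0$ with $\|\boldsymbol{y}\|_{\Sigma}\le\rho$, a fact I will use to invoke the $\rho$-dependent bounds from \Cref{prop:continuous}.

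First I would settle finiteness and measurability. The lower bound $0\le\Phi$ from condition (i) gives $0\le\exp(-\Phi(\cdot;\boldsymbol{y}))\le1$, so integrating against the probability measure $\boldsymbol{\mu}_0$ yields $Z\le1<\infty$. Measurability of the integrand comes from condition (ii): as $\Phi(\cdot;\boldsymbol{y})$ is $\boldsymbol{\mu}_0$-a.s.\ continuous, it is $\mathcal B(X)$-measurable up to a $\boldsymbol{\mu}_0$-null set, and hence so is $\exp(-\Phi(\cdot;\boldsymbol{y}))$; thus the integral defining $Z$ is meaningful.

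The substantive point is the strict positivity $Z>0$, and this is where I expect the main obstacle. The strategy is to exhibit a set of positive prior mass on which the integrand stays bounded below. By the upper bound in condition (i), for any radius $r>0$ and every $(\xi_1,c_1)$ in the ball $B_r:=\{(\xi_1,c_1)\in X:\|(\xi_1,c_1)\|_{X}\le r\}$ we have $\Phi((\xi_1,c_1);\boldsymbol{y})\le K(\rho,r)<\infty$, so $\exp(-\Phi(\cdot;\boldsymbol{y}))\ge\exp(-K(\rho,r))>0$ on $B_r$. What remains, and what is the crux, is to verify $\boldsymbol{\mu}_0(B_r)>0$ for $r$ large; this uses the product structure $\boldsymbol{\mu}_0=\boldsymbol{\mu}_0^{1}\otimes\boldsymbol{\mu}_0^{2}$ together with the fact that the Gaussian factor $\boldsymbol{\mu}_0^{1}$ assigns positive probability to every open ball of $H$ (its support is the closure of the Cameron--Martin space of the Mat\'ern--Whittle operator \eqref{eq:2.cov_simple}, which is dense in $H$), while the center factor $\boldsymbol{\mu}_0^{2}$ charges balls in its bounded admissible range positively. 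Combining these,
\[
    Z\;\ge\;\int_{B_r}\exp(-\Phi(\cdot;\boldsymbol{y}))\,d\boldsymbol{\mu}_0\;\ge\;\exp(-K(\rho,r))\,\boldsymbol{\mu}_0(B_r)\;>\;0 .
\]
Because every realization of $\boldsymbol{y}$ is finite, the bound holds for $\boldsymbol{y}$-almost every data vector, as claimed.

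With $0<Z<\infty$ in hand, I would define the candidate posterior by
\[
    \boldsymbol{\mu}^{\boldsymbol{y}}(A)\;:=\;\frac{1}{Z}\int_A\exp\!\left(-\Phi((\boldsymbol{\xi},\boldsymbol{c});\boldsymbol{y})\right)\boldsymbol{\mu}_0(d(\boldsymbol{\xi},\boldsymbol{c})),\qquad A\in\mathcal B(X).
\]
Nonnegativity and countable additivity of $\boldsymbol{\mu}^{\boldsymbol{y}}$ are inherited from the integral, and the normalization by $Z$ gives $\boldsymbol{\mu}^{\boldsymbol{y}}(X)=1$, so $\boldsymbol{\mu}^{\boldsymbol{y}}$ is a probability measure. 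By construction every $\boldsymbol{\mu}_0$-null set is $\boldsymbol{\mu}^{\boldsymbol{y}}$-null, which is precisely $\boldsymbol{\mu}^{\boldsymbol{y}}\ll\boldsymbol{\mu}_0$, and the nonnegative density $\tfrac{1}{Z}\exp(-\Phi(\cdot;\boldsymbol{y}))$ is the Radon--Nikodym derivative \eqref{eq:2.posterior}; uniqueness of that derivative up to $\boldsymbol{\mu}_0$-null sets pins down $\boldsymbol{\mu}^{\boldsymbol{y}}$ uniquely. Condition (iii), not needed for existence, is what I would invoke afterward to upgrade this to Lipschitz stability of $\boldsymbol{\mu}^{\boldsymbol{y}}$ in the Hellinger metric with respect to $\boldsymbol{y}$.
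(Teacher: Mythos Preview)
Your proof is correct and follows essentially the same route as the paper's argument in \Cref{sec:a.2}: both reduce existence to the strict positivity of $Z$, obtain a uniform lower bound on $\exp(-\Phi)$ over a ball via condition (i) of \Cref{prop:continuous}, and then use the product structure $\boldsymbol{\mu}_0=\boldsymbol{\mu}_0^1\otimes\boldsymbol{\mu}_0^2$ together with the fact that a Gaussian measure charges open balls in $H$ positively to conclude $\boldsymbol{\mu}_0(B)>0$. The only cosmetic differences are that the paper takes the product set $B=B_H\times D$ rather than your norm ball $B_r$, and exploits that in this specific model $K$ actually depends only on $\rho$ (as shown in the proof of \Cref{prop:continuous}), whereas you correctly work with the general two-argument form $K(\rho,r)$ stated in the proposition.
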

\correct{We show in \Cref{sec:a.2} that the posterior measure \coyq{$\boldsymbol{\mu}^{\boldsymbol{y}}$ constructed with the star-shaped prior in a single inclusion case} is well-posed. Existence and well-posedness of the posterior measure constructed with level-set prior \coyq{together with} a linear forward map (e.g., the Radon transform $G$) is discussed in \cite{Dunlop2016,stuart_2010}.} \coyq{In the next section we utlize both priors to develop a two-stage method for identifying the location and boundaries of inclusions in an image $\alpha$.}

\section{Two-Stage Method for Detecting Inclusion Boundaries} \label{sec:3}
\coyq{In \Cref{sec:2} we \copc{presented} two prior models that are used in constructing a posterior distribution. \Cref{sec:2.posterior} shows the Bayesian formula for modelling the CT problem.} In this section we provide a two-stage method in detecting the location of inclusions in an image and also estimate the boundary of the inclusions. \coyq{In the first stage we use the level set prior \eqref{eq:2.ls_prior} to construct a posterior and use the mean to obtain the approximated centers of inclusions. In the second stage we use the star-shape prior to estimate the boundaries of the inclusions.}

We remind the reader that the star-shaped prior for multiple inclusions with \coyq{unknown centers} can be used to construct a posterior. However, constructing a sampling method for such a posterior is challenging (e.g., the Metropolis-within-Gibbs-type method can result in highly correlated samples and increased computational cost, see \cite{SAIBABA2021110391}). The first stage of the method is to ensure that we can decompose the images into regions containing a single inclusion. \Cref{remark:2.star_shaped} then guarantees that we can converge to the right solution for each individual inclusion. This decomposition also allows parallel computation with respect to the number of inclusions for exploring the posterior.

\subsection{Stage 1: Estimating Centers of Inclusions} \label{sec:3.stage1}
In this section we construct the posterior measure \coyq{$\mu^{\boldsymbol{y}}$} in \eqref{eq:2.posterior} using the Mat\'ern covariance \eqref{eq:2.cov_simple} and the level set map $F_{\text{ls}}$ introduced in \eqref{eq:2.ls_prior}.

We assume that the correlation-length parameter $\tau$ and the regularity parameter $\gamma$ \coyq{are} known. Once the covariance $Q_{\gamma,\tau}$ is constructed we discretize \eqref{eq:2.cov_simple} using a finite differences (FD) discretization scheme with a pixel size of $2/N_s$, where $N_s$ is the number of detector pixels. \correct{We truncate the KL expansion after $N_{\text{KL}}$ terms. We propose two \coyq{approaches} to draw samples from a truncated KL expansion.}

In the first approach we consider periodic boundary conditions on the box centered at the origin with the size 2-by-2. Note that $Q_{\gamma,\tau}$ is defined via its Fourier transform. Therefore, this choice of boundary conditions allows efficient sampling from $Q_{\gamma,\tau}$ using the fast Fourier transform. \correct{For a sample $\xi \sim Q_{\gamma,\tau}$ we first compute the Fourier transform $\widehat \xi$ of $\xi$, see \Cref{sec:a.1}.} We then use the inverse fast Fourier transform (IFFT) to obtain $\xi_{\text{ex}}$. To Ensure that $\xi$ is only valid in $D$ we define $\xi := \xi_{\text{ex}}|_{D}$. The image $\alpha$ is then constructed using the map $F_{\text{ls}}[\xi]$ for fixed $a^+,a^-\geq 0$. This method allows efficient generation of Gaussian samples.

\correct{An alternative sampling approach is to discretize $Q_{\gamma,\tau}$, e.g. using FD methods, to obtain a covariance matrix. We can construct efficient sampling methods by computing the Cholesky factor or the principle square root \cite{banerjee2014linear} of this covariance matrix. Note that this computation is carried only once for a set of parameters $\tau$ and $\gamma$. The image $\alpha$ is then construct by directly computing the $KL$ expansion in \eqref{eq:2.KL} and using the map $F_{\text{ls}}[\xi]$ for fixed $a^+,a^-\geq 0$. In our numerical experiments we only used the first approach.}


We construct the posterior measure \coyq{$\mu^{\boldsymbol{y}}$} and explore it by using the preconditioned Crank-Nicolson (pCN) method \cite{Cotter2013}, which is introduced in \Cref{sec:2.mcmc} in detail. The algorithm of Stage 1 is given in \Cref{alg:3.stage_1}.

\begin{algorithm}[th]
\caption{Detecting inclusion centers}
\label{alg:3.stage_1}
\begin{algorithmic}[1]
\STATE{Construct the posterior measure  \coyq{$\mu^{\boldsymbol{y}}$} using the level set prior for an observation vector \coyq{$\boldsymbol{y}$}.}
\STATE{Draw samples $\{\xi^{(j)} \}_{j=1}^{N_{\text{sample}}}$ by using pCN with \Cref{alg:2.pCN}.}
\STATE Compute $\overline \alpha = F_{\text{ls}} [\mathbb E \xi]$.
\STATE \coyq{Estimate $N_{\text{inc}}$, $\boldsymbol{c}$ and bounding boxes $\square^{i}$ with $i=1,\cdots,N_{\mathrm{inc}}$ from $\overline \alpha$.}
\end{algorithmic}
\end{algorithm}

To show the performance of our method in Stage 1, we consider two types of mean for the density field suggested in \cite{1930-8337_2016_4_1007}. The first mean is computed on the space $H$ and then push-forward to the density space using the map $F_{\text{ls}}$, i.e. $\overline \alpha = F_{\text{ls}}[ \mathbb E \xi ]$. This mean conserves the piecewise constant nature of the density field. The other type is the sample mean in the density field, i.e., $\widehat \alpha = \mathbb E F_{\text{ls}}[\xi]$. This mean does not construct a piece-wise constant nature of the density field but provides an \coyq{uncertainty estimate on the boundaries} of the inclusions.

We use $\overline \alpha = F_{\text{ls}}[ \mathbb E \xi ]$ to estimate the approximate location of the inclusions. We can use standard matrix/image segmentation methods (e.g., \cite{Weaver1985}) to identify individual inclusions. This also gives us an estimate of the number of existing inclusions. The center of mass for each inclusion is an approximation of the center for the star-shape inclusion.

\begin{remark}
The center defined in the star-shaped inclusions is, in general, not the center of mass. However, the center is \copc{just} a modelling tool to describe inclusions and
\copc{it is not explicitly needed} in most applications.
\end{remark}

\begin{figure}[tbhp]
    \centering
    \subfloat[True density $\alpha(\boldsymbol{\xi})$]{\label{fig:3.stage1.a}\includegraphics[width=0.3\textwidth]{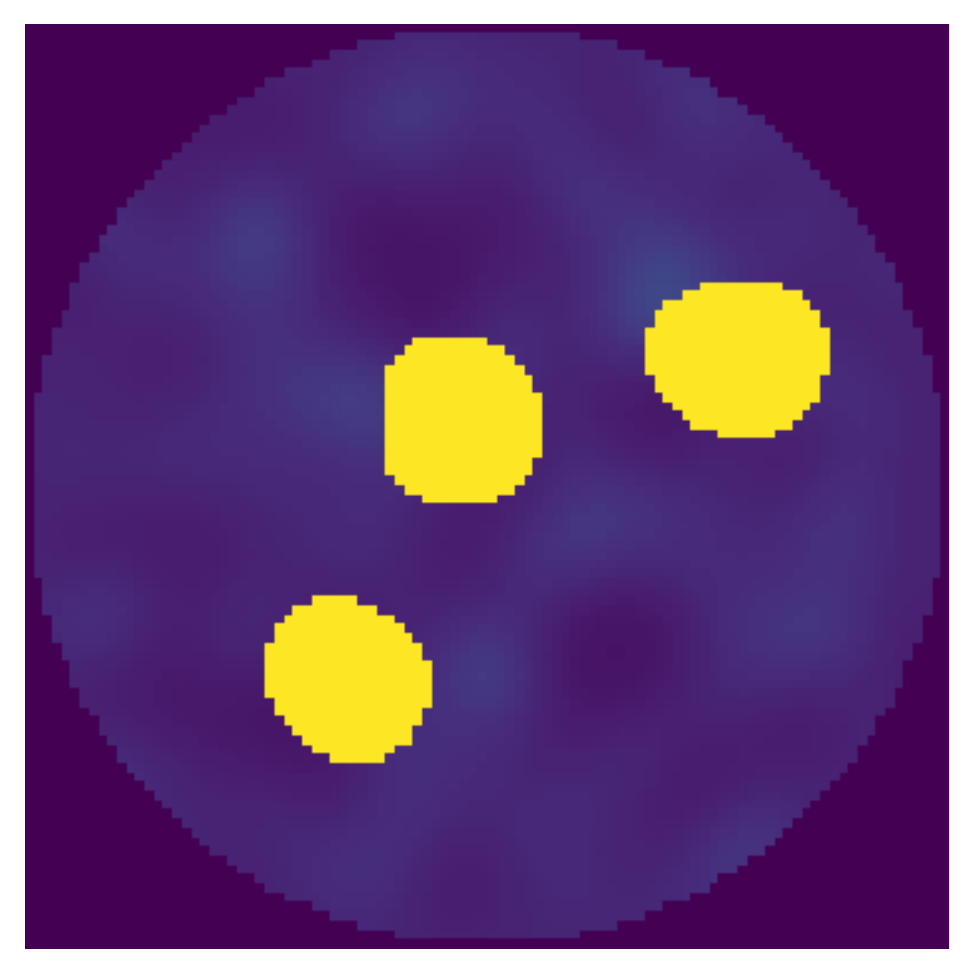}}
    \subfloat[$F_{\text{ls}}(\mathbb E \boldsymbol{\xi})$]{\label{fig:3.stage1.b}\includegraphics[width=0.3\textwidth]{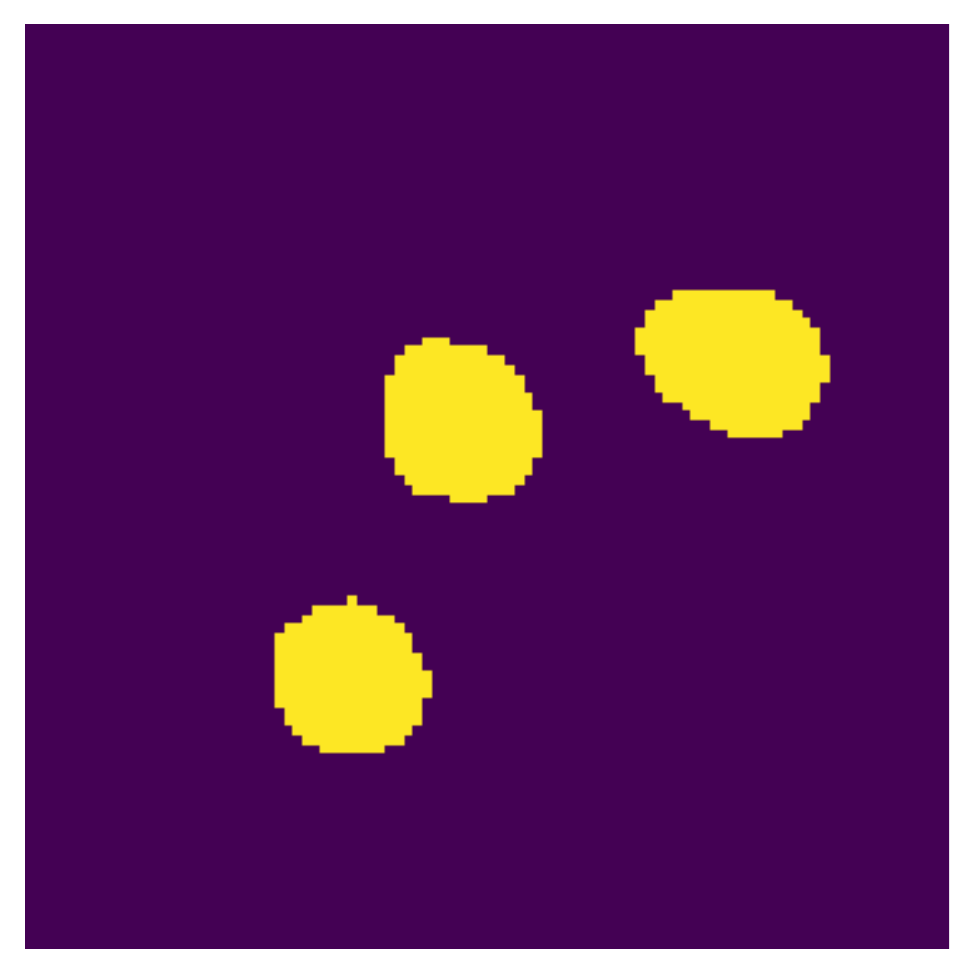}}
    \subfloat[$\mathbb E F_{\text{ls}}(\boldsymbol{\xi})$]{\label{fig:3.stage1.c}\includegraphics[width=0.3\textwidth]{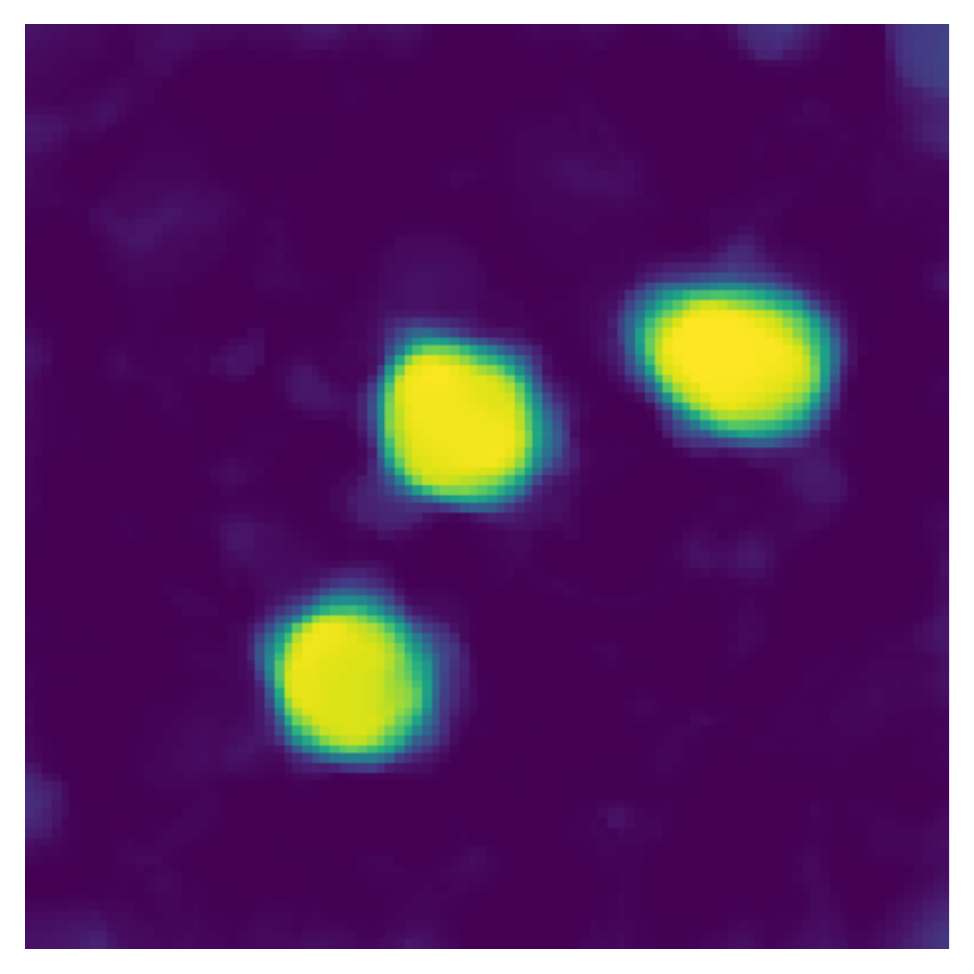}}
    \\
    \subfloat[True density $\alpha(\boldsymbol{\xi})$]{\label{fig:3.stage1.d}\includegraphics[width=0.3\textwidth]{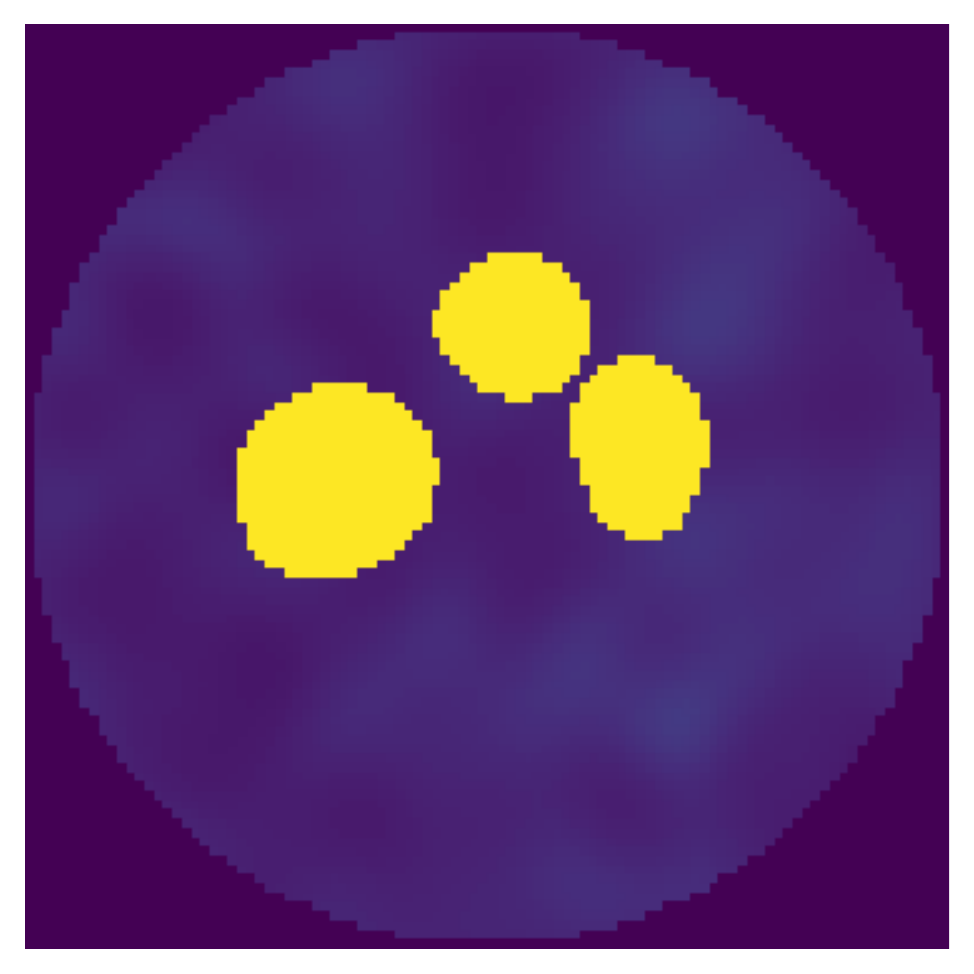}}
    \subfloat[$F_{\text{ls}}(\mathbb E \boldsymbol{\xi})$]{\label{fig:3.stage1.e}\includegraphics[width=0.3\textwidth]{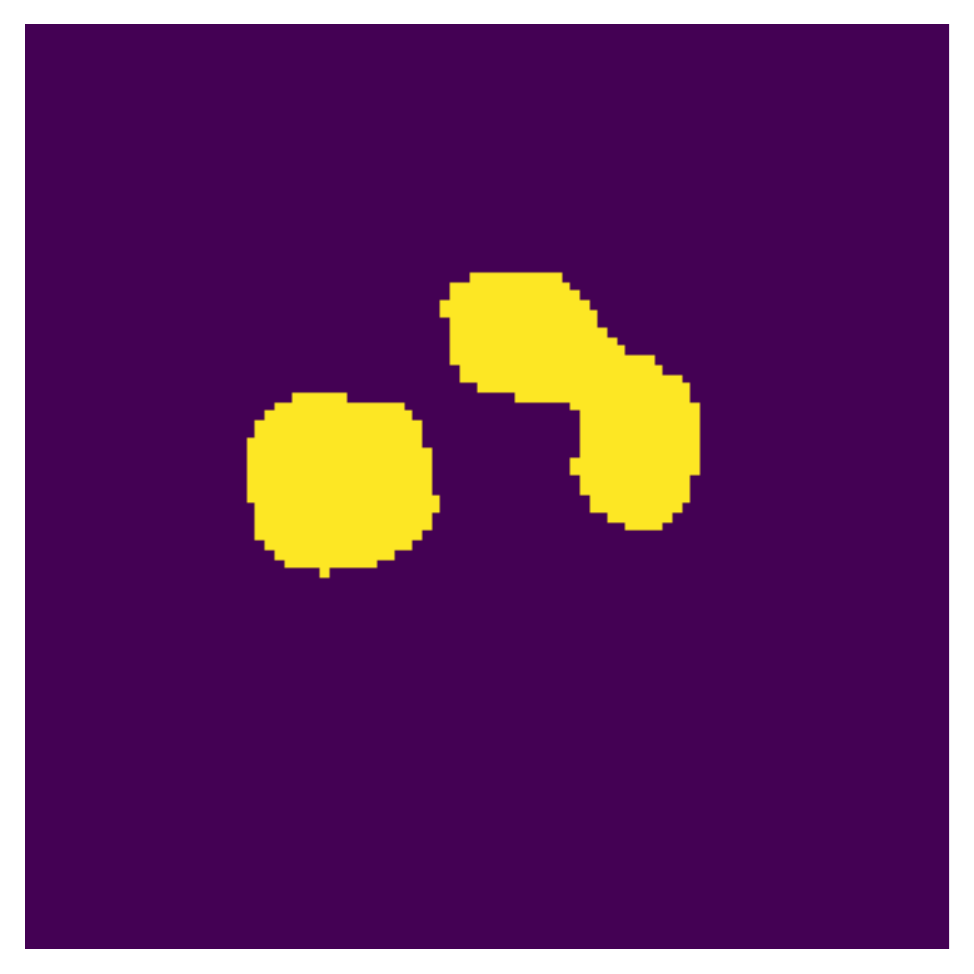}}
    \subfloat[$\mathbb E F_{\text{ls}}(\boldsymbol{\xi})$]{\label{fig:3.stage1.f}\includegraphics[width=0.3\textwidth]{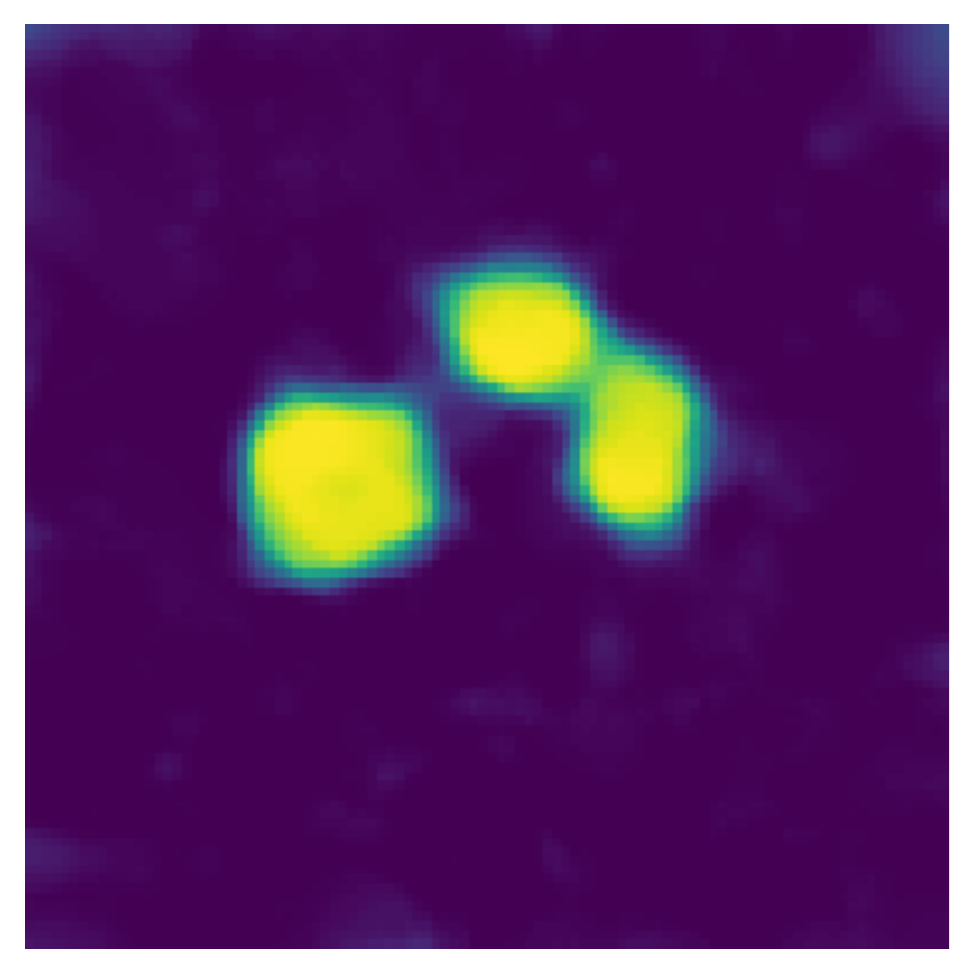} }
    \caption{Example of \coyq{Stage 1} in identifying the location of the inclusions. Each row corresponds to the same experiment. }
\label{fig:3.stage1}
\end{figure}

\coyq{In \Cref{fig:3.stage1} we show an example of Stage 1.} The true densities are generated randomly using the noisy star-shaped prior \eqref{2.eq.noisy_star}. \coyq{We consider a parallel-beam geometry described in \Cref{sec:1} with 100 equidistantly spaced angles in $[0^{\circ}, 180^{\circ})$, i.e. $\theta_\text{max} = 180^\circ$ and $N_{\theta} = 100$. The \copc{detector} length is set to 2 and centered at the origin, thus, the size of a detector pixel is $2/N_s$ with $N_{s} = 100$. The number of inclusions is unknown.}

\coyq{\Cref{tab:3.centers} lists the estimated centers of mass in Stage 1 \copc{compared} with the star-shaped centers used to generate \Cref{fig:3.stage1.a,fig:3.stage1.d}. Note that the estimated centers of mass are far from the centers assigned to the star-shaped inclusions, but they are accurate estimates of the true centers of mass.} We report that the method consistently had similar results for other examples tested by the authors.

\begin{table}[tbhp]
{\footnotesize
  \caption{Prediction of centers}\label{tab:3.centers}
\begin{center}
\setlength{\tabcolsep}{10pt} 
\renewcommand{\arraystretch}{1.5}
\begin{tabular}{|c|c|c|c|} \hline
 & \bf inclusion 1 & \bf inclusion 2 & \bf inclusion 3 \\ \hline
star-shaped centers & $(-0.298, 0.452)$  & $(-0.157, -0.143)$  & $(0.382, -0.252)$ \\ \hline
exact centers of mass & $(-0.279, 0.548)$ & $(-0.139, -0.058)$ & $(0.424, -0.305)$ \\ \hline
estimated centers of mass & $(-0.272, 0.540)$ & $(-0.137, -0.059)$ & $(0.424, -0.300)$ \\ \hline \end{tabular}
\end{center}
}
\end{table}

We notice that if assumption (III) in \Cref{sec:2.star} is satisfied the inclusions are well approximated. In \coyq{the case of \Cref{fig:3.stage1.d} we see that $F_{\text{ls}}[\mathbb E \xi]$ falsely identifies} the two close inclusions as one. However, \Cref{fig:3.stage1.f} suggests that there is uncertainty in detecting the inclusions and \copc{this} can be interpreted as the violation of assumption (III).

\coyq{We assign a bounding box $\square^{i}$ with $i=1,\dots,N_{\text{inc}}$ for each inclusion. We first find the \copc{leftmost} pixel of the $i$th inclusion in $\overline{\alpha}$, then set the \copc{leftmost} position of $\square^{i}$ by
\copc{subtracting $d_{\text{min}}^D/2$ pixels}
in order to ensure assumption (III) in \Cref{sec:2.star} \copc{is} satisfied.
\copc{We define} the right, top and bottom bounds of $\square^{i}$ \copc{in a similar manner}. These bounding boxes are essential for Stage 2,} since \copc{they decompose} the domain into regions with a single inclusion \copc{each}. \Cref{remark:2.star_shaped} ensures that there is a well-defined posterior distribution on such regions for the star-shaped prior.

\coyq{
\begin{remark}
The main purpose of Stage 1 is to estimate the bounding boxes and decompose the image into regions with a single inclusions. This can be achieved with other methods, e.g., Bayesian methods with MRF-type priors \cite{MarkkanenRoininen} or Besov-type priors \cite{1930-8337_2012_2_183,Lassas2009}. Although the sampling method used in Stage 1 is robust to discretization refinement, the forward mapping $G$ is still dimension-dependent. We can speed up Stage 1 by truncating the KL expansion with only $10\sim20$ modes to evaluate the bounding boxes. In our experiments the computational costs of Stage 1 is negligible compared to Stage 2.
\end{remark}

}

\cofinal{
\begin{remark} We emphasize that the performance of Stage 2 depends on the correct detection of number of inclusions in Stage 1. If Stage 1 of the approach fails to detect the correct number of bounding boxes, the true inclusions will not be supported by the posterior in Stage 2.
\end{remark}
}

\subsection{Stage 2: Estimation and Uncertainty Quantification of the Boundaries of Inclusions} \label{sec:3.stage2}
In this stage we construct the posterior measure \coyq{$\boldsymbol{\mu}^{\boldsymbol{y}}$} using the star-shaped prior. The number of inclusions $N_{\text{inc} }$ is estimated in Stage 1 together with the bounding boxes that decompose the domain into regions with a single inclusion \copc{each}. \correct{Note that in Stage 2 we only reconstruct the boundaries of the inlcusions provided by Stage 1.} We let $\xi_i$, for $i=1,\dots,N_{\text{inc}}$ be the $H$-valued random variables with $H = \mathbf T([0,2\pi))$.

We assume that the correlation parameter $\tau=1$ and $\gamma$ is known in \eqref{eq:2.cov_simple}. This choice of $\tau$ indicates that points on the boundary are highly \copc{correlated}. \coyq{Since $\xi_i$ models the boundary of an inclusion,} we may use the periodic boundary conditions to construct $Q_{\gamma,1}$. Furthermore, \coyq{since $\xi_i$ is 1D,} we can assemble them directly from \eqref{eq:2.kl_1d}. Furthermore, \coyq{we assume $m$ in \eqref{eq:2.KL} is \copc{a known constant} for all $\xi_i$. The centers of mass estimated in Stage 1 are used as the centers of the star-shaped inclusions.} 



The posterior measure \coyq{$\boldsymbol{\mu}^{\boldsymbol{y}}$} is constructed following \Cref{sec:2.posterior}. We draw samples from the posterior distribution using the Gibbs sampling method, which is introduced in \Cref{sec:2.mcmc}, to estimate the two types of mean for the posterior, i.e. \coyq{$\overline \alpha_{\mathbb E c} = F_{\text{star}}[ (\mathbb E \boldsymbol{\xi},\mathbb E \boldsymbol{c}) ]$ and $\widehat \alpha_{\mathbb E c} = \mathbb E F_{\text{star}}[(\boldsymbol{\xi},\boldsymbol{c})]$,} introduced in \Cref{sec:3.stage1}. Here, the subscript $\mathbb E c$ means that the expected image is drawn around the expected centers.

\correct{We consider the highest posterior density (HPD) credibility interval \cite{gelman2013bayesian} to estimate the uncertainties in detecting the boundaries of the inclusions. A 95\% HPD interval for \coyq{an 1D real-valued random variable with probability density $\pi(x)$ is defined by
\[
I_{\text{HPD}}=\{x\in \mathbb R | \pi(x) > c_{\text{HPD}}\}
\]
where $c_{\mathrm{HPD}} > 0$ is the largest number such that
\begin{equation}
    \int_{\{ x\in \mathbb R| \pi(x) > c_{\text{HPD}} \}} \pi(x) \ dx = 0.95,
\end{equation}}
i.e., the smallest interval with the highest credibility. Note that there are other credibility intervals that can be used, e.g., the commonly used equal-tailed credibility interval \cite{gelman2013bayesian}. We choose the HPD credibility interval since it reveals the asymmetry and multi-modal properties of the posterior. Furthermore, it contains the maximum a posterior (MAP) estimate \cite{gelman2013bayesian}.
}

\coyq{In \Cref{alg:3.stage_2}, we summarize how to use the HPD to quantify the uncertainties in the estimated boundaries. Note that we compute the HPD intervals only with respect to $\boldsymbol{c}$, since we find that in the context of the experiments in this paper the center of the star-shaped inclusion reveals the full modality of the posterior. Furthermore, the knowledge of the exact location of the center results in a posterior with a single mode. The results for the latter is omitted for brevity.}


\begin{algorithm}[th]
\caption{Visualizing the uncertainty in the estimated boundaries}
\label{alg:3.stage_2}
\begin{algorithmic}[1]
\STATE{Input $N_{\text{inc}}$, \coyq{$\boldsymbol{c}$ and $\square^{i}$ for $i=1,\cdots,N_{\mathrm{inc}}$ obtained in Stage 1 by using \Cref{alg:3.stage_1}.}}
\FOR{$i=1, \cdots, N_{\text{inc}}$}
\STATE{Construct the posterior measure \coyq{$\mu^{\boldsymbol{y}}$ using the star-shaped prior with a single inclusion.}}
\STATE{Draw samples $\{ (\xi^{(j)}_i, c^{(j)}_i) \}_{j=1}^{N_{\text{sample}}}$ by using the Gibbs sampling method with \Cref{alg:3.gibbs}.}
\STATE{\coyq{Using the samples of $\{c_i^{(j)}\}_{j=1}^{N_{\text{sample}}}$, compute the HPD intervals $I^{i}_{\mathrm{HPD}}$, which includes $N^{i}_{\text{modes}}$ disjoint subintervals, $\{I^{i,k}_{\mathrm{HPD}}\}_{k=1}^{N^{i}_{\text{modes}}}$, according to $N^{i}_{\text{modes}}$ modes in the posterior}.}
\STATE{\coyq{Compute the mean $\{ (\bar \xi^{k}_i, \bar c^{k}_i) \}$ of the samples $\{ (\xi^{(j)}_i, c^{(j)}_i) \}$ with $j\in I^{i,k}_{\mathrm{HPD}}$ for $k=1,\cdots,N^{i}_{\text{modes}}$.}}
\STATE{\correct{Collect the HPD intervals based on the samples of $\{ \xi^{(j)}_i(\vartheta) \}_{j\in I_{\text{HPD}}^{i,k}}$ for each angle $\vartheta$ to represent the uncertainty in $\bar \xi^{k}_i(\vartheta)$.}}
\ENDFOR
\end{algorithmic}
\end{algorithm}

\correct{\coyq{In the last step of \Cref{alg:3.stage_2} we quantify the local uncertainties for each angle $\vartheta$, then we} construct a radial credibility band by interpolating the radial HPD intervals for each inclusion. This estimate provides a \coyq{local} uncertainty quantification of the shape of the inclusion. \coyq{In addition, we also provide the global variance $\mathbb E \| \xi_i - \mathbb E \xi_i \|^2_{H^{i}}$ as a global uncertainty estimator for each inclusion.}}

\correct{
\begin{remark}
The samples from the radial random variables $\xi_i$ are generally \copc{coupled} with the samples of the centers $c^{(j)}_i$, due to the structure of the the posterior. However, by ignoring the center component of the samples we can approximately achieve independent samples of $\xi_i$. We plot the credibility band with respect to $\bar c^{k}_i$, for $i=1,\dots,N_{\text{inc}}$. This is only for illustration purposes and should not be confused with samples of the random variable $\xi|(c_i=\bar c^k_i)$.
\end{remark}
}

\section{Numerical Results} \label{sec:4}
In this section we test the presented method on synthetic images with inclusions. \copc{The test} images contain single and multiple inclusions with boundaries with various regularities. We also test the method on a tomographic X-ray data of a lotus root filled with attenuating objects.

\subsection{Generated Inclusions} \label{sec:4.1.gen}
In this section, we consider synthetic images with inclusions. We construct an image (\coyq{attenuation} field $\alpha$) using the star-shaped prior.
We then apply \coyq{our two-stage} method to estimate the center of inclusions and quantify the uncertainty in estimating the boundaries.

We consider the domain $D$ to be the unit disk and construct the random \coyq{attenuation} fields using the noisy star-shaped prior \eqref{2.eq.noisy_star}
\begin{equation}
    \alpha^{\text{noisy}} (x) = \coyq{F^{\text{noisy}}_{\text{star}}[\xi_0,(\boldsymbol{\xi},\boldsymbol{c}) ](x),}
\end{equation}
where foreground and background attenuation is set to $a^+=1$ and $a^-=0.1$, respectively. Recall that $\xi_0$ is the random variable that controls the fluctuations in the background, while \coyq{$\boldsymbol{\xi}$ determines the boundaries of the inclusions}. We draw samples from the noisy star-shaped density by first randomly choosing a center in the unit disk and then taking \coyq{$\xi_0\sim \mathcal N(0,Q_{\gamma^0,\tau^0})$ with $\gamma^0 = 2.5$, $\tau^0 = 50$} as the parameters for the background. We use a sampling-and-elimination method to ensure that the inclusion appears inside the unit disk. We truncate the KL expansion after 100 terms.

\coyq{In our numerical tests, we use the scikit-image package \cite{vanderWalt2014} (version 0.19.0.dev0),} a Python image processing toolkit, to carry out the forward calculations, i.e., 2D Radon transforms \copc{of} images. The package uses the parallel beam geometry described in \Cref{sec:1}. We set $\theta_\text{max} = 180^\circ$ and $N_{\theta} = 100$ for a full set of images. The \copc{detector} length is set to 2 and centered at the origin. This means that $L^\perp_{\theta}$ is centered at the origin and  $s\in[-1,1]$.


\coyq{In Stage 2 (\Cref{alg:3.gibbs}) of our method, we first run a warm-up phase of 20 Gibbs iterations with $N_{\text{pCN}} = N_{\text{MH}} = 500$. We tune the parameters in the proposal such that the loops within the Gibbs sampler provide an acceptance rate of 15\% to 25\%. Then, we fix the parameters, set $N_{\text{pCN}} = N_{\text{MH}} = 20$, and calculate $2\times 10^4$ samples from $\boldsymbol{\mu}^{\boldsymbol{y}}$. We note that designing efficient MCMC methods for such posteriors is an area of active research, see, e.g., \cite{Dunlop2016,1930-8337_2016_4_1007,SAIBABA2021110391}.
}


\subsubsection{Single Inclusion} \label{sec:4.1.1.single}
In this section we consider \coyq{attenuation} fields $\alpha$ with a single inclusion. We investigate \coyq{the effect of the observation noise on the estimations in our method.} 
The results in this section are illustrated on randomly generated test examples. The authors found similar results for all other test examples with same setting.

\coyq{To create ground truth images we use the noisy start-shaped prior \eqref{2.eq.noisy_star}, and set the inclusion $\xi_{1}\sim \mathcal N(0,Q_{\gamma^{1},\tau^{1}})$ with $\gamma^{1} \in \{2, 3\}$ and $\tau^{1}=1$}. 
We define the \emph{signal to noise ratio (SNR)} in an observation to be
\begin{equation}
    \coyq{\text{SNR} = \frac{\| \boldsymbol{y} \|_2}{\|\boldsymbol{\varepsilon}\|_2},
}\end{equation}
and define the \emph{noise level} percentage in an observation to be $1/\text{SNR}\times 100$.

\coyq{In \Cref{fig:4.1inc_multi_mode} we show the test problem with $\gamma^1=3$ and noise level $1\%$. Here, the true star-shaped center of the inclusion is $c = (0.154, -0.215)$. We provide some samples from the prior distribution and the posterior distribution according to Stage 2 of our method in \Cref{fig:4.10inc.f,fig:4.10inc.g}. In addition, in \Cref{fig:4.10inc.a,fig:4.10inc.b,fig:4.10inc.c} we illustrate the performance of our method. By using HPD we are able to quantify the uncertainties of the boundaries according to each mode, which are given in \Cref{fig:4.10inc.b,fig:4.10inc.c}. For each mode although the estimated center does not match the true star-shaped center, the boundary of the inclusion is still well reconstructed. The reason is that the star-shaped center is not uniquely defined and is not consistent with the center of the mass that is used in our method. Moreover, we would like to remind that the main goal of our method is to reconstruct the boundary of the inclusion. \Cref{fig:4.10inc.a} shows the posterior mean with the $99\%$ HPD interval.

\correct{The posterior mean is not a good representation of the boundary due to the multi-modal nature of the posterior.} This is clearly \copc{seen in} \Cref{fig:4.10inc.a}.} \correct{Therefore, in the rest of this section we let $(\bar \xi_i^1, \bar c_i^1)$, computed from the first HPD interval $I^{i,1}_{\text{HPD}}$, \copc{represent} the boundary. The choice of the first HPD is arbitrary. The authors confirm that in the experiments in this work the quality of the reconstructed boundaries $(\bar \xi_i^k, \bar c_i^k)$ is comparable for all HPD intervals $I^{i,k}_{\text{HPD}}$.}

\begin{figure}[tbhp]
    \centering
    \subfloat[\coyq{true $\alpha(\xi_{1})$}]{\label{fig:4.10inc.d}\includegraphics[width=0.3\textwidth]{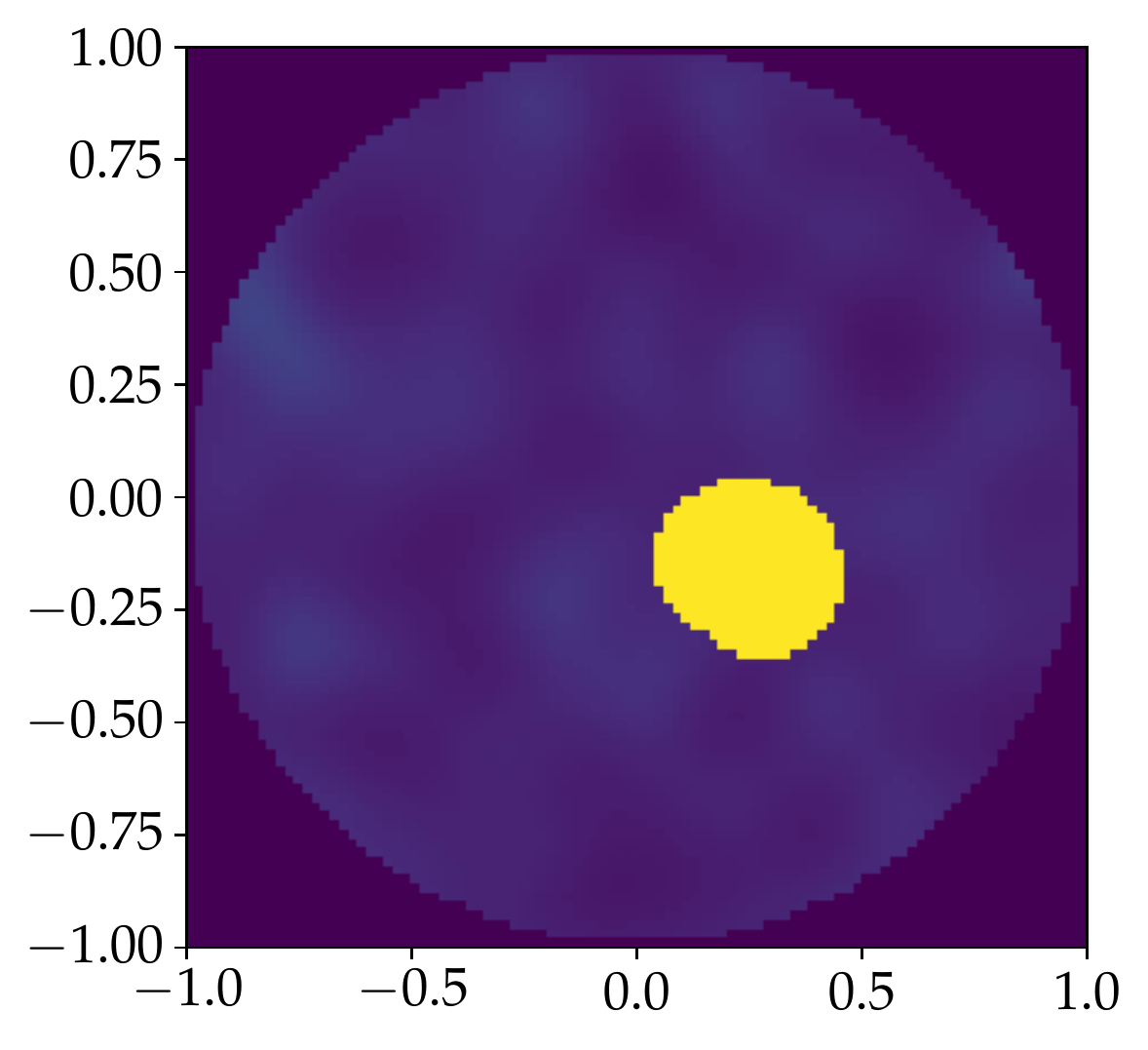} }\hspace{5mm}
    \subfloat[\coyq{Sinogram $\boldsymbol{y}$}]{\label{fig:4.10inc.e}\includegraphics[width=0.3\textwidth]{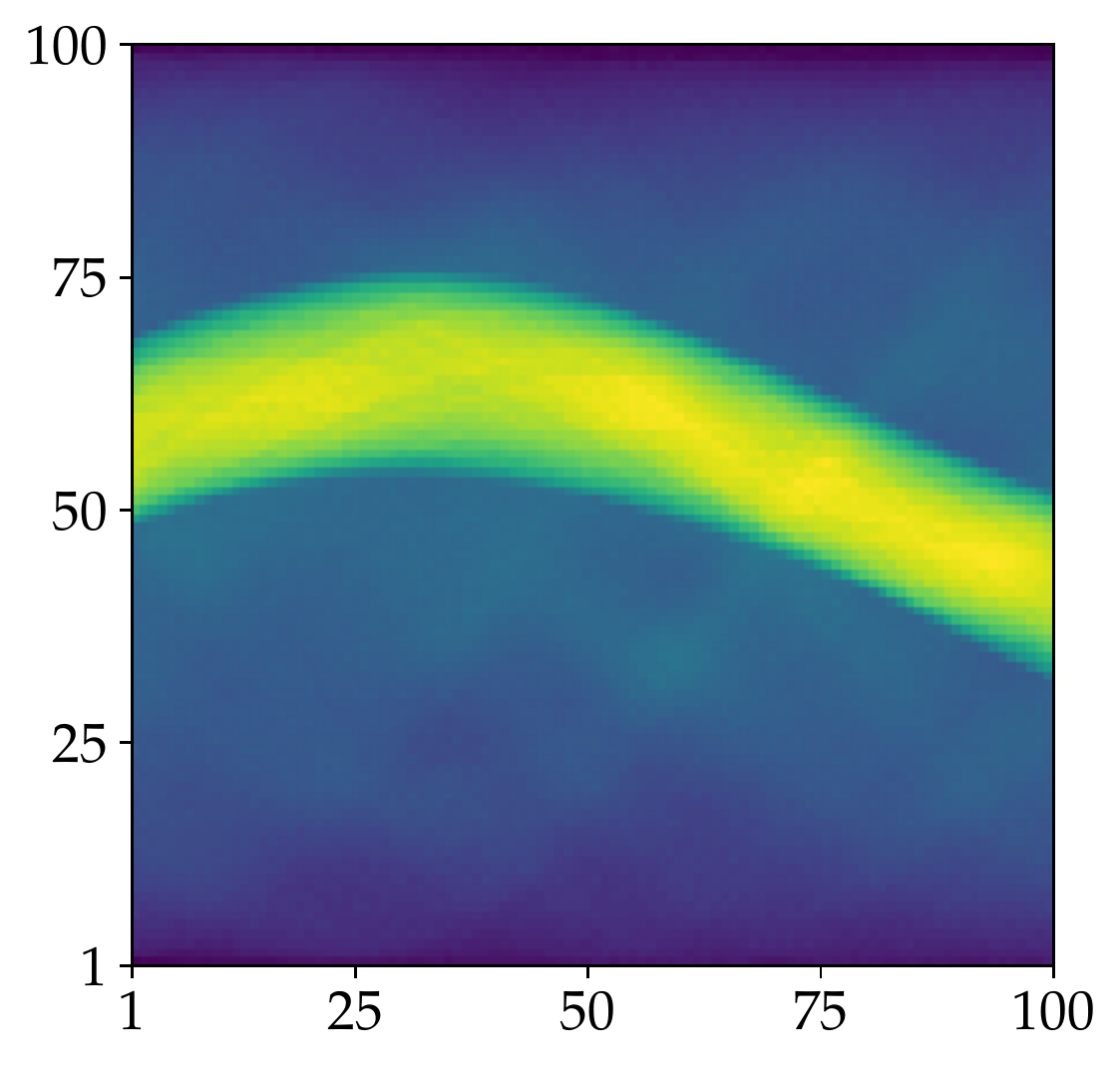} } \\
     \caption{\coyq{Test problem for \copc{the single-inclusion} case with regularity parameter $\gamma^1 = 3$ and noise level $1\%$.}}
\label{fig:4.1inc_multi_mode}
\end{figure}

\begin{figure}[tbhp]
    \centering
    \subfloat[prior samples]{\label{fig:4.10inc.f}\includegraphics[width=0.45\textwidth]{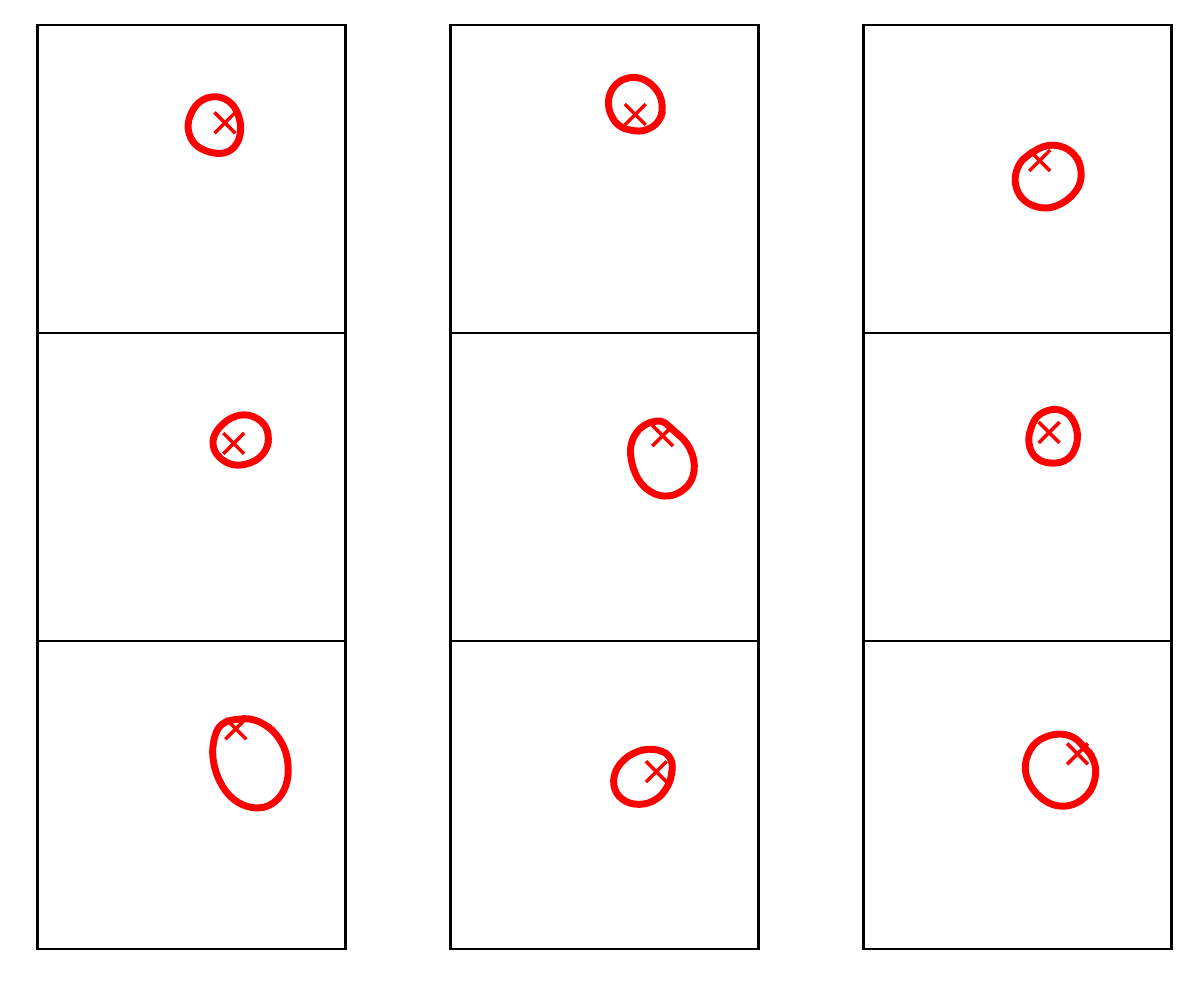} }
    \subfloat[posterior samples]{\label{fig:4.10inc.g}\includegraphics[width=0.45\textwidth]{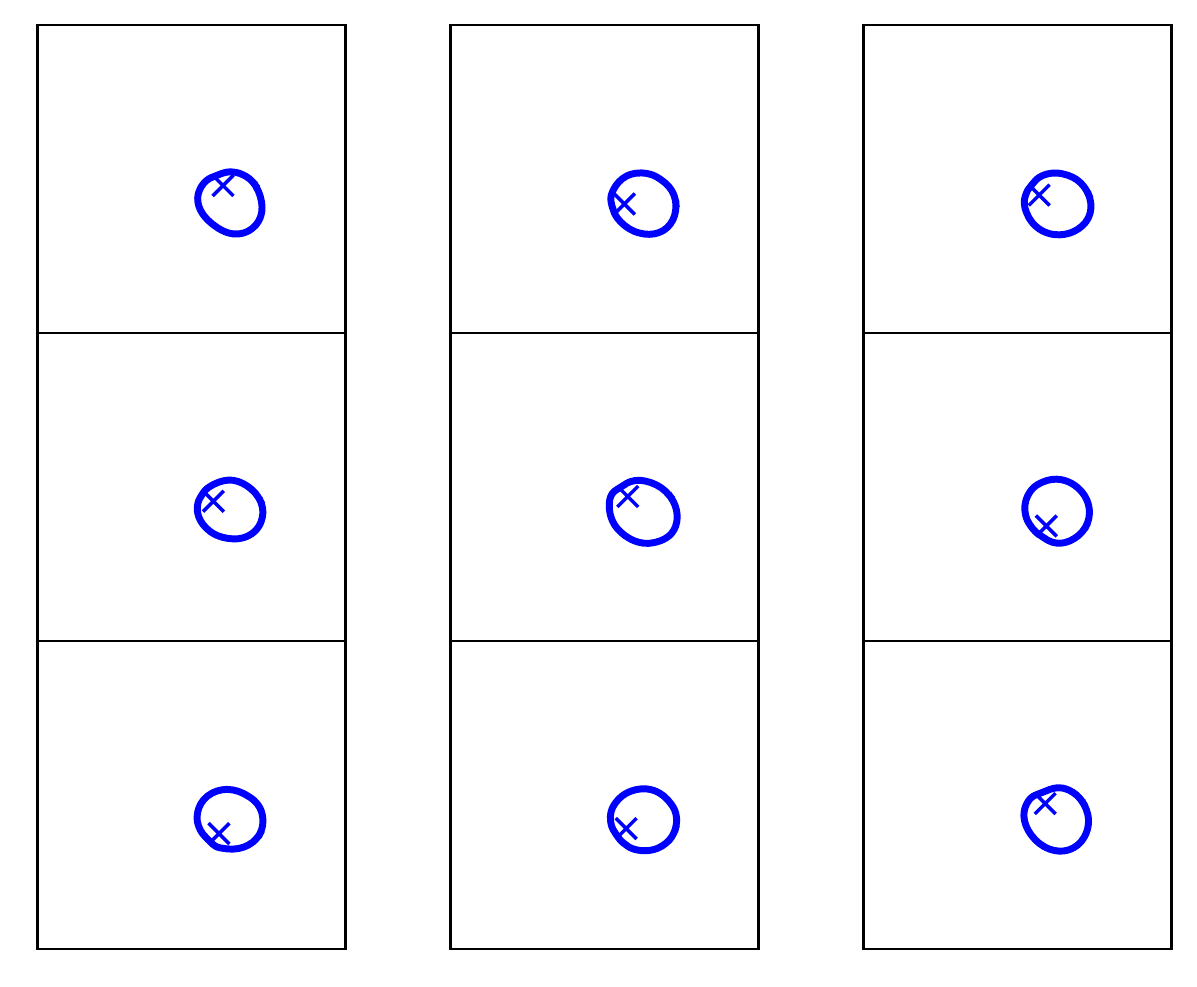} } \\
    \subfloat[1st posterior mode]{\label{fig:4.10inc.b}\includegraphics[width=0.3\textwidth]{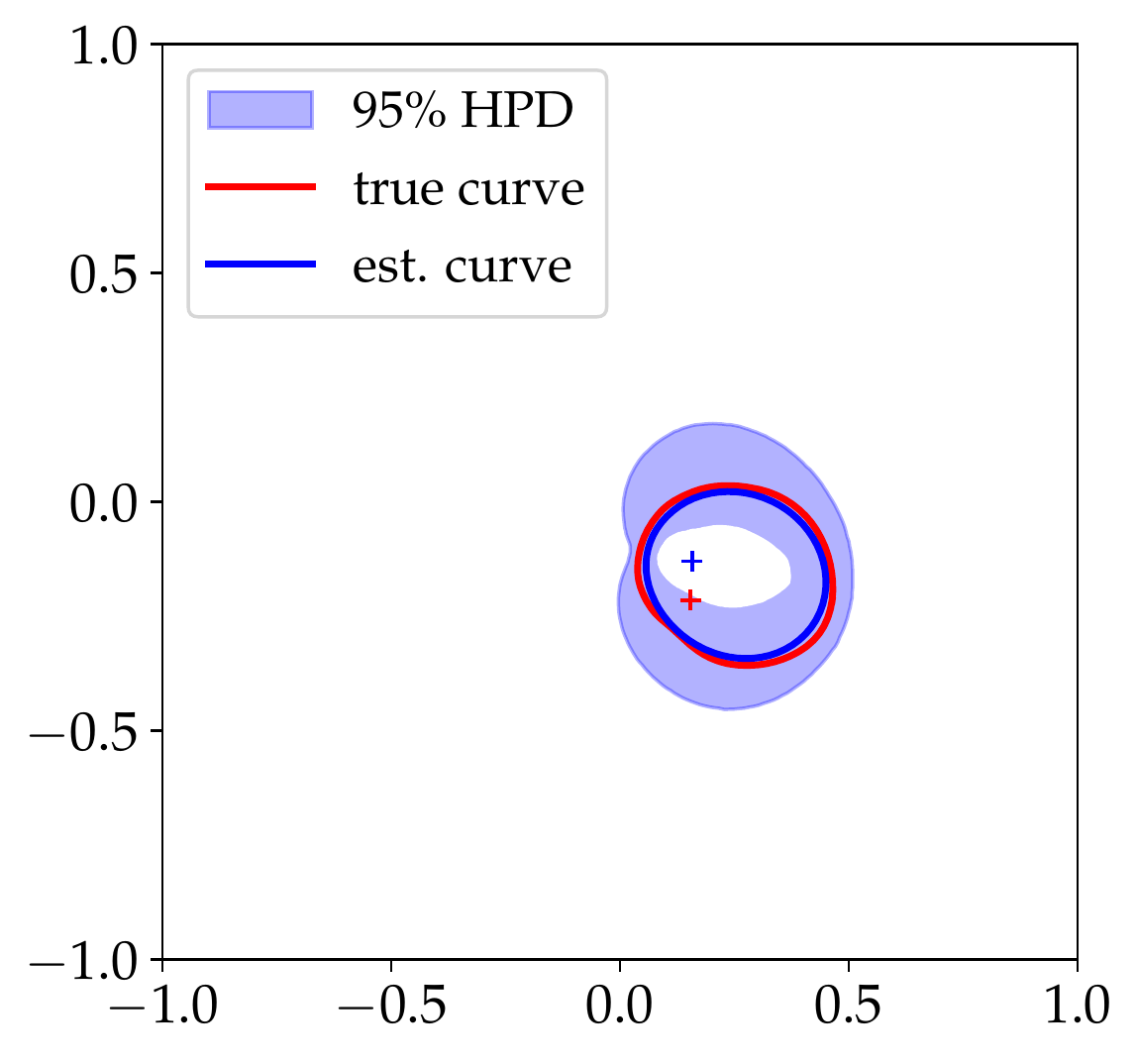} }
    \subfloat[2nd posterior mode]{\label{fig:4.10inc.c}\includegraphics[width=0.3\textwidth]{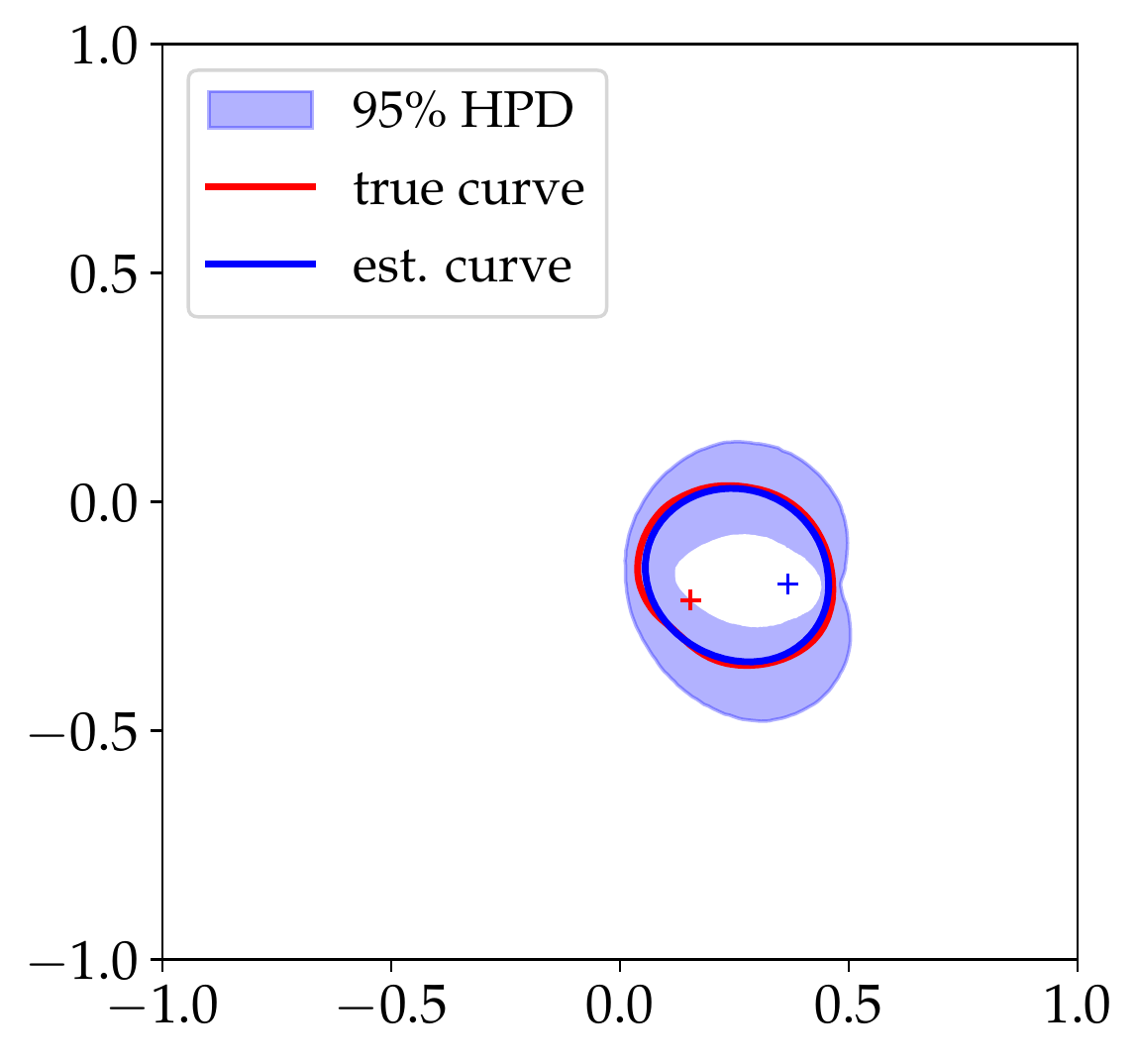} }
    \subfloat[posterior mean]{\label{fig:4.10inc.a}\includegraphics[width=0.3\textwidth]{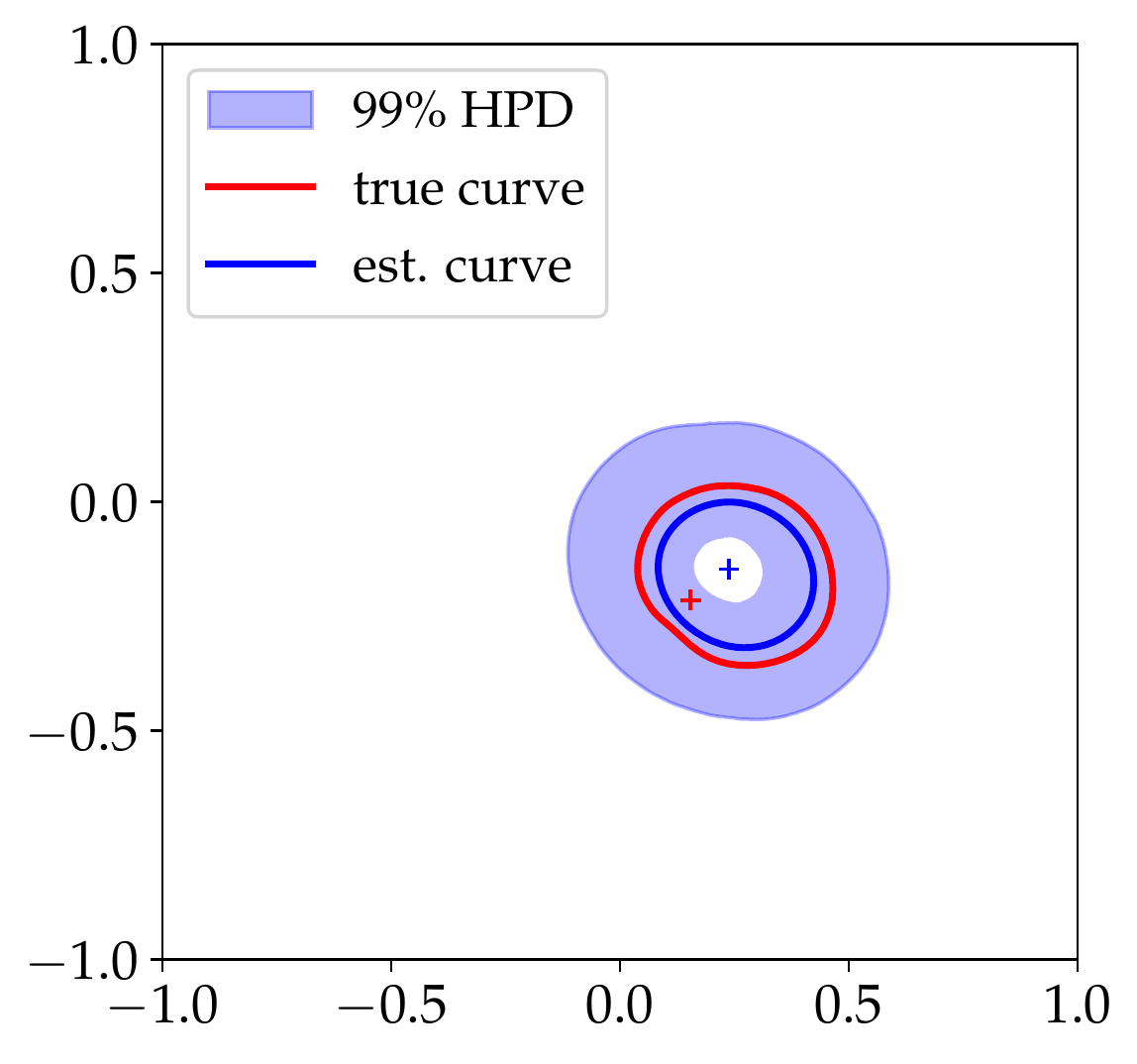} }
    \caption{\coyq{Results of our method to the \copc{single-inclusion} test problem shown in \Cref{fig:4.1inc_multi_mode}. In \Cref{fig:4.10inc.b,fig:4.10inc.c,fig:4.10inc.a}, the red curve and the red cross represent the true boundary and the true star-shaped center. The blue cross and the blue curve represent $\mathbb E c_1$ and $F_{\text{star}}[(\mathbb E \xi_1,\mathbb E c_1)]$, respectively. }}
\label{fig:4.prior_posterior}
\end{figure}

\coyq{In \Cref{fig:4.diagnostics} we present the diagnostics for the Gibbs sampler in \Cref{alg:3.gibbs} for 4 MCMC chains. The first chain is constructed by using the mass center obtained in Stage 1 as the initial center for star-shaped inclusion. The other chains \copc{use} a uniformly distributed random initial guesses from the bounding box. In all our numerical results, we only focus on samples in $I^1_{\text{HPD}}$. We notice that the estimated boundaries detected for other HPD intervals are comparable with the ones in $I^1_{\text{HPD}}$ in all our experiments. \Cref{fig:4.diagnostics.b} shows the auto-correlation function (ACF) of these samples for each chain as well as the mean of all chains. We can see that ACF drops to $0$ after a reasonable number of samples. 

\cofinal{We compute the effective sample size (ESS) of the MCMC samples following \cite{10.1214/20-BA1221}. The ESS and the auto-correlation function is computed using the ArviZ \cite{arviz_2019} package in Python. We refer the reader to \cite{mcbook,10.1214/20-BA1221} for more detail on ESS.
}

The ESS for all chains are between 200 and 1000 samples. In \Cref{fig:4.diagnostics.a} we can see that the mean curves of samples in $I^1_{\text{HPD}}$ for all chains match. This is a necessary condition for the MCMC method to be converging \cite{gelman1992inference}.}

\begin{figure}[tbhp]
    \centering
    \subfloat[auto-correlation function]{\label{fig:4.diagnostics.b}\includegraphics[height=0.425\textwidth]{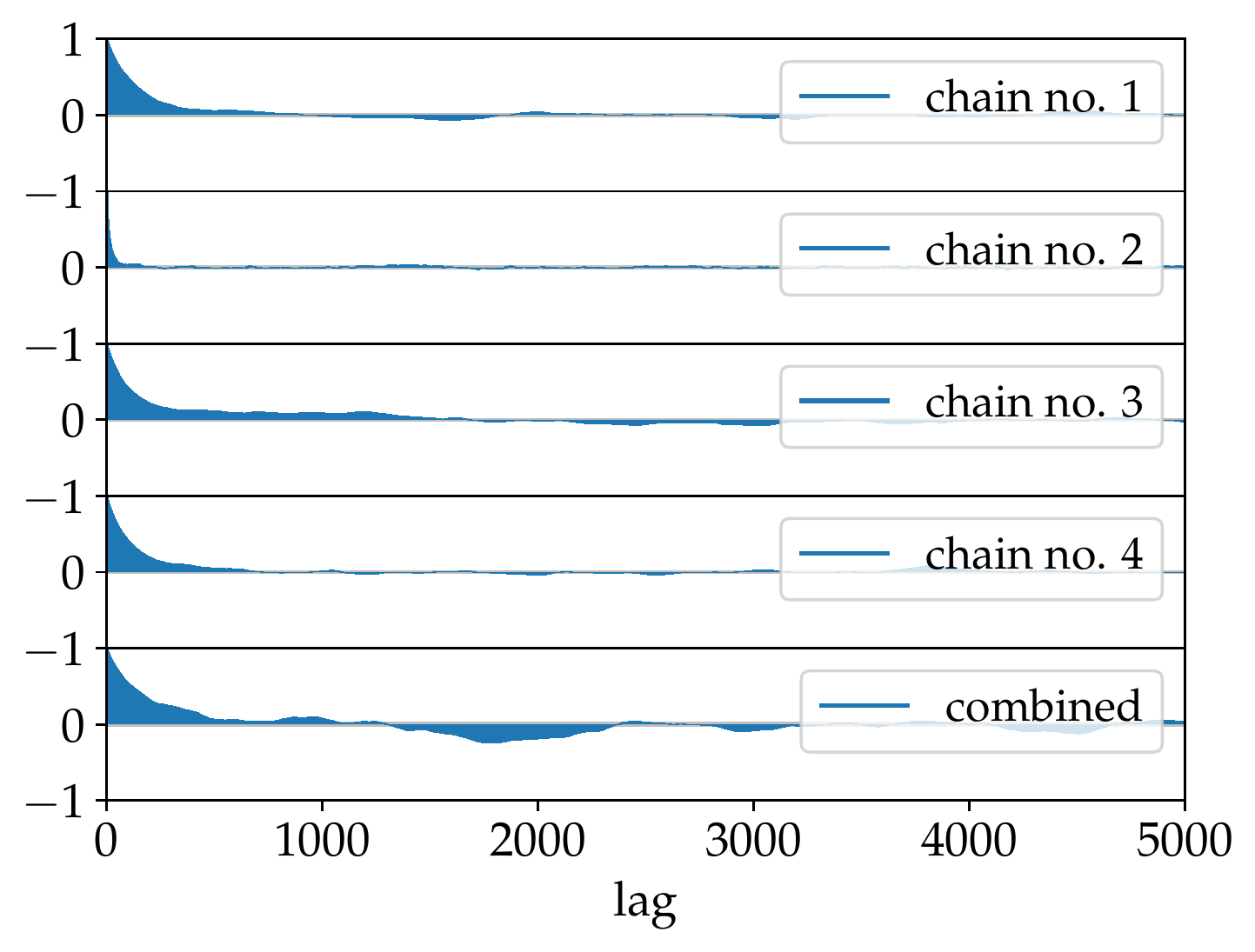} }\hspace{5mm}
    \subfloat[mean curves]{\label{fig:4.diagnostics.a}\includegraphics[height=0.35\textwidth]{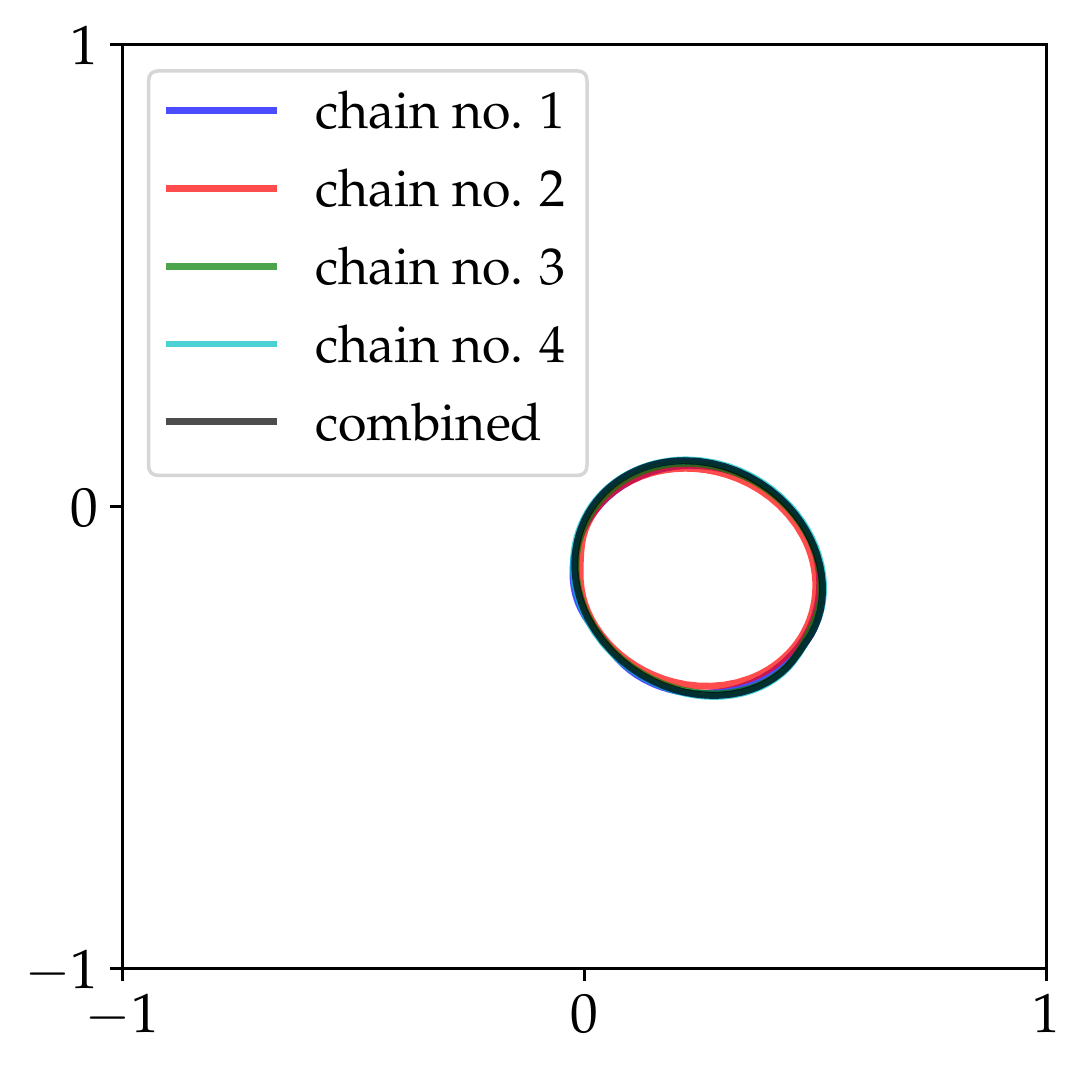} }
    \caption{\coyq{Performance of the Gibbs sampler in \Cref{alg:3.gibbs}. Here we only consider the samples in $I^1_{\text{HPD}}$.}}
\label{fig:4.diagnostics}
\end{figure}

\coyq{To illustrate the effect of the noise level, we apply our method to reconstruct the boundary from \delete{the} sinograms including $2.5\%$ and $5\%$ noise, respectively. The true $\alpha$ is the same as shown in \Cref{fig:4.10inc.d}, and the results are show in \Cref{fig:4.1inc}. We can see that our method can provide a good estimated boundary of the inclusion, although the estimated star-shaped center increases the uncertainty in estimating the boundary. It is clear that the accuracy of estimation increases as the noise level is reduced.} In \Cref{fig:4.1inc} the 99\% HPD band represents the uncertainty in estimating the boundary of the inclusion. We notice that the HPD band in all test cases completely covers the true outline of the inclusions. \coyq{Furthermore, it is significantly smaller for the test case with smaller noise level.}
The HPD band does not provide a uniform uncertainty around the boundary, as it is suggested in \Cref{fig:4.1inc}. The variation in the uncertainty around the boundary is a result of the estimated center and the choice of the Whittle-Mat\'ern covariance, \correct{i.e., the prior parameters}.


\begin{figure}[tbhp]
    \centering
    \subfloat[$2.5\%$ noise level]{\label{fig:4.1inc.d}\includegraphics[width=0.35\textwidth]{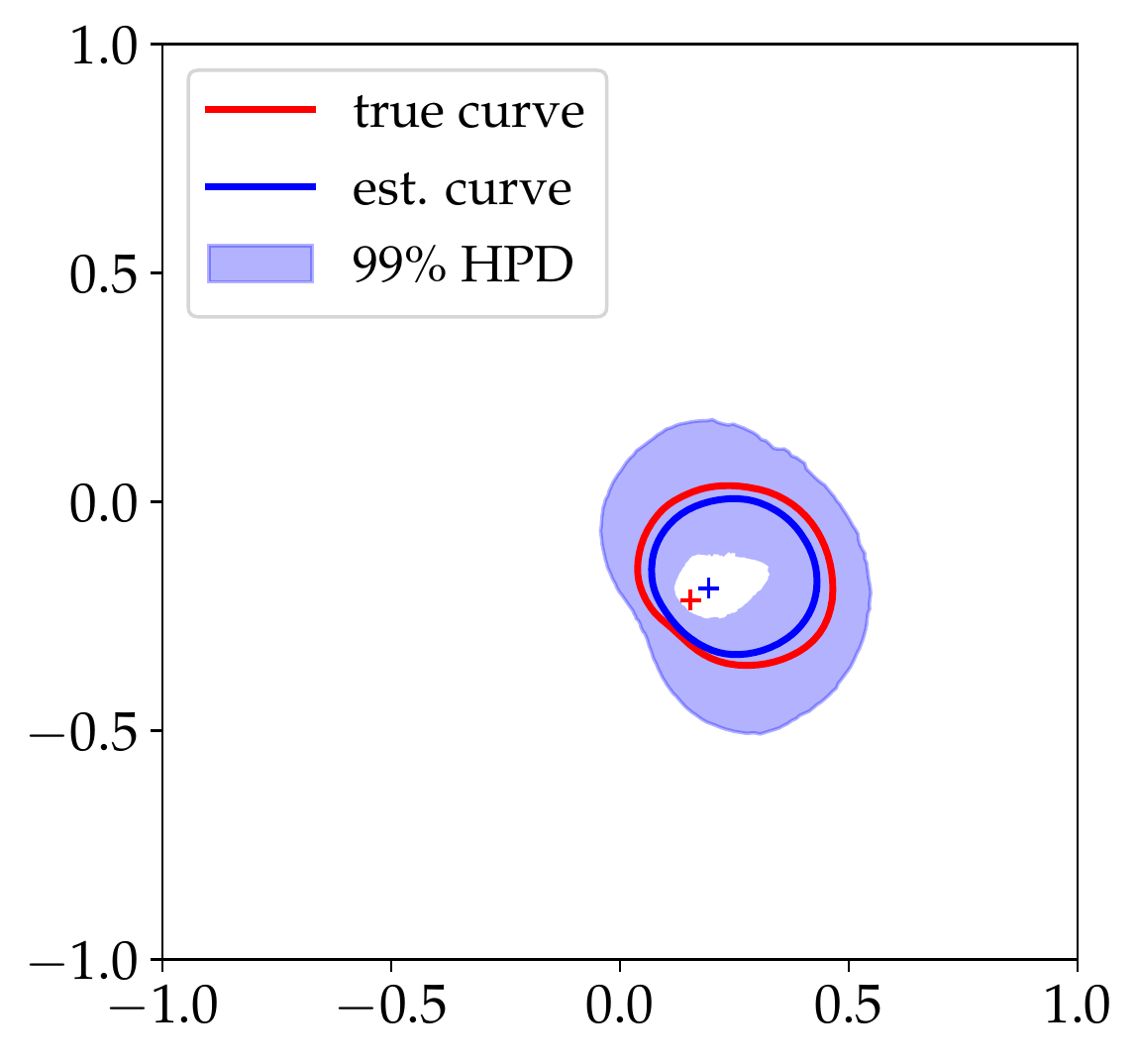} }\hspace{5mm}
    \subfloat[$5\%$ noise level]{\label{fig:4.1inc.e}\includegraphics[width=0.35\textwidth]{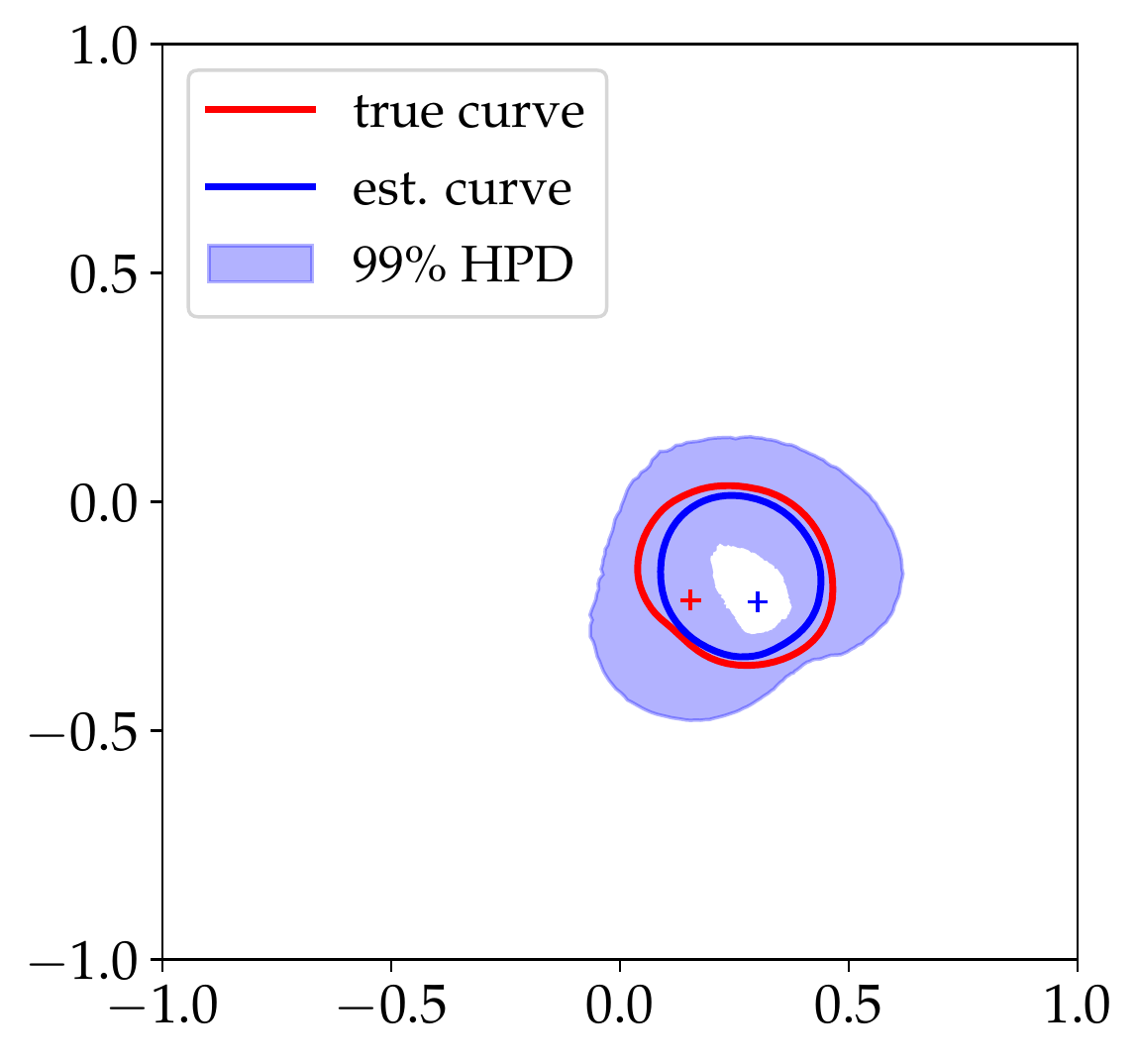} }
    \caption{\coyq{Illustration of the performance of our method with respect to different noise levels. The red cross represents the true star-shaped center, and the blue cross represents $\mathbb E c_1$ from Stage 1.}}
\label{fig:4.1inc}
\end{figure}





\coyq{To study the global uncertainty estimation, in \Cref{tab:4.1inc_var} we list the global variances $\mathbb E \| \xi_1 - \mathbb E \xi_1 \|^2_H$ for different noise levels. We notice that the relative difference in variance among the test cases confirms the uncertainty that is visualized in \Cref{fig:4.1inc}. This confirms our intuition that larger observation noise results in larger uncertainty in our estimation.}

\begin{table}[tbhp]
{\footnotesize
  \caption{Estimation of the global variances.} \label{tab:4.1inc_var}
\begin{center}
\setlength{\tabcolsep}{10pt} 
\renewcommand{\arraystretch}{1.5}
\begin{tabular}{|c|c|c|c|} \hline
 & \Cref{fig:4.10inc.b} & \Cref{fig:4.1inc.d}& \Cref{fig:4.1inc.e}  \\ \hline
$\mathbb E \| \xi_1 - \mathbb E \xi_1 \|^2_H$ & 0.048  &  0.323 & 0.334 \\ \hline
\end{tabular}
\end{center}
}
\end{table}

\coyq{In the previous tests, we assume that we know the correct value of $\gamma^{1}$. Now we will discuss the importance of the setting of $\gamma^{1}$.
We generate the ground truth image using $\gamma^1 = 2$, and compare the results by applying our method with $\gamma^{1}=2$ and $3$, which are shown in \Cref{fig:4.1inc_2}.}

\coyq{With the correct $\gamma^{1}$, we can see that} the estimated curve in \Cref{fig:4.1inc_2.a} accurately represents the true boundary. The location of some of the larger indentations in boundary is also represented in the estimated boundary. \correct{ For this example, we provide some of the prior and posterior samples in \Cref{fig:4.1inc_2.c,fig:4.1inc_2.d}, respectively.}

\coyq{\Cref{fig:4.1inc_2.b} shows the results by using $\gamma^{1}=3$ in our method. The reconstructed curve (the blue curve) is smoother than the true curve due to larger regularity parameter $\gamma^{1}$. However,} it fairly estimates the overall shape and orientation of the inclusion. We notice that the \coyq{HPD} band around the boundary is \coyq{wider than} the one in \Cref{fig:4.1inc_2.a}. \copc{A wrong choice of} \correct{the prior parameters may be the reason for larger uncertainty in the boundary.}


\coyq{In \Cref{fig:4.1inc_2} we also give the global variance in both cases.} The relative differences between the variances verifies the uncertainty bands indicated in the figures. We note that the global variance in \Cref{fig:4.1inc_2.b} is significantly larger than the one in \Cref{fig:4.1inc_2.a}. This can potentially be used to identify the correct regularity in case $\gamma^1$ is unknown. One approach can be to minimize the global variance over $\gamma^1$. This is left as future work.

\begin{figure}[tbhp]
    \centering
    \subfloat[prior samples]{\label{fig:4.1inc_2.c}\includegraphics[width=0.45\textwidth]{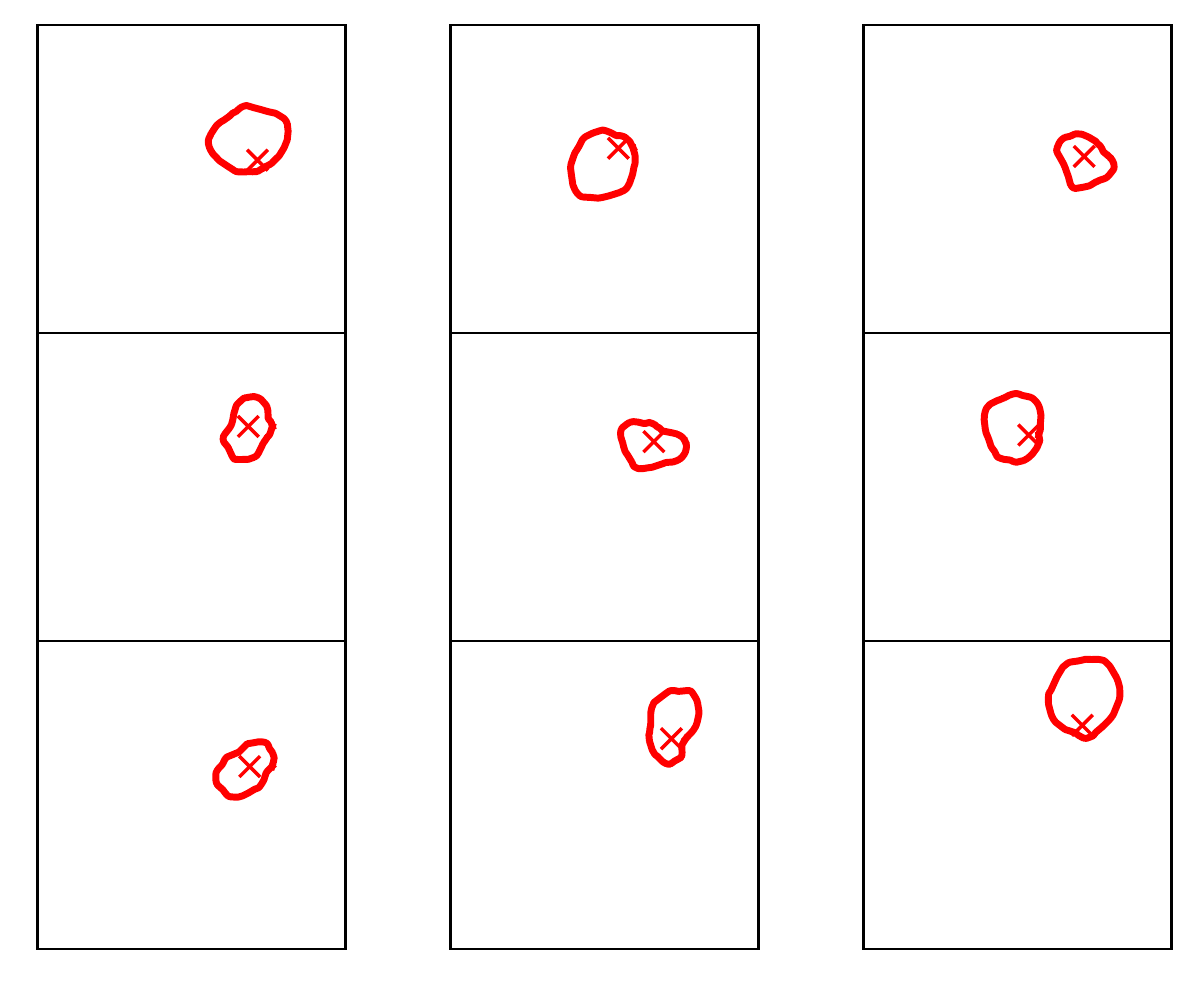} }
    \subfloat[posterior samples]{\label{fig:4.1inc_2.d}\includegraphics[width=0.45\textwidth]{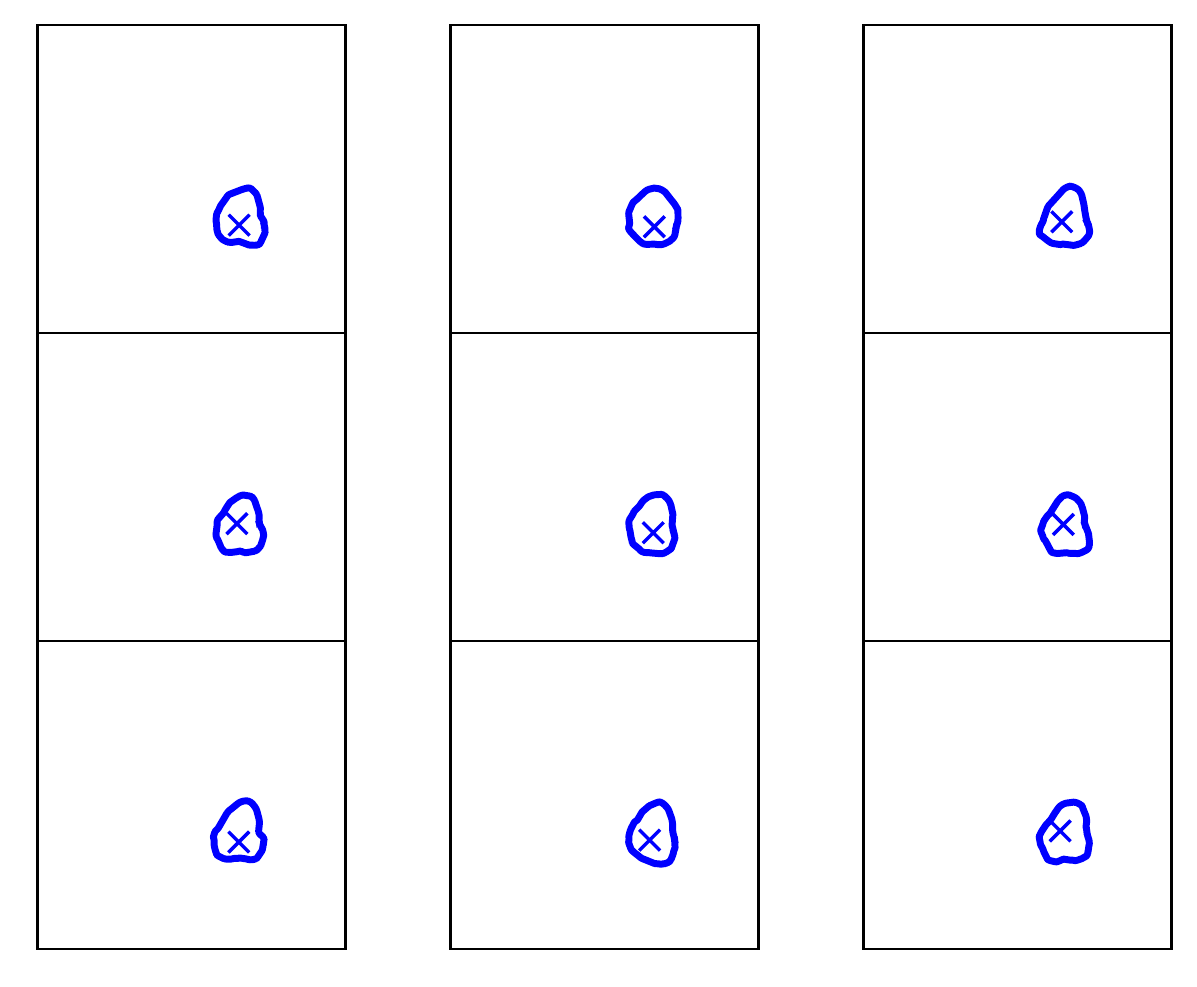} } \\
    \subfloat[using true $\gamma^{1}$ ($\gamma^{1}=2$)]{\label{fig:4.1inc_2.a}\includegraphics[width=0.35\textwidth]{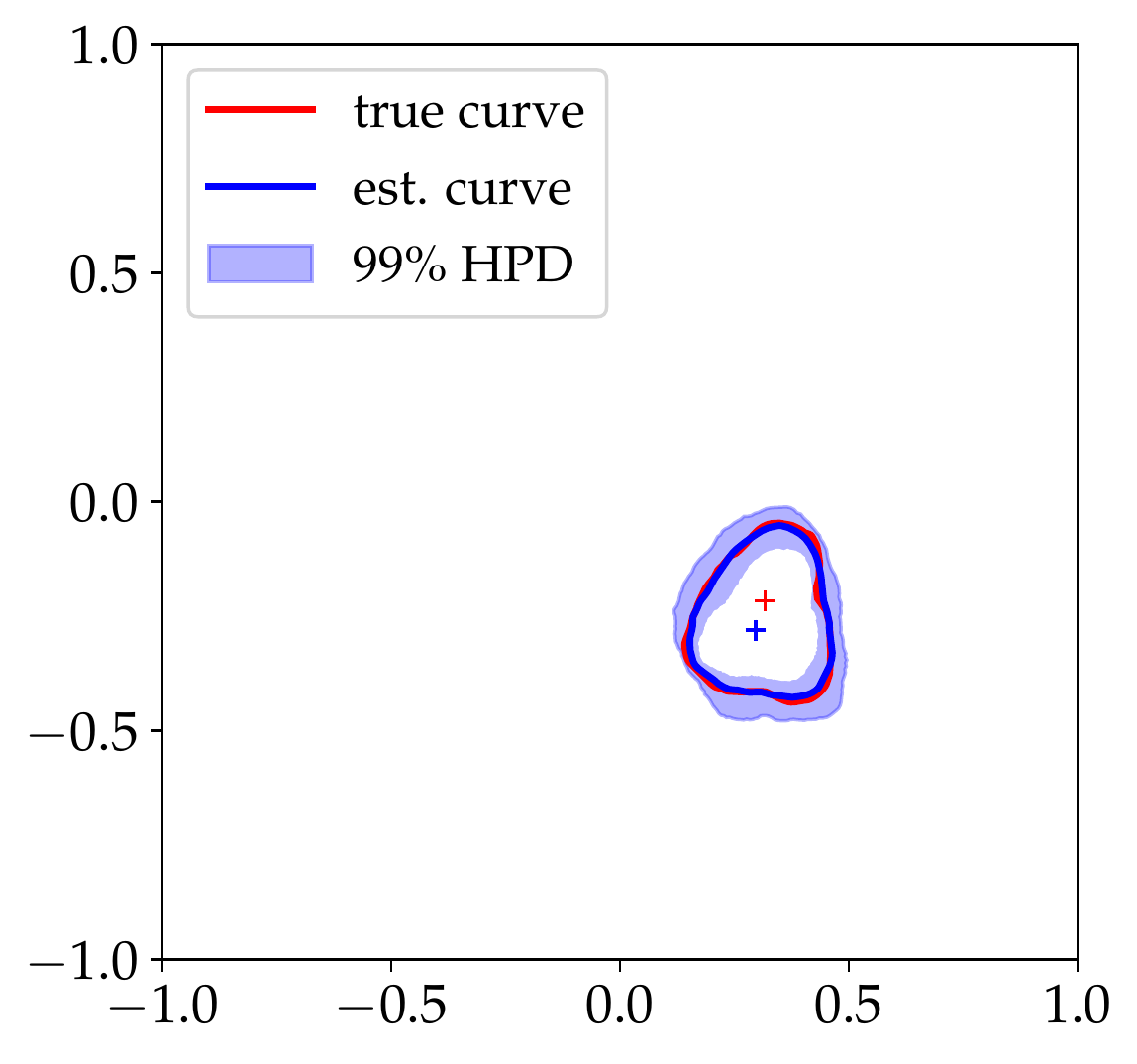} } \hspace{5mm}
    \subfloat[using wrong $\gamma^{1}$ ($\gamma^{1}=3$)]{\label{fig:4.1inc_2.b}\includegraphics[width=0.35\textwidth]{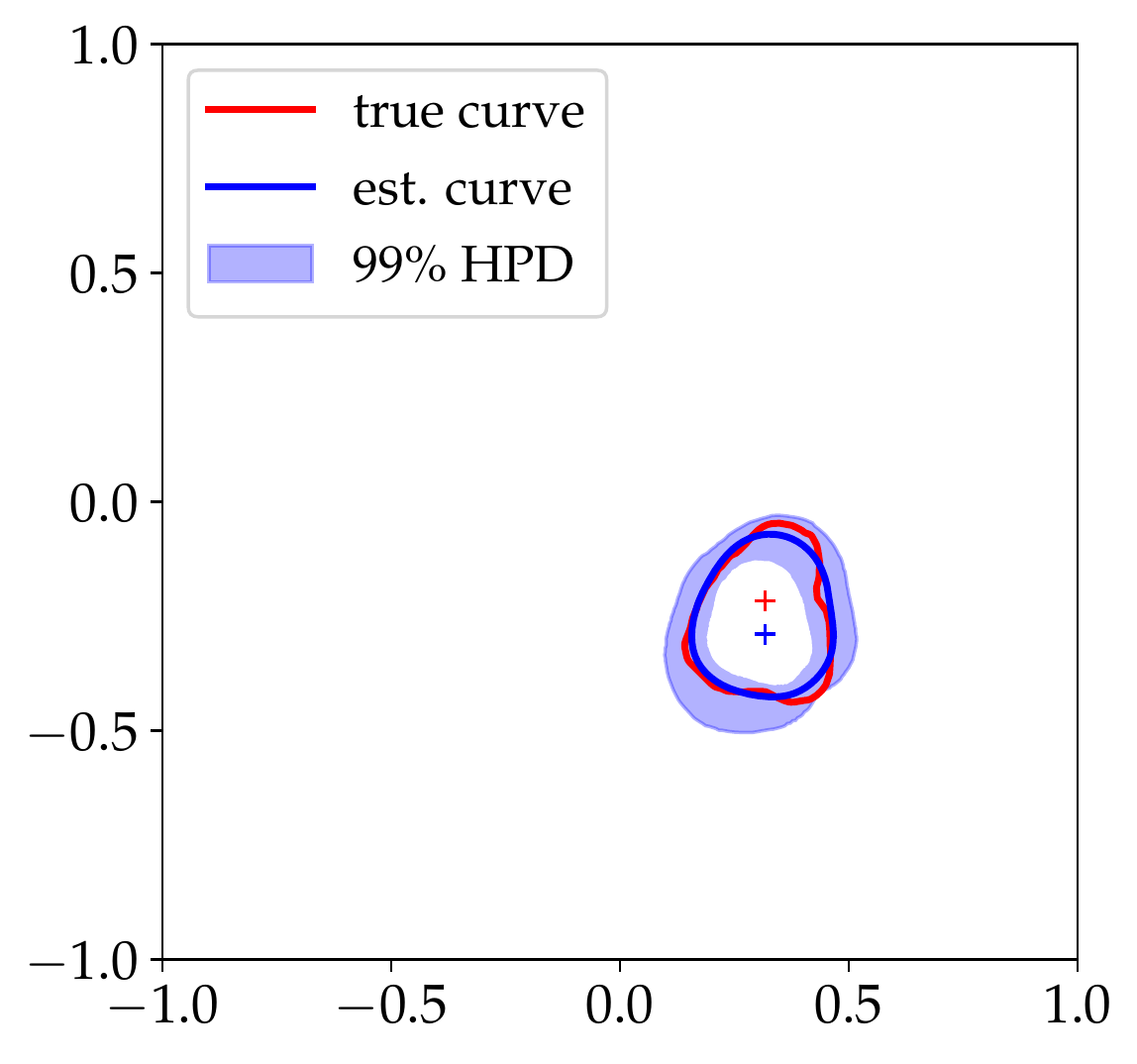} }
    \caption{\coyq{Illustration of the performance of our method for for different values of the regularity parameter $\gamma^{1}$. The global variances, $\mathbb E \| \xi_1 - \mathbb E \xi_1 \|^2_H$, in the estimations for \Cref{fig:4.1inc_2.a,fig:4.1inc_2.b} are 0.0561 and 0.198, respectively.}}
\label{fig:4.1inc_2}
\end{figure}



\subsubsection{Multiple Inclusions} \label{sec:4.1.2.multi}
In this section we consider \coyq{attenuation fields} $\alpha$ that contain multiple inclusions. We construct the ground truth density fields using the noisy star-shaped prior \coyq{given in \eqref{2.eq.noisy_star}
with $N_{\text{inc}}=3$. We let $\xi_0\sim \mathcal N(0,Q_{\gamma^0,\tau^0})$ with $\gamma^0 = 2.5$ and $\tau^0 = 50$, and set $\xi_i \sim \mathcal N(0,Q_{\gamma^i,\tau^i})$ with $\gamma^i = 3$ and $\tau^i = 1$ for $i=1,\dots,N_{\text{inc}}$. We} expect the inclusions to contain smooth boundaries. Drawing samples from $\alpha^{\text{noisy}} (x)$ requires sampling-and-elimination step to ensure that inclusions are inside the domain and they do not collide. We truncate the KL expansion for $\xi_0$ after 200 terms and for $\xi_i$, $i=1,\dots,N_{\text{inc}}$, after 100 terms. \coyq{In this test, we} \copc{use the noise level~$1\%$.} \coyq{The CT scan geometry is the same as in the test problem in \Cref{sec:4.1.gen}. In \Cref{fig:5.multi_inc}, we show a noisy sinogram as well as the true image. In our method, Stage 1 serves for finding the bound boxe and the center to each inclusion. Then, in Stage 2 the Gibbs samplers according to \Cref{alg:3.stage_2} are run parallel for each inclusion. We collect $10^4$ samples from the posterior distribution with an addition of $10^3$ samples in the burn-in stage.} 

\begin{figure}[tbhp]
    \centering
    \subfloat[sinogram $\boldsymbol{y}$]{\label{fig:5.muti_inc.a}\includegraphics[width=0.3\textwidth]{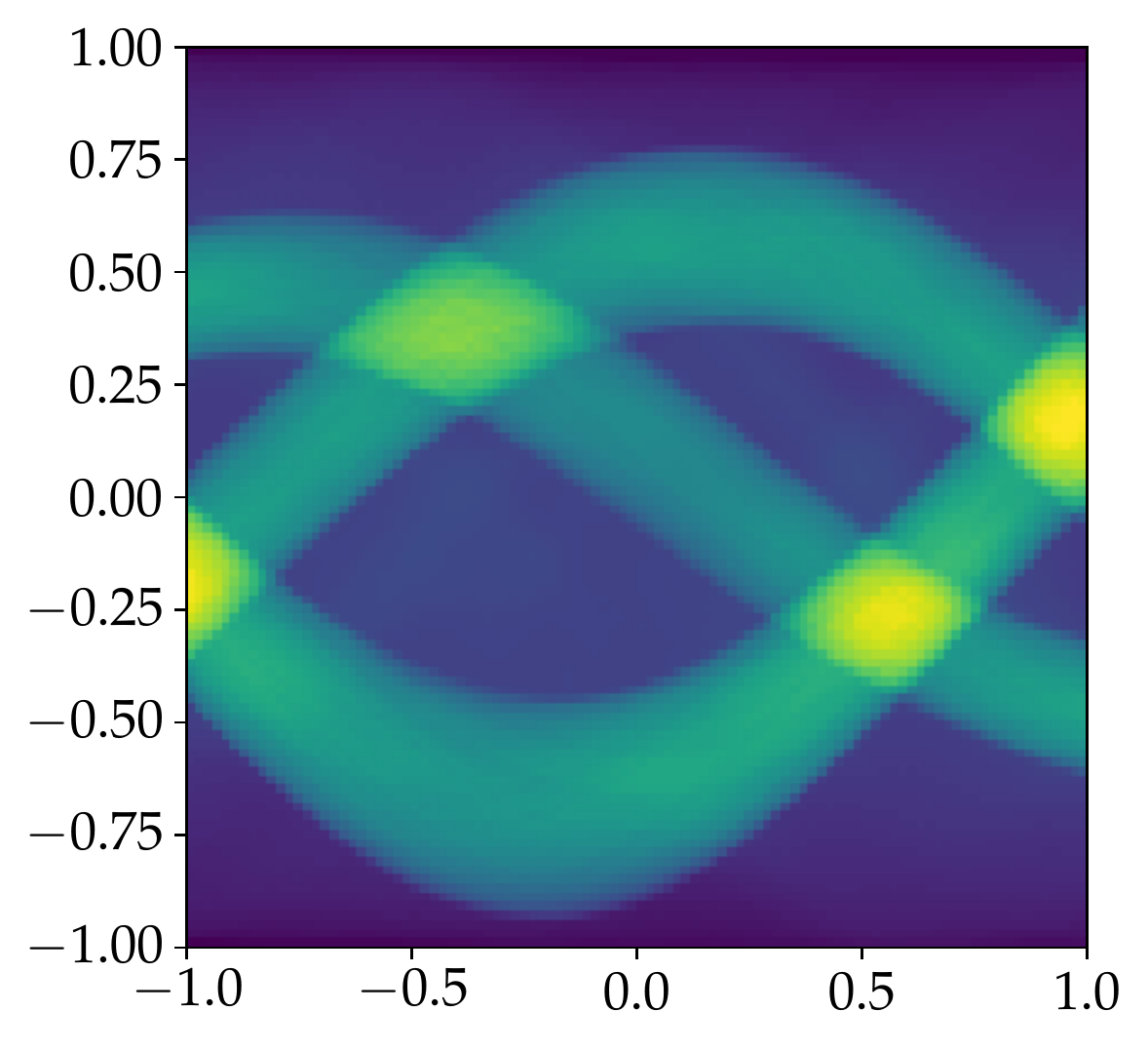} }
    \subfloat[true image $\alpha(\boldsymbol{\xi})$]{\label{fig:5.muti_inc.b}\includegraphics[width=0.3\textwidth]{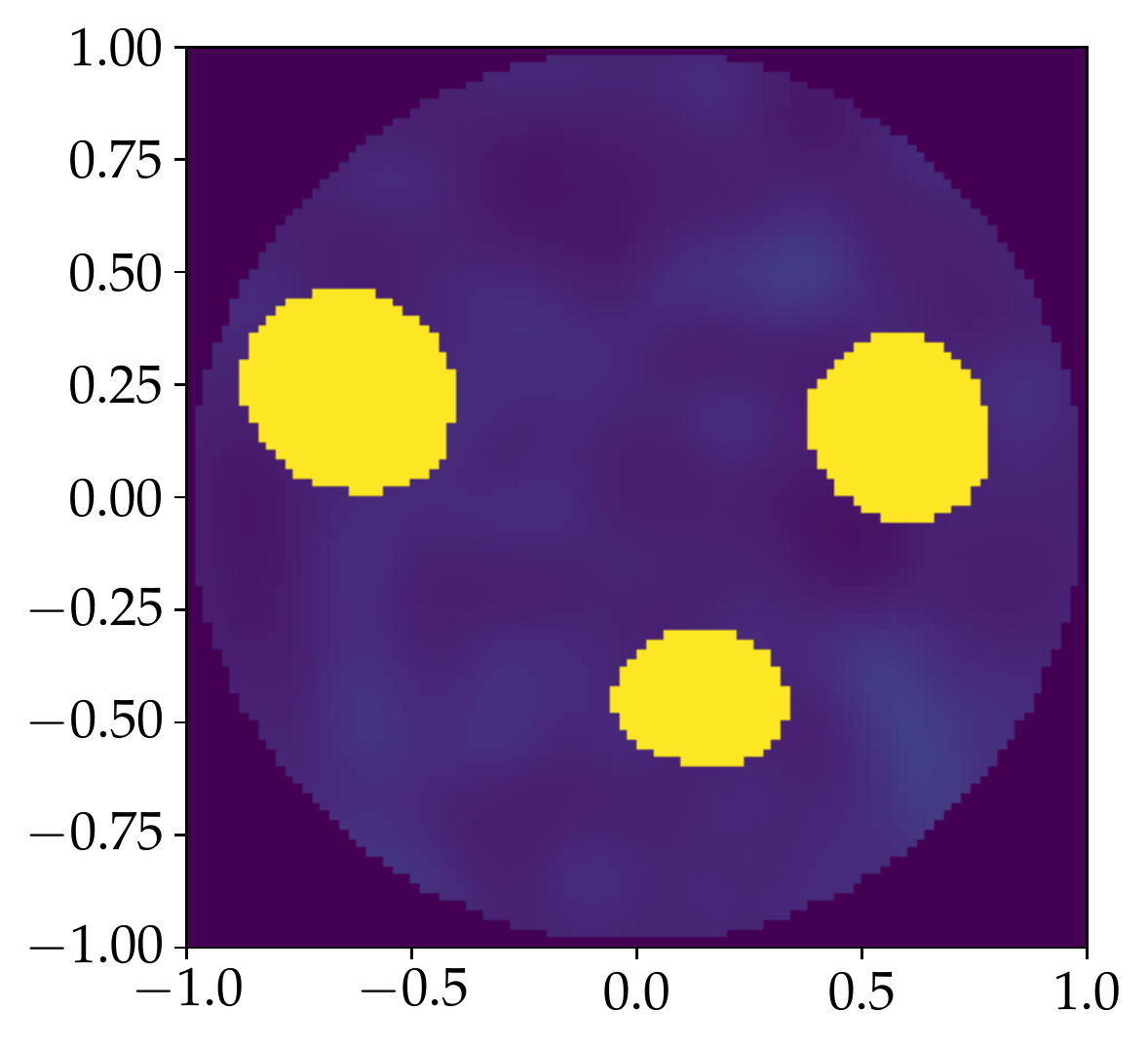} }
    \subfloat[estimated boundaries]{\label{fig:5.muti_inc.c}\includegraphics[width=0.3\textwidth]{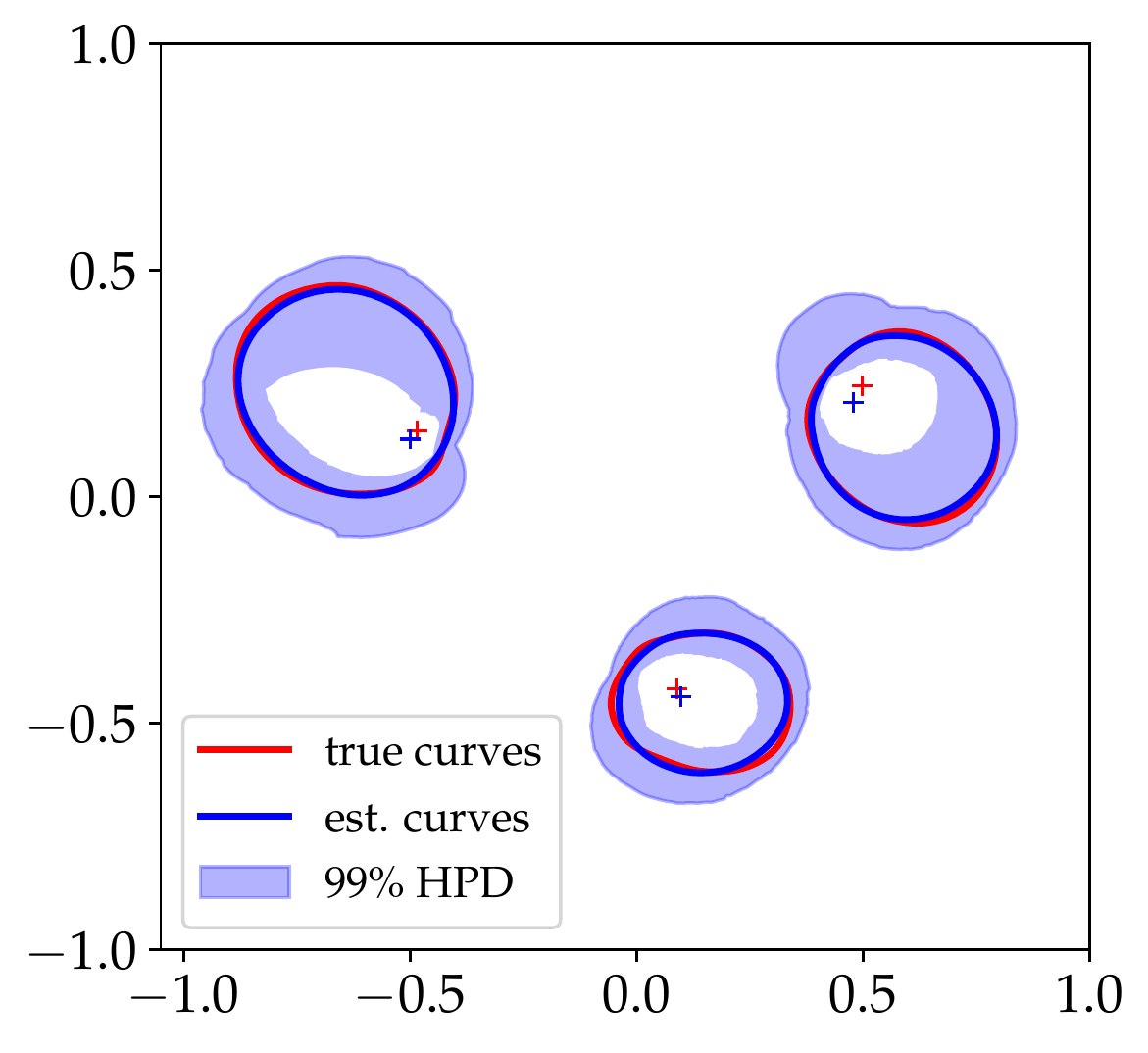} }
    \caption{Illustration of the uncertainty in detecting the boundary for an image with multiple inclusions.}
\label{fig:5.multi_inc}
\end{figure}

The estimated boundaries are presented in \Cref{fig:5.muti_inc.c}. We notice that the centers of all inclusions are estimated with high accuracy. Furthermore, the posterior mean for the boundary of the inclusion provides a precise estimate for the true boundaries. Note that the true inclusions have different size, shape and orientation (e.g., one of the inclusions is more elongated). Although the prior distribution is set identical for all inclusions, the method \copc{is able to} find such subtle differences \delete{automatically}.

We \copc{list the} global variance of $\xi_i$, $i=1,2,3$, in \Cref{tab:4.multi_inc_var}. The \coyq{HPD} band illustrated in \Cref{fig:5.muti_inc.c} is compatible with the estimated global variances. We report that \copc{we} find similar results for images with at most 5 inclusions. Given a good initial guess provided from Stage 1, we expect the method to perform as well for any number of inclusions.

\begin{table}[tbhp]
{\footnotesize
  \caption{Estimation of centers and variances.} \label{tab:4.multi_inc_var}
\begin{center}
\setlength{\tabcolsep}{10pt} 
\renewcommand{\arraystretch}{1.5}
\begin{tabular}{|c|c|c|c|c|c|c|} \hline
 & \multicolumn{6}{|c|}{\Cref{fig:5.muti_inc.c}} \\ \hline
Inclusion number & \multicolumn{2}{|c|}{$i=1$} & \multicolumn{2}{|c|}{ $i=2$} & \multicolumn{2}{|c|}{ $i=3$} \\ \hline
True centers & \multicolumn{2}{|c|}{$(-0.484 , 0.146)$} & \multicolumn{2}{|c|}{ $(0.499, 0.246)$} & \multicolumn{2}{|c|}{ $(0.090, -0.423)$} \\ \hline
Estimated centers & \multicolumn{2}{|c|}{$(-0.500, 0.128)$} & \multicolumn{2}{|c|}{ $(0.480, 0.209)$} & \multicolumn{2}{|c|}{ $(0.101, -0.439)$} \\ \hline
Variance in $\xi_i$ & \multicolumn{2}{|c|}{ 0.083 } & \multicolumn{2}{|c|}{ 0.050 } & \multicolumn{2}{|c|}{ 0.065 } \\ \hline
\end{tabular}
\end{center}
}
\end{table}

\subsubsection{Sparse and limited angle imaging} \label{sec:4.1.4.sparse}



In this section we estimate the boundary of a single inclusion in a sparse \coyq{and limited} angle imaging configuration. We compare \copc{measurement geometries} where the number of angles is $N_{\theta} = 10$ (compared to $N_{\theta} = 100$ in \Cref{sec:4.1.1.single,sec:4.1.2.multi}) and we set \copc{the angular range to be $[0,\theta_{\text{max}})$ with} $\theta_{\text{max}} \in \{180^{\circ} , 90^{\circ} , 45^{\circ} \}$. The interval $[0,\theta_{\text{max}})$ is uniformly discretized into imaging angles $\theta_j$, $j=1,\dots,N_{\theta}$. 
The regularity of the inclusion is chosen to be $\gamma^1=2.5$. The rest of the parameters for the forward problem and the sampling methods are \copc{identical to those in} \Cref{sec:4.1.1.single}.


\begin{figure}[tbhp]
    \centering
    \subfloat[$\theta_{\text{max}} = 180^{\circ}$]{\label{fig:4.sparse.g}\includegraphics[width=0.32\textwidth]{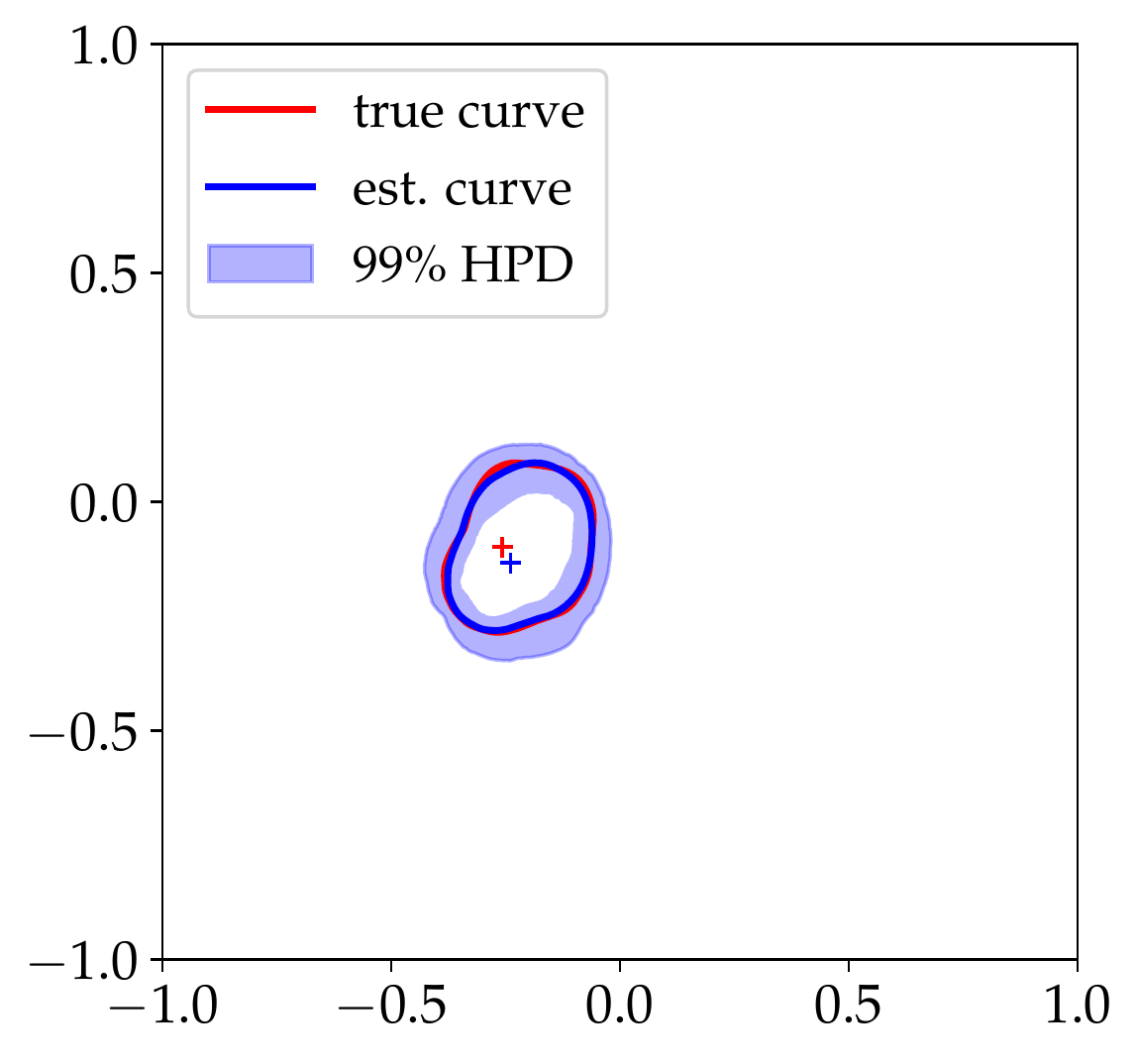} }
    \subfloat[$\theta_{\text{max}} = 90^{\circ}$]{\label{fig:4.sparse.h}\includegraphics[width=0.32\textwidth]{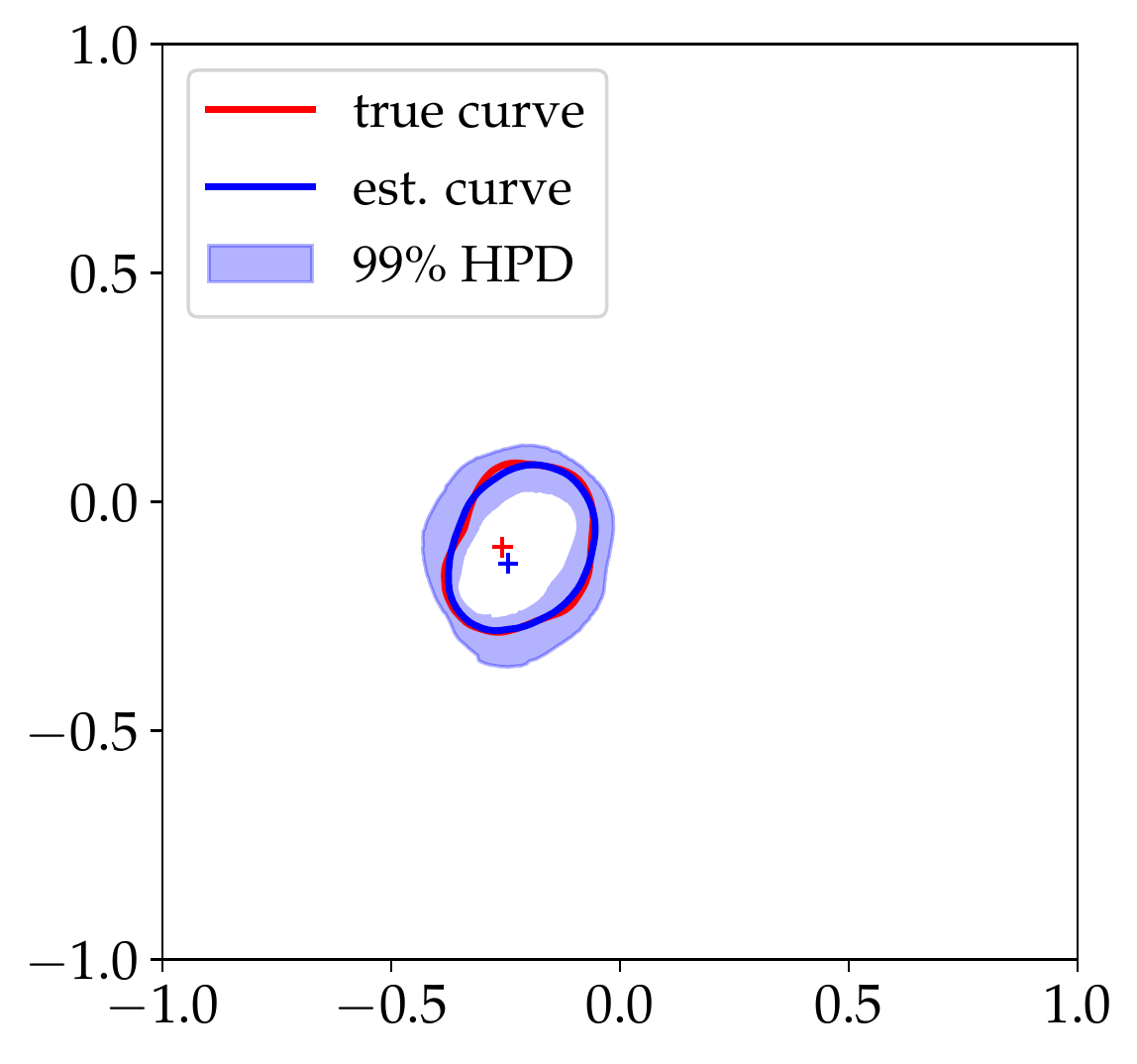} }
    \subfloat[$\theta_{\text{max}} = 45^{\circ}$]{\label{fig:4.sparse.i}\includegraphics[width=0.32\textwidth]{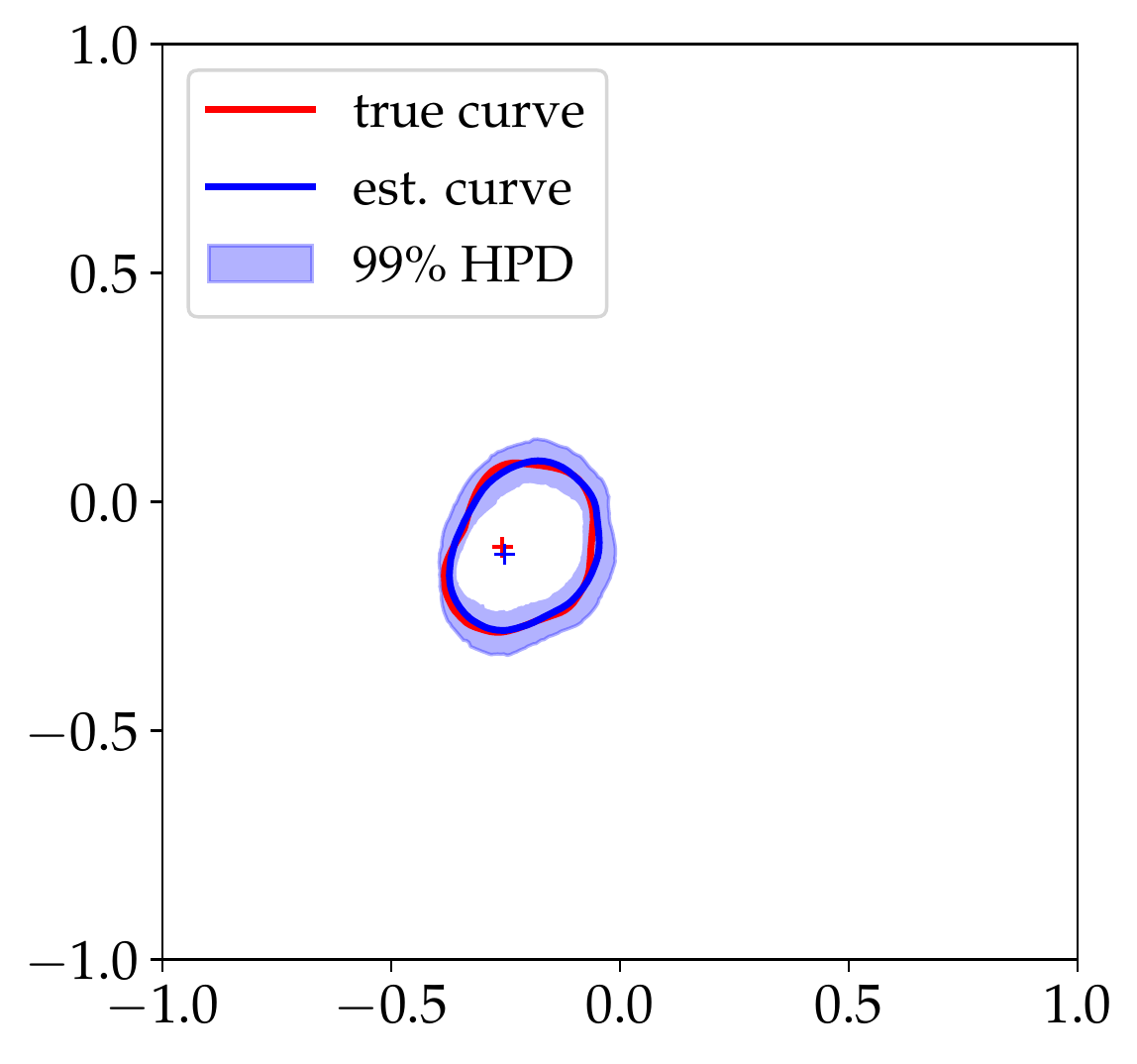} } \\
    \subfloat[$\theta_{\text{max}} = 180^{\circ}$]{\label{fig:4.sparse.j}\includegraphics[width=0.32\textwidth]{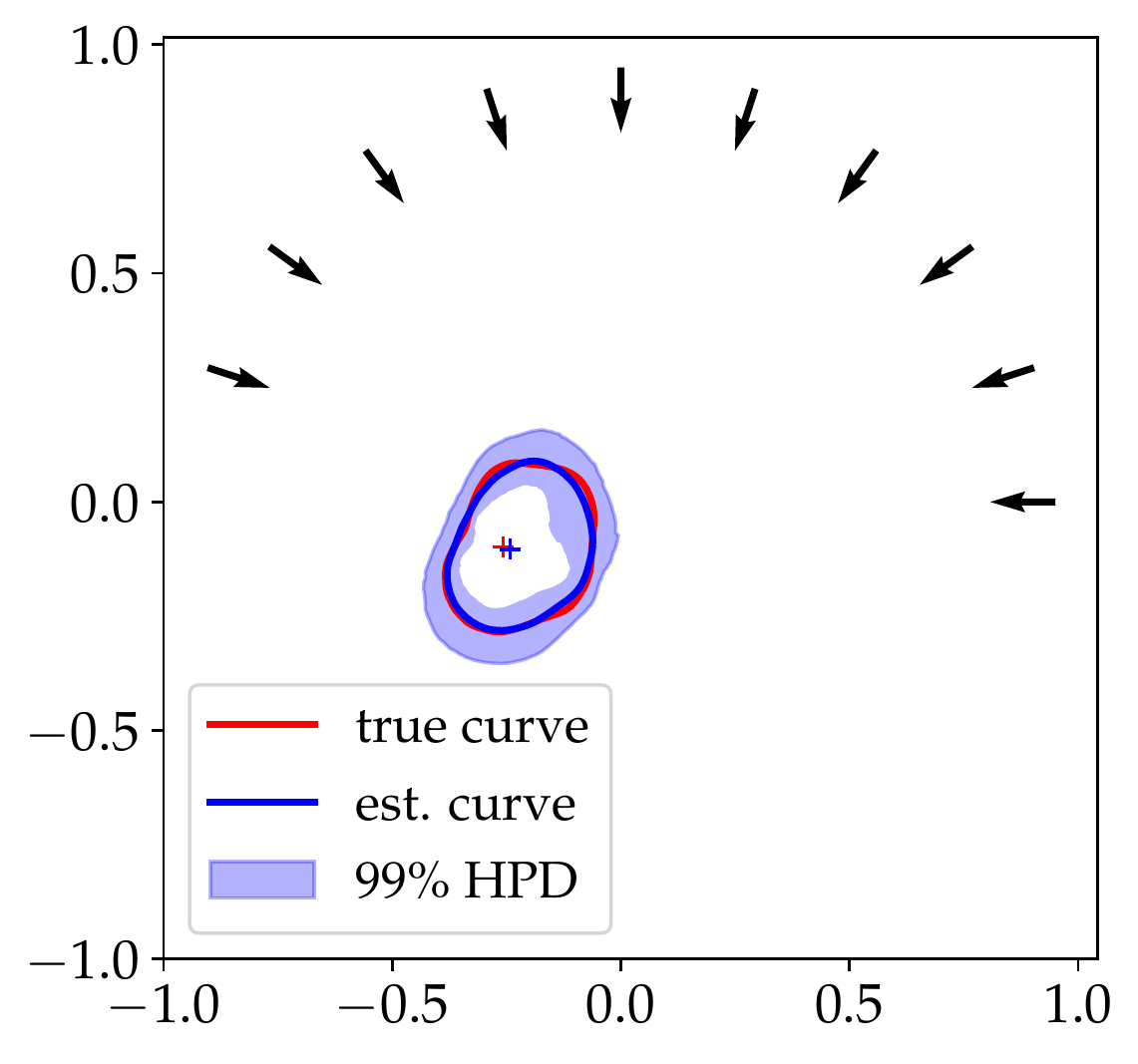} }
    \subfloat[$\theta_{\text{max}} = 90^{\circ}$]{\label{fig:4.sparse.k}\includegraphics[width=0.32\textwidth]{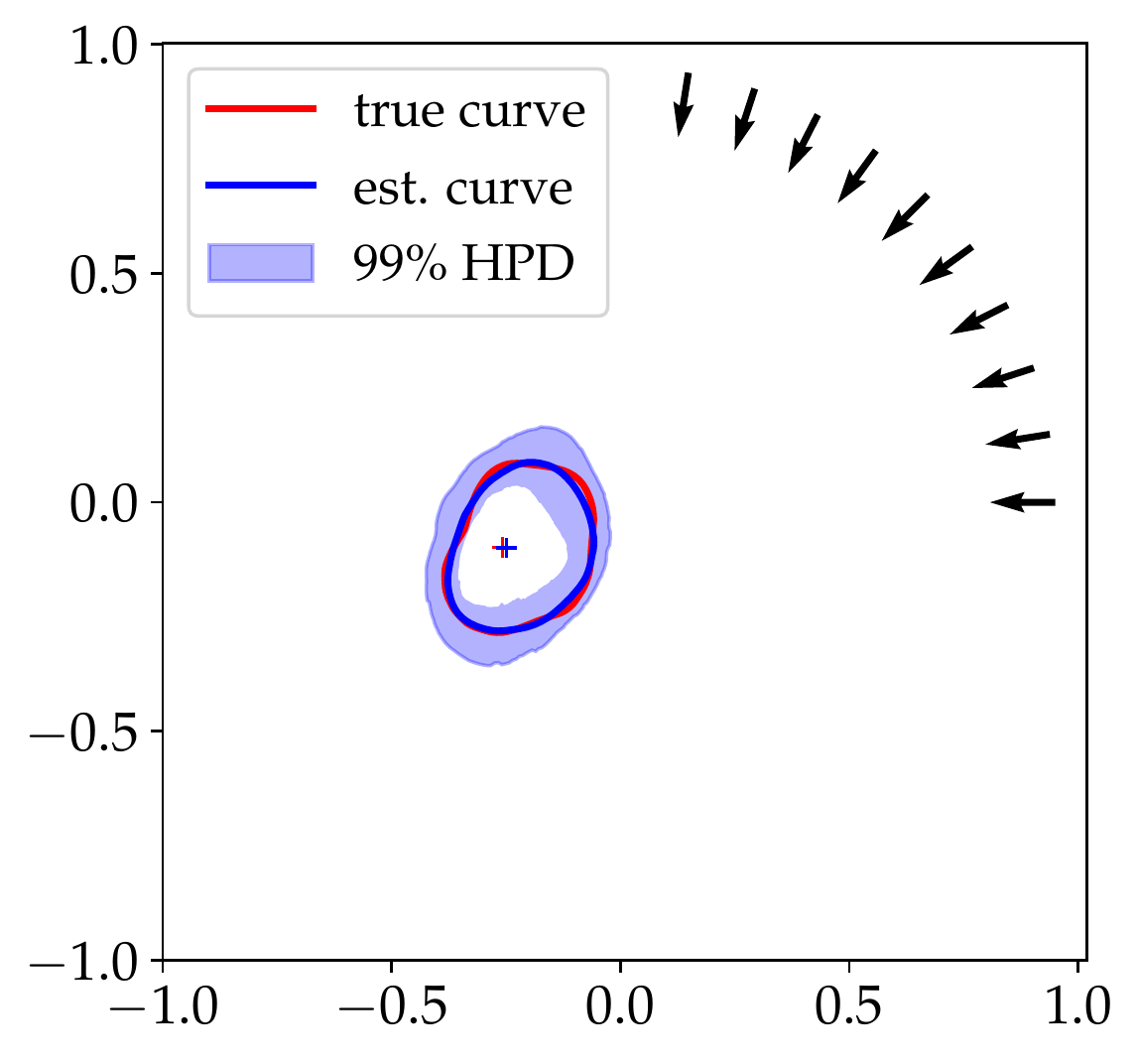} }
    \subfloat[$\theta_{\text{max}} = 45^{\circ}$]{\label{fig:4.sparse.l}\includegraphics[width=0.32\textwidth]{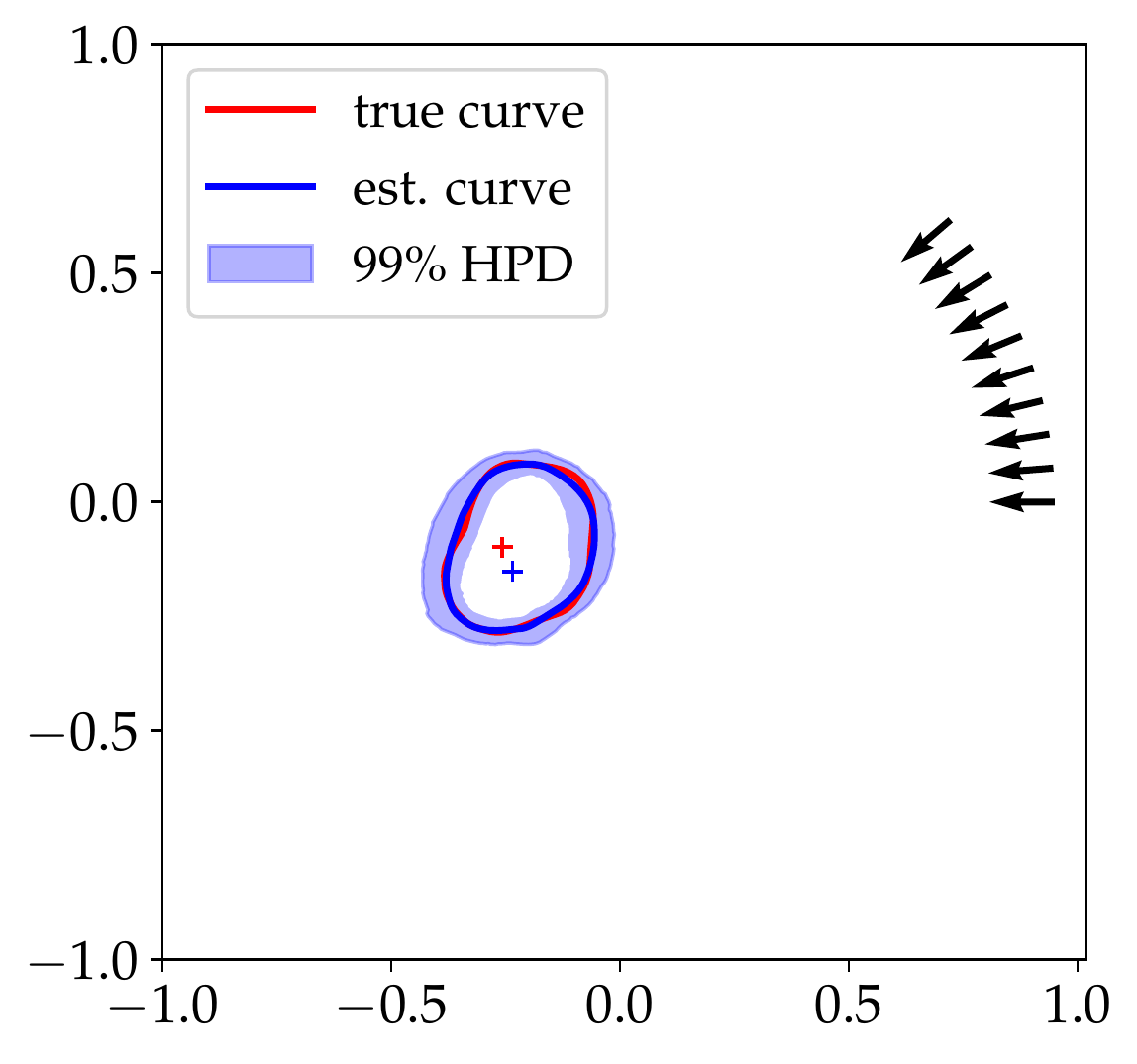} }
    \caption{Example of estimating the boundary of a single inclusion with sparse and limited angle imaging. \coyq{First row:\ a full set of angles with $N_{\theta}=100$; second row:\ sparse angles with $N_{\theta}=10$} \copc{as illustrated by the arrows.}}
\label{fig:4.sparse}
\end{figure}

To understand the effect of the sparsity and imaging angles on estimations, \coyq{we compare the results by using our method to the case where a full set of \coyq{angles} ($N_{\theta}=100$) is available with the ones from sparse and limited angle cases, see \Cref{fig:4.sparse}.} We notice that the uncertainty in the estimation of the boundary of the inclusion is significantly larger than the previous test cases. As we move from left to right, we cannot notice a qualitative difference in the amount of uncertainty in the estimated boundary. We notice the increase in the width of the uncertainty band as we move from
\copc{the top figures to the bottom figures.}
Therefore, the method is significantly more sensitive to the number of observations than the number of imaging angle.


We remark that in all test cases in this section \correct{the estimated boundary $(\bar \xi_i^1, \bar c_i^1)$} provides \copc{excellent approximations to the true boundary and the true center} of the star-shaped inclusion. Furthermore, we report that the overall behavior of the global variances were comparable with the uncertainty presented in \Cref{fig:4.sparse} as seen in \Cref{tab:4.sparse_var}.

\begin{table}[tbhp]
{\footnotesize
  \caption{Estimation of the global variance.} \label{tab:4.sparse_var}
\begin{center}
\renewcommand{\arraystretch}{1.5}
\begin{tabular}{|c|c|c|c|c|c|c|} \hline
 & \Cref{fig:4.sparse.g} & \Cref{fig:4.sparse.h}& \Cref{fig:4.sparse.i}& \Cref{fig:4.sparse.j} & \Cref{fig:4.sparse.k} & \Cref{fig:4.sparse.l} \\ \hline
$\mathbb E \| \xi_1 - \mathbb E \xi_1 \|^2_H$ & 0.0538  &  0.0485 & 0.0312 & 0.0571 & 0.0493 & 0.0320  \\ \hline
\end{tabular}
\end{center}
}
\end{table}

\subsection{Lotus Root} \label{sec:4.2}
In this section we apply our method \copc{to} tomographic X-ray data of a lotus root filled with attenuating objects \coyq{from the open data sets in \cite{bubba_tatiana_a_2016_1254204,1609.07299}. The data is taken over 360 angles around the object with the fan-beam geometry. We apply filtered back-projection (FBP) to reconstruct the image $\alpha$, which is shown in \Cref{fig:4.lotus.a} and considered as the ground truth. The target of the test is to reconstruct the boundary of a piece of circular chalk (made of calcium) placed inside the lotus root.}

 \begin{figure}[tbhp]
    \centering
    \subfloat[\coyq{the ground truth}]{\label{fig:4.lotus.a}\includegraphics[width=0.32\textwidth]{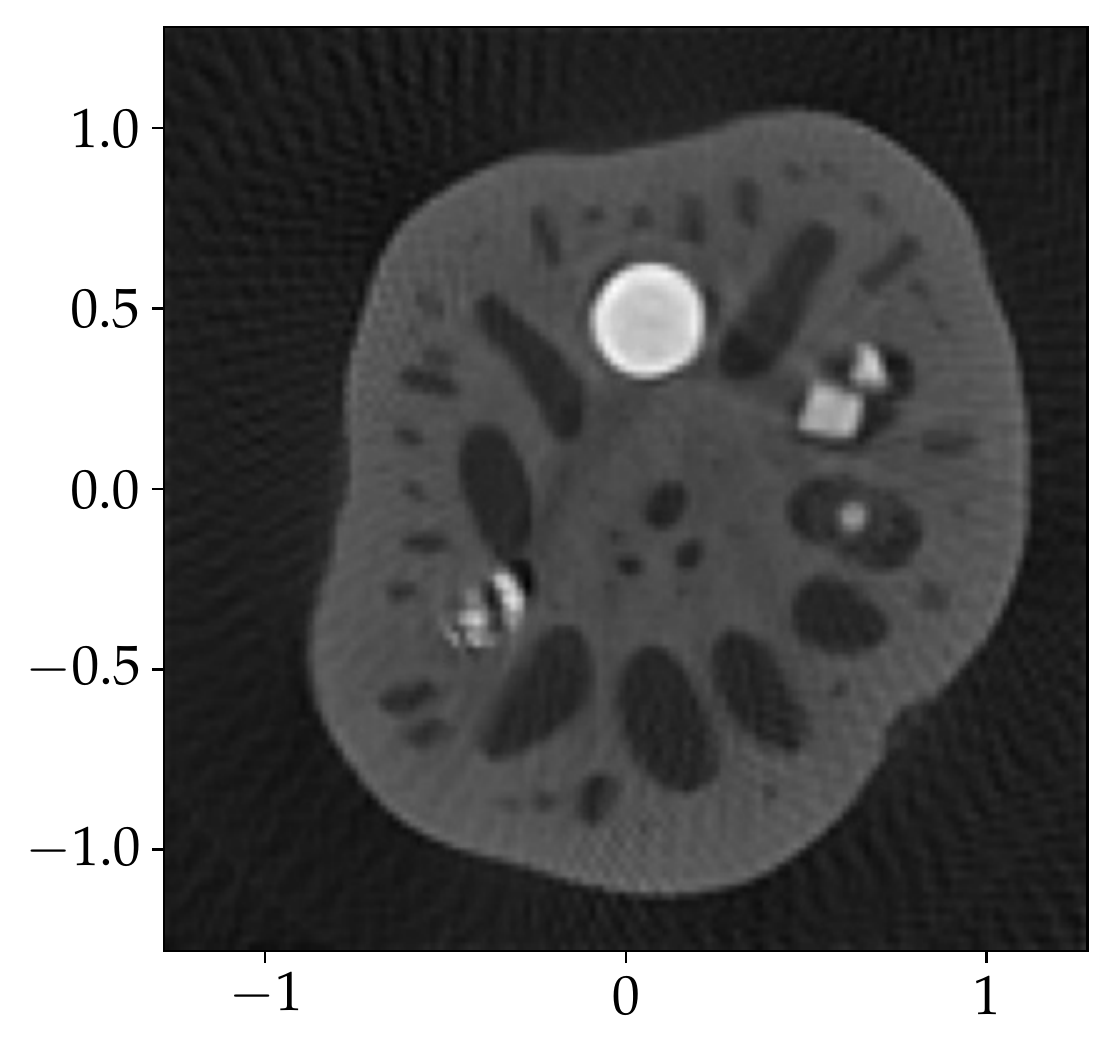} }\hspace{5mm}
    \subfloat[\coyq{the estimated boundary}]{\label{fig:4.lotus.b}\includegraphics[width=0.32\textwidth]{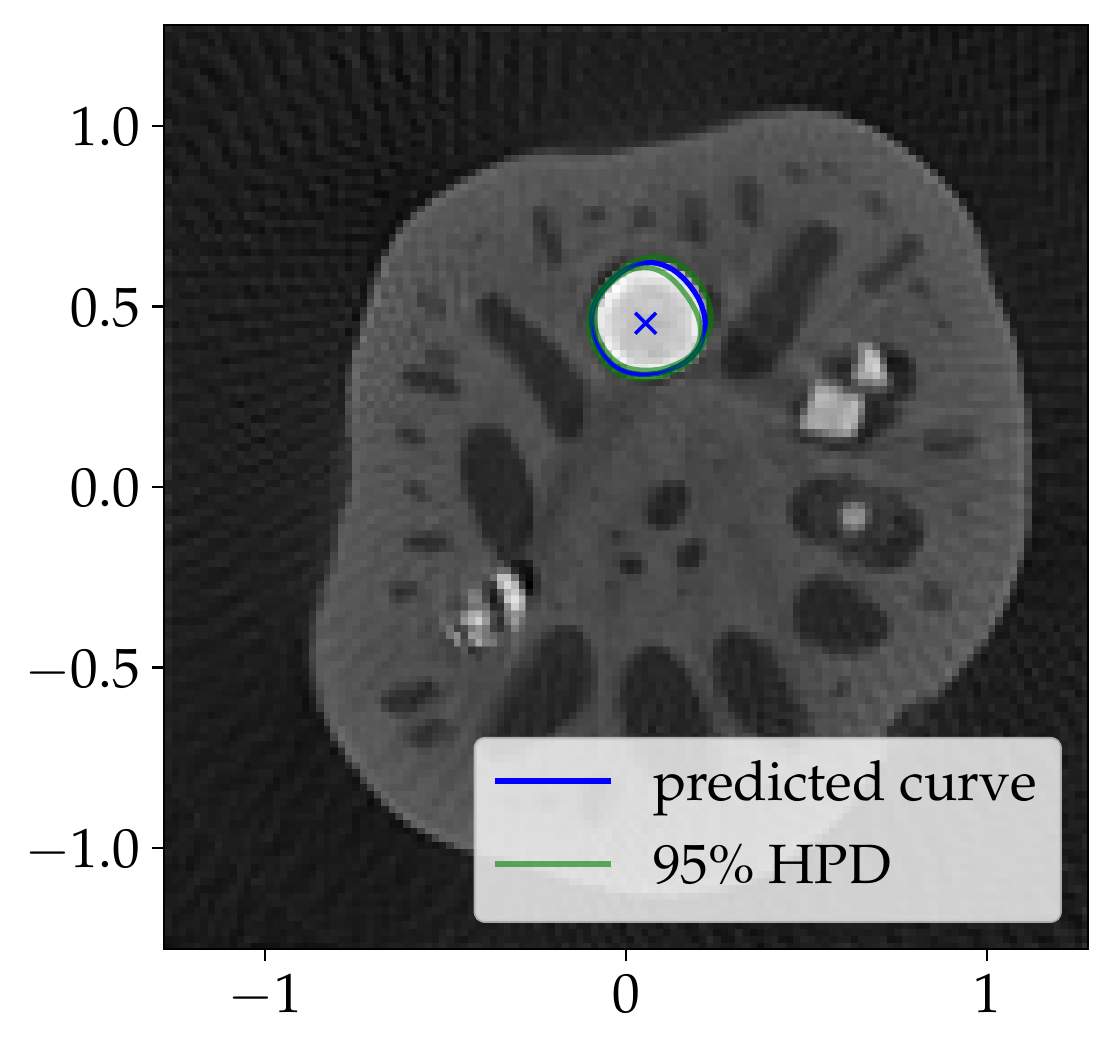} }
    \caption{Prediction of the boundary of a circular \copc{piece of} chalk in a 2D slice of a lotus root.}
\label{fig:4.lotus}
\end{figure}

\coyq{To test the performance of our method for real data, we down-sample the data with 100 equidistantly spaced angles in $[0,180^\circ)$ as $\boldsymbol{y}$. We assume that the noise level is $1\%$, and set the foreground attenuation to be $a^+ = 0.025$ and the background attenuation to be $a^- = 0.001$. In Stage 1 of our method we set the correlation length for the Mat\'ern-Whittle field to be $\ell = 0.02$ and the regularity parameter $\gamma = 3$. The rest of the modelling parameters are chosen identical to the previous tests.}

The estimated curve for the circular chalk is presented in \Cref{fig:4.lotus.b}. \coyq{Note that our method does not provide a reconstructed image for the whole domain. Here, we keep the ground truth behind the curve estimation as a visualization aid. It is clear that} the curve precisely follows the outline of the circular chalk. Furthermore, the estimated center appears approximately at the center of the inclusion, \coyq{which is consistent \copc{with the fact} that the center of a star-shaped inclusion with a perfect circular boundary is the same as the center of the mass. The HPD band around the boundary of the inclusion suggests that the reconstruction is very  accurate.} 

\section{Conclusions} \label{sec:5}

This work presents an infinite dimensional Bayesian framework for the X-ray CT problem for
\copc{goal-oriented}
estimation and uncertainty quantification of inclusion boundaries. \coyq{The proposed method reconstructs the boundaries of inclusions} with constant attenuation that can be represented as a star-shaped inclusion on a smoothly varying background. Furthermore, we provide \coyq{a HPD band around the boundary to quantify the uncertainty of the reconstruction.} The method is carried out in two stages. In Stage 1 we identify approximate locations of the inclusions by sampling the posterior constructed with the level set prior. This stage decomposes the image into regions with a single inclusion. Stage 2 comprises sampling from the posterior distribution constructed by using the star-shaped prior. The decomposition of the image \coyq{in Stage 1 guarantees the well-posedness of the reconstruction problem.}  

The numerical results show that \coyq{our method reconstructs the boundaries of the inclusions accurately and provides a reliable tool to quantify the uncertainty in the prediction.} 
Furthermore, the method consistently performs well in detecting inclusions in challenging X-ray CT scenarios (e.g., for sparse and limited angle imaging). \copc{Our results from
applying the method to a real data,
in the form of X-ray measurements} of a lotus root filled with a circular \copc{pieces of} chalk, suggest that this method can be extended to real world applications.


\section*{Acknowledgements} \correct{We thank Dr.\ Felipe Uribe for his help with the experiments in \Cref{sec:intro}. We would like to also thank the reviewers for their thoughtful comments and efforts towards improving our manuscript.} 

\appendix

\section{More on The Mat\'ern-Whittle Covariance} \label{sec:a.1}
Recall $D\subset \mathbb R^2$ is a bounded region with Lipschitz boundary. We take $H = L^2(D)$. The covariance function of two points $x,y\in D$ for the Mat\'ern-Whittle distribution \cite{nla.cat-vn2458246,rasmussen2003gaussian} is given by
\begin{equation} \label{eq:2.matern}
    \coyq{q_{\sigma,\nu,\ell}(x_1,x_2) = \sigma^2 \frac{2^{1-\nu}}{\Gamma(\nu)} \left( \frac{\|x_1-x_2\|_2}{\ell} \right)^\nu K_{\nu} \left( \frac{\|x_1-x_2\|_2}{\ell} \right).}
\end{equation}
Here, $\|\cdot\|_2$ is the Euclidean norm, $\ell>0$ is the spatial correlation length, $\nu>0$ is the smoothness parameter, $\sigma^2>0$ is the variance of the value of the field (amplitude scale). Furthermore, $K_{\nu}$ is the modified Bessel function of the second kind of order $\nu$ \cite{Lindgren2011,rasmussen2003gaussian}.

For $\nu = 1/2$ \eqref{eq:2.matern} reduces to the exponential covariance $q_{\sigma,1/2,\ell} = \sigma^2 \exp(-d/\ell)$, with $d=|x_1-x_2|$. For larger $\nu$ \copc{the} smoothness of $q_{\sigma,\nu,\ell}$ increases. One way to see this is that for $\nu = 1/2 + p$ with $p\in \mathbb N^+$ \eqref{eq:2.matern} can be written as a product of an exponential with a polynomial of order $p$ \cite{abramowitz1988handbook,rasmussen2003gaussian}. Therefore, larger $p$ contributes to higher regularity. As $p\to \infty$ the polynomial tends to an exponential function and $q_{\sigma,\nu,\ell}$ converges to the squared exponential (Gaussian) covariance function $q_{\sigma,\infty,\ell} = \sigma^2 \exp(-d^2/(2\ell^2))$.

We can construct a discrete density function by \coyq{discretizing} $D$ and computing \eqref{eq:2.matern} for each pair of points. However, this approach \copc{gives} in a full covariance matrix for large correlation length. Inversion of such covariance matrices is challenging for inverse problem applications \cite{Lindgren2011,Roininen2014}.

An alternative approach is to formulate the covariance as a differential operator of a stochastic partial differential equation (SPDE). A detailed discussion on how to construct this SPDE is beyond the scope of this paper. Below we present a brief sketch of this construction and refer the reader to \cite{Roininen2014,zhang2004inconsistent} for a detailed discussion.

The main idea is to construct an $H$-valued Gaussian random variable $\xi$ from its Fourier expansion. Let $\psi$ be white noise on $D$, i.e. $\psi$ has zero mean with the covariance operator being the Dirac's delta function $q(x_1,x_2) = \delta(\|x_1-x_2\|_2)$. Subsequently, all Fourier modes in $\widehat \psi$ will be present in the Fourier transform of $\psi$. Furthermore, let $S(w)$ be the Fourier transform of $q_{\sigma,\nu,\ell}/\sigma^2$, known as the \emph{power spectrum}. Now we can define $\xi$ through its Fourier transformation by rescaling $\widehat \psi$ with $S(w)$ as
\begin{equation} \label{eq:2.fourier_white}
    \widehat \xi := \sigma \sqrt{ S(w) } \widehat \psi.
\end{equation}
By definition, $\xi$ is a Gaussian random variable distributed according to the covariance function $q_{\sigma,\nu,\ell}$. We recover $\xi$ by applying the inverse Fourier transform to \eqref{eq:2.fourier_white} and obtain
\begin{equation} \label{eq:2.spde}
        \frac{1}{\sqrt{b(\nu)\ell^2}} (I-\ell^2 \Delta)^{(\nu + 1)/2} \xi = \psi, \qquad
        b(\nu) = \sigma^2 \frac{4 \pi \Gamma(\nu + 1)}{\Gamma(\nu)}.
\end{equation}
Here $\Delta$ is the Laplace operator and the covariance operator of $\xi$ corresponding to the covariance function $q_{\sigma,\nu,\ell}$ is given by
\begin{equation} \label{eq:2.cov_op}
    Q_{\sigma,\nu,\ell} = b\ell^2 (I-\ell^2 \Delta)^{-\nu - 1}.
\end{equation}
A simplification of \eqref{eq:2.cov_op} is presented in \cite{Dunlop2016,zhang2004inconsistent} which takes the form
\begin{equation}
    Q_{\gamma,\tau} = (\tau^2 I- \Delta)^{-\gamma}.
\end{equation}
Here, $\tau = 1/\ell >0$ controls the correlation length and $\gamma = \nu + 1$ is the smoothness parameter (see \cite{Lindgren2011} for more detail). For the covariance operator \eqref{eq:2.cov_simple} to be well defined we need to impose proper boundary conditions. See \cite{Roininen2014} for more details on types of boundary conditions.

\section{Existence and Well-posedness of the posterior measure} \label{sec:a.2}
\correct{
In this section we show that the CT problem introduced \coyq{in \Cref{sec:2.likefunc,sec:3}} is well-posed. We assume that the attenuation field $\alpha$ is bounded and strictly positive, i.e., there exist $\alpha^+, \alpha^{-} \in \mathbb R^+$ such that $ \alpha^-< \alpha < \alpha^+ $, and thus $\alpha \in L^\infty(D)$. \coyq{We use $\mathcal S (D)$ to denote the space of all such attenuation fields $\alpha$.} Recall that for an attenuation field with a single inclusion, \coyq{$F_{\text{star}}$ maps functions from $X$ to $\mathcal S(D)$. Here, $X= H\times \mathbb R^{2}$ forms a separable Hilbert space. The following proposition from \cite{1930-8337_2016_4_1007} shows that $F_{\text{star}}$ is a continuous map. We refer the reader to \cite{1930-8337_2016_4_1007} for the full proof.}

\begin{proposition} \cite{1930-8337_2016_4_1007}\label{B1}
Let $F_{\text{star}} : X\to \mathcal S(D)$ be the star-shaped map and let $\{ \xi_1^{\epsilon} \}_{\epsilon>0}$ and $\{ c_1^{\epsilon} \}_{\epsilon>0}$ be a sequence of functions in $H$ and a sequence of points in $D$, respectively, such that $\|\xi_1 - \xi_1^\epsilon \|_\infty \to 0$ and $\| c_1-c_1^{\epsilon} \|_2 \to 0$. \coyq{Then, we have} $F_{\text{star}}[ (\xi_1^\epsilon,c_1^\epsilon)] \to F_{\text{star}}[ (\xi_1,c_1) ]$ in measure.
\end{proposition}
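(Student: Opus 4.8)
The plan is to observe that, under the perturbation $(\xi_1,c_1)\mapsto(\xi_1^\epsilon,c_1^\epsilon)$, the only object that changes is the inclusion region $D_1$, and then to bound the Lebesgue measure of the symmetric difference of these regions by separating the effect of the radial field $\xi_1$ from the effect of the center $c_1$.

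First I would reduce the claim to a statement about symmetric differences. Since $F_{\text{star}}[(\xi,c)]$ takes only the values $a^-$ and $a^+$, one has $\|F_{\text{star}}[(\xi_1^\epsilon,c_1^\epsilon)]-F_{\text{star}}[(\xi_1,c_1)]\|_{L^1(D)}=(a^+-a^-)\,m\big(D_1(\xi_1^\epsilon,c_1^\epsilon)\,\triangle\,D_1(\xi_1,c_1)\big)$, so it suffices to show $m\big(D_1(\xi_1^\epsilon,c_1^\epsilon)\,\triangle\,D_1(\xi_1,c_1)\big)\to 0$; this already gives convergence in $L^1(D)$ and a fortiori in measure. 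Writing $\widetilde D_1(\xi,c)=\{x\in\mathbb R^2:\|x-c\|_2<e^{\xi(\vartheta(x-c))}\}$ for the unclipped star-shaped region, so that $D_1(\xi,c)=\widetilde D_1(\xi,c)\cap D$, and using the identity $(A\cap D)\triangle(B\cap D)=(A\triangle B)\cap D$, the clipping by $D$ can be dropped. A triangle inequality for symmetric differences then splits the estimate into a radial and a translational part:
\[
m\big(\widetilde D_1(\xi_1^\epsilon,c_1^\epsilon)\triangle\widetilde D_1(\xi_1,c_1)\big)\le m\big(\widetilde D_1(\xi_1^\epsilon,c_1^\epsilon)\triangle\widetilde D_1(\xi_1,c_1^\epsilon)\big)+m\big(\widetilde D_1(\xi_1,c_1^\epsilon)\triangle\widetilde D_1(\xi_1,c_1)\big).
\]

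For the radial term I fix the common center $c_1^\epsilon$ and pass to polar coordinates $x=c_1^\epsilon+\rho\,\omega(\vartheta)$, $\omega(\vartheta)=(\cos\vartheta,\sin\vartheta)$, $dx=\rho\,d\rho\,d\vartheta$. With $r(\vartheta)=e^{\xi_1(\vartheta)}$ and $r^\epsilon(\vartheta)=e^{\xi_1^\epsilon(\vartheta)}$, a point lies in the symmetric difference precisely when $\rho$ is between $r(\vartheta)$ and $r^\epsilon(\vartheta)$, so the term equals $\tfrac12\int_0^{2\pi}|r^\epsilon(\vartheta)^2-r(\vartheta)^2|\,d\vartheta\le\pi\,\|r^\epsilon-r\|_\infty\big(\|r^\epsilon\|_\infty+\|r\|_\infty\big)$. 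Since $\xi_1$ is essentially bounded and $\|\xi_1^\epsilon-\xi_1\|_\infty\to 0$, the norms $\|r^\epsilon\|_\infty,\|r\|_\infty$ stay uniformly bounded and, $\exp$ being Lipschitz on the relevant interval, $\|r^\epsilon-r\|_\infty\le C\,\|\xi_1^\epsilon-\xi_1\|_\infty\to 0$; note this bound does not involve the center. For the translational term I use that $\widetilde D_1(\xi_1,c_1^\epsilon)$ is the translate of $\widetilde D_1(\xi_1,c_1)$ by $v:=c_1^\epsilon-c_1$, hence it equals $\|\mathbbm{1}_{A+v}-\mathbbm{1}_A\|_{L^1(\mathbb R^2)}$ with $A=\widetilde D_1(\xi_1,c_1)$ a bounded measurable set, and this tends to $0$ as $v\to 0$ by continuity of translation in $L^1$. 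Adding the two bounds proves $F_{\text{star}}[(\xi_1^\epsilon,c_1^\epsilon)]\to F_{\text{star}}[(\xi_1,c_1)]$ in measure.

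The main obstacle is the translational term, i.e.\ controlling the symmetric difference between $A$ and its small translate. Invoking $L^1$-continuity of translations disposes of it with no extra hypotheses, but if a quantitative, discretization-independent rate is wanted one instead uses that for $\gamma>1$ the field $\xi_1$ is Lipschitz, so $\partial A$ has finite length and $A\triangle(A+v)$ lies in a tube of width $|v|$ about $\partial A$, giving $m(A\triangle(A+v))\le C\,|v|\,\mathcal H^1(\partial A)$; this is the only step requiring regularity of $\xi_1$ beyond continuity. The radial term is an elementary polar-coordinate computation, and the reduction in the second paragraph is purely set-theoretic.
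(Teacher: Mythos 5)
Your proof is correct, but note that the paper does not actually prove \Cref{B1}: it imports the statement from \cite{1930-8337_2016_4_1007} and explicitly refers the reader there for the proof, so there is no in-paper argument to match. Your argument is a clean, self-contained version of the standard one: reduce convergence in measure of the two-valued functions to smallness of $m\bigl(D_1(\xi_1^\epsilon,c_1^\epsilon)\,\triangle\,D_1(\xi_1,c_1)\bigr)$, strip off the intersection with $D$, and split the symmetric difference into a radial part (handled exactly by the polar-coordinate identity $\tfrac12\int_0^{2\pi}|(r^\epsilon)^2-r^2|\,d\vartheta$) and a translation part (handled by $L^1$-continuity of translation). Both estimates are sound; the only implicit hypothesis you should make explicit is that $\xi_1\in L^\infty(\mathbf T)$, so that $r=e^{\xi_1}$ and the $r^\epsilon$ are uniformly bounded and $\exp$ is Lipschitz on the relevant compact interval --- this is guaranteed in the paper's setting, where \Cref{remark:2.star_shaped} assumes $\xi_1$ Lipschitz (i.e.\ $\gamma>1$), and is in any case forced by $\|\xi_1-\xi_1^\epsilon\|_\infty<\infty$ together with $F_{\text{star}}$ mapping into the bounded fields $\mathcal S(D)$. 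Your closing observation is also a genuine (if modest) strengthening relative to the cited source: the qualitative convergence in measure needs only boundedness of $\xi_1$ and uniform convergence, not Lipschitz regularity; the Lipschitz assumption buys the quantitative rate $m(A\triangle(A+v))\lesssim |v|\,\mathcal H^1(\partial A)$ for the translation term, which is what one would want for, e.g., Hellinger-stability constants, but is not needed for \Cref{B1} itself.
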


Let $\mathcal A$ be the Borel $\sigma$-algebra constructed on $X$ with respect to the norm
\begin{equation}
    \coyq{ \| (\xi_1,c_1) \|_{X} := \| \xi_1 \|_{H} + \| c_1 \|_2.} 
\end{equation}
To define a probability measure on $X$ we assume that $\xi_1$ and $c_1$ are independent random variables such that their joint probability measure takes the form $\mu_0 = \mu^1_0 \otimes \mu^2_0$, with $\mu^1_0 = \mathcal N (0,\mathcal C)$ a Gaussian measure on $H$ and $\mu^2_0$ the Lebesgue measure on $D$. Now the triplet $(X,\mathcal A,\mu_0)$ forms a probability space. Furthermore, We define the probability measure on $\mathcal S(D)$ to be the push-forward measure $\mu \circ F_{\text{star}}^{-1}$.

To show Lipschitz-Hellinger well-posedness of the CT inverse problem with the star-shaped prior we follow the framework in \cite{stuart_2010}. The structure of the proofs follows \cite{1930-8337_2016_4_1007} closely where we modify the proofs for a forward model constructed with the Radon transform. \coyq{We first need to prove that the likelihood function constructed with the Radon transform is bounded, see \Cref{prop:continuous}. We provide the proof here.}

\begin{proof}[Proof of \Cref{prop:continuous}] $ $\\
\begin{enumerate}[(i)]
\item The Radon transform $G$ is a bounded linear operator, see Chapters 6.2 and 6.6 in \cite{Hansen2021}. Therefore, it is continuous and we can find \coyq{a constant} $C >0$ such that for $\alpha \in \mathcal S(D)$ we have
\begin{equation} \label{eq:radon_bound}
    \coyq{\| G\alpha \|_2} \leq C\| \alpha \|_{L^\infty(D)} = C\alpha^+.
\end{equation}
\coyq{Representing $\alpha$ by the star-shaped mapping defined in \Cref{eq:2.star_shape}, we have} $\| F_{\text{star}}( (\xi_1,c_1) ) \|_{L^{\infty}(D)} \leq \alpha^+$. It follows
\coyq{\begin{equation}
\begin{aligned}
    \Phi((\xi_1,c_1), \boldsymbol{y}) &\leq \frac12\left(\| \boldsymbol{y} \|^2_{\Sigma_n} + \| G\circ F_{\text{star}}(\xi_1,c_1) \|^2_{\Sigma_n}\right) \\
    &\leq \frac12 \left(\| \boldsymbol{y} \|^2_{\Sigma_n} + C \| F_{\text{star}}(\xi_1,c_1) \|_{L^{\infty}(D)}\right)  \\
    & \leq \frac12 \left(\rho^2 + C\alpha^+\right),
\end{aligned}
\end{equation}
where $\| \boldsymbol{y} \|_{\Sigma_n}\leq\rho$. We define $K(\rho) := \frac12\left(\rho^2 + C\alpha^+\right)$.}
\item \coyq{In \Cref{B1} we show} that $F_{\text{star}}$ is a continuous map. Furthermore, we discussed in part $(i)$ that $G$ is also a continuous map. Therefore, for a fixed $\boldsymbol{y}\in \mathbb R^N$, $\Phi(\cdot;\boldsymbol y)$ is a composition of continuous maps. This concludes the proof.
\item Let $(\xi_1,c_1) \in X$ and $\boldsymbol{y}_1,\boldsymbol{y}_2\in \mathbb R^N$ such that $\| \boldsymbol{y}_1 \|_{\Sigma_n},\|\boldsymbol{y}_2 \|_{\Sigma_n} \leq \rho $. We have
\coyq{\begin{equation}
    \begin{aligned}
         |\Phi((\xi_1,c_1),\boldsymbol{y}_1) - &\Phi((\xi_1,c_1),\boldsymbol{y}_2)|\\
         & = \frac12| \langle \boldsymbol{y}_1 - \boldsymbol{y}_2, \boldsymbol{y}_1 + \boldsymbol{y}_2 - 2\mathcal G(\xi_1,c_1) \rangle_{\Sigma_n} | \\
         &\leq \frac12\| \boldsymbol{y}_1 - \boldsymbol{y}_2 \|_{\Sigma_n}\left \| \boldsymbol{y}_1 + \boldsymbol{y}_2 - 2\mathcal G(\xi_1,c_1) \right\| \\
         &\leq\frac12 \| \boldsymbol{y}_1 - \boldsymbol{y}_2 \|_{\Sigma_n} \left(  \|\boldsymbol{y}_1\|_{\Sigma_n} + \|\boldsymbol{y}_2\|_{\Sigma_n} + 2 \|\mathcal G(\xi_1,c_1)\|_{\Sigma_n} \right) \\
         & \leq (\rho + C\alpha^+ ) \|\boldsymbol y_1 - \boldsymbol y_2 \|_{\Sigma_n}.
    \end{aligned}
\end{equation}}
We define $M(\rho) : = (\rho + C\alpha^+ )$.
\end{enumerate}
\end{proof}

Now we show that the posterior measure $\mu^{\boldsymbol{y}}$ is a well-defined measure on $X$.
\begin{theorem}
The posterior measure $\mu^{\boldsymbol{y}}$ \Cref{eq:2.posterior} is well-defined.
\end{theorem}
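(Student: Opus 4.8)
The plan is to show that \eqref{eq:2.posterior} genuinely defines a probability measure on $X$, which reduces to verifying that the normalization constant $Z$ is finite and strictly positive and that the stated expression is a legitimate Radon--Nikodym density with respect to $\boldsymbol{\mu}_0$. This is precisely the content of the Bayes' theorem quoted as \Cref{thm42}, whose hypotheses are exactly conditions (i)--(iii) of \Cref{prop:continuous}, which we have just established. So the proof is essentially the invocation of \Cref{thm42}, and the only work is to make the finiteness and positivity of $Z$ explicit in our setting.

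First I would record that $\Phi \ge 0$ by definition \eqref{eq:2.loglike}, so $0 \le \exp(-\Phi((\xi_1,c_1);\boldsymbol{y})) \le 1$ for every $(\xi_1,c_1) \in X$. By condition (ii) of \Cref{prop:continuous}, $\Phi(\cdot;\boldsymbol{y})$ is $\mu_0$-a.s.\ continuous, hence $\mu_0$-measurable, so $\exp(-\Phi(\cdot;\boldsymbol{y}))$ is a nonnegative, $\mu_0$-measurable function bounded by $1$; therefore $Z = \int_X \exp(-\Phi)\,\mu_0(d(\xi_1,c_1)) \le \mu_0(X) = 1 < \infty$, and the integral defining $Z$ is well posed. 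For strict positivity I would use the uniform upper bound extracted in the proof of \Cref{prop:continuous}(i): for any $\boldsymbol{y}$ with $\|\boldsymbol{y}\|_\Sigma \le \rho$ one has $\Phi((\xi_1,c_1);\boldsymbol{y}) \le K(\rho) := \tfrac12(\rho^2 + C\alpha^+)$ for \emph{every} $(\xi_1,c_1) \in X$ --- the key point being that this bound is uniform in $(\xi_1,c_1)$. Hence $\exp(-\Phi((\xi_1,c_1);\boldsymbol{y})) \ge e^{-K(\rho)} > 0$ pointwise, and integrating against the probability measure $\mu_0$ gives $Z \ge e^{-K(\rho)}\,\mu_0(X) = e^{-K(\rho)} > 0$. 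Since $\rho = \|\boldsymbol{y}\|_\Sigma$ is finite for any $\boldsymbol{y} \in \mathbb{R}^N$, this holds for all $\boldsymbol{y}$, not merely $\boldsymbol{y}$-a.s. Thus $0 < Z \le 1$, the density $\tfrac1Z \exp(-\Phi)$ is nonnegative, $\mu_0$-integrable, and of total mass $1$, so $\boldsymbol{\mu}^{\boldsymbol{y}}$ is a well-defined probability measure with $\boldsymbol{\mu}^{\boldsymbol{y}} \ll \boldsymbol{\mu}_0$. The level-set case follows verbatim by freezing $c$ to a constant, recovering the classical results of \cite{Dunlop2016,stuart_2010}.

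The one genuinely delicate point --- and the step I would be most careful about --- is measurability: because $F_{\text{star}}$ (\Cref{B1}) is continuous only \emph{in measure} rather than pointwise, $\Phi(\cdot;\boldsymbol{y})$ is only $\mu_0$-almost surely continuous, so one must pass to the $\mu_0$-completion of $\mathcal B(X)$ (or redefine $\Phi$ on a $\mu_0$-null set) before asserting that $\exp(-\Phi)$ is integrable and that $Z$ and $d\boldsymbol{\mu}^{\boldsymbol{y}}/d\boldsymbol{\mu}_0$ are meaningful; this is exactly the reason the abstract framework of \cite{Dashti2017} is formulated with a.s.\ continuity rather than continuity. Once this is handled, nothing further is needed: the positivity bound above does not rely on a.s.\ continuity, and the remaining assertions are the direct application of \Cref{thm42}. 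Finally, I would note that conditions (i)--(iii) of \Cref{prop:continuous} are also exactly what is required for the Lipschitz--Hellinger well-posedness statement of \cite{Dashti2017,stuart_2010}, so the stability of $\boldsymbol{\mu}^{\boldsymbol{y}}$ under perturbations of $\boldsymbol{y}$ follows from the same ingredients.
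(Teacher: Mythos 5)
Your proof is correct and follows essentially the same route as the paper: both reduce the claim to the abstract Bayes' theorem of \cite{Dashti2017} and establish $Z>0$ from the uniform upper bound $\Phi\leq K(\rho)=\tfrac12(\rho^2+C\alpha^+)$ together with measurability from $\mu_0$-a.s.\ continuity. The only cosmetic difference is that you integrate the lower bound $e^{-K(\rho)}$ over all of $X$ (exploiting that $K$ here does not depend on $\|(\xi_1,c_1)\|_X$), whereas the paper restricts to a product set $B_H\times D$ of positive prior measure; both are valid, and your explicit remarks on finiteness of $Z$ and on the measurability issue are welcome additions.
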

\begin{proof}
We showed in \Cref{prop:continuous} that $\Phi(\cdot;\boldsymbol{y})$ is $\mu_0$-a.s. continuous and $\Phi((\xi_1,c_1),\cdot)$ is locally Lipschitz. This is sufficient condition for $\Phi$ to be jointly continuous with respect to $\mu_0 = \mu_0^1 \otimes \mu_0^2$, see the proof of theorem 3.8 in \cite{1930-8337_2016_4_1007}. Therefore, it is left to show that the normalization constant $Z$ in \Cref{eq:2.posterior} is bounded away from zero. Define $B:= B_H \times D \subset X$, where $B_H$ is an open ball in $H$. \coyq{For $\boldsymbol{y}\in \mathbb R^N$, with $\| \boldsymbol{y} \|_{\Sigma_n}\leq\rho$} we have
\begin{equation}
\begin{aligned}
    \int_X \exp( -\Phi((\xi_1,c_1);\boldsymbol{y}) )\  \mu_0(d\xi) & \geq \int_B \exp( -\Phi((\xi_1,c_1);\boldsymbol{y}) )\  \mu_0(d\xi) \\
    & \geq \int_B \exp( -K(\rho) )\  \mu_0(d\xi) \\
    &\coyq{= \exp( -K(\rho) ) \mu_0(B) > 0.}
\end{aligned}
\end{equation}
Here we used positivity of the exponential function in the first inequality, and condition $(i)$ in \Cref{prop:continuous}. Note that since $\mu_0^1$ is Gaussian and $B_H$ is an open set then the $\mu_0^1(B_H)>0$. Therefore, $\mu_0(B) = \mu_0^1(B_H)\mu_0^2(D) >0$.
\end{proof}
}

Let $\mu_1$ and $\mu_2$ be two posterior probability measures defined on $(H,\mathcal B(H))$ such that they are both absolutely continuous with respect to the prior measure $\mu_0$. The Hellinger distance \cite{pardo2018statistical} between $\mu_1$ and $\mu_2$ \copc{is} given by
\begin{equation}
    d_{\text{Hell}}(\mu_1,\mu_2) = \sqrt{ \frac{1}{2}  \int_{H} \sqrt{ \frac{d\mu_1}{d\mu_0}(\xi) } - \sqrt{ \frac{d\mu_2}{d\mu_0} (\xi) } \ \mu_0( d\xi )  }.
\end{equation}

\correct{
\begin{theorem} \label{thm:wellposedness} \cite{Dashti2017}
Let $\Phi$ be the negative log-likelihood defined \eqref{eq:2.loglike} satisfying assumptions (i)-(iii) in \Cref{prop:continuous} and $\mu_0$ be the prior measure defined on $(X,\mathcal A)$. Furthermore, let $\boldsymbol{y},\boldsymbol{y}'\in \mathbb R^N$ be two observation vector such that $\| \boldsymbol{y}-\boldsymbol{y}' \|_{\Sigma_n} \leq \rho$ for a fixed $ 0\leq \rho<\infty$. Then we can find $C=C(\rho)>0$ such that
\begin{equation}
    d_{\text{Hell}}(\mu^{\boldsymbol{y}},\mu^{\boldsymbol{y}'}) \leq C \| \boldsymbol{y} - \boldsymbol{y}' \|_{\Sigma_n}.
\end{equation}
\end{theorem}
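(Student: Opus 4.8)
The plan is to invoke the general Lipschitz–Hellinger well-posedness machinery of \cite{Dashti2017,stuart_2010}, for which the three hypotheses have already been verified in \Cref{prop:continuous}; what remains is to reproduce the standard estimate adapted to our notation. First I would fix $\boldsymbol{y},\boldsymbol{y}'\in\mathbb R^N$ with $\|\boldsymbol{y}\|_{\Sigma_n},\|\boldsymbol{y}'\|_{\Sigma_n}\le\rho$ (if only $\|\boldsymbol{y}-\boldsymbol{y}'\|_{\Sigma_n}\le\rho$ is assumed, one replaces $\rho$ by a bound depending on $\|\boldsymbol{y}\|_{\Sigma_n}$ as well, which is harmless). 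Denote by $Z=Z(\boldsymbol{y})$ and $Z'=Z(\boldsymbol{y}')$ the two normalization constants from \Cref{thm42}; the bound $Z,Z'\ge \exp(-K(\rho))\mu_0(B)>0$ established in the previous theorem is the key quantitative ingredient, so I would record it first.

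Next I would split the Hellinger distance in the usual way. Writing the Radon–Nikodym densities $d\mu^{\boldsymbol{y}}/d\mu_0 = Z^{-1}\exp(-\Phi((\boldsymbol{\xi},\boldsymbol{c});\boldsymbol{y}))$ and similarly for $\boldsymbol{y}'$, one has
\begin{equation}
2\,d_{\text{Hell}}(\mu^{\boldsymbol{y}},\mu^{\boldsymbol{y}'})^2
\le I_1 + I_2,
\end{equation}
where
\begin{equation}
I_1 = \frac{1}{Z}\int_X\left(\exp\!\big(-\tfrac12\Phi((\boldsymbol{\xi},\boldsymbol{c});\boldsymbol{y})\big)-\exp\!\big(-\tfrac12\Phi((\boldsymbol{\xi},\boldsymbol{c});\boldsymbol{y}')\big)\right)^2\mu_0(d(\boldsymbol{\xi},\boldsymbol{c})),
\end{equation}
\begin{equation}
I_2 = \left|Z^{-1/2}-(Z')^{-1/2}\right|^2\int_X\exp\!\big(-\Phi((\boldsymbol{\xi},\boldsymbol{c});\boldsymbol{y}')\big)\mu_0(d(\boldsymbol{\xi},\boldsymbol{c})).
\end{equation}
For $I_1$ I would use the elementary inequality $|e^{-a}-e^{-b}|\le\tfrac12|a-b|$ for $a,b\ge0$ together with condition (iii) of \Cref{prop:continuous}, giving an integrand bounded by $\tfrac14 M(\|(\boldsymbol{\xi},\boldsymbol{c})\|_X,\rho)^2\|\boldsymbol{y}-\boldsymbol{y}'\|_{\Sigma_n}^2$; since in our setting $M$ depends only on $\rho$ (see the proof of \Cref{prop:continuous}, where $M(\rho)=\rho+C\alpha^+$), the $\mu_0$-integral of $M^2$ is simply $M(\rho)^2$, and $I_1\lesssim \|\boldsymbol{y}-\boldsymbol{y}'\|_{\Sigma_n}^2$. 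For $I_2$, the remaining integral is exactly $Z'$, so $I_2 = |Z^{-1/2}-(Z')^{-1/2}|^2 Z'$; bounding $|Z^{-1/2}-(Z')^{-1/2}| \le \tfrac12\max(Z,Z')^{-3/2}|Z-Z'|$ and estimating $|Z-Z'|\le\int_X|e^{-\Phi(\cdot;\boldsymbol{y})}-e^{-\Phi(\cdot;\boldsymbol{y}')}|\mu_0(d(\boldsymbol{\xi},\boldsymbol{c}))\le\tfrac12 M(\rho)\|\boldsymbol{y}-\boldsymbol{y}'\|_{\Sigma_n}$ again via (iii), and using the lower bounds on $Z,Z'$, yields $I_2\lesssim\|\boldsymbol{y}-\boldsymbol{y}'\|_{\Sigma_n}^2$. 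Collecting the constants into a single $C=C(\rho)$ and taking square roots gives the claim.

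The only genuinely delicate point is ensuring all the constants — the lower bound $\mu_0(B)>0$, the upper bound $K(\rho)$, and the Lipschitz modulus $M(\rho)$ — are uniform over the admissible $\boldsymbol{y},\boldsymbol{y}'$; this is precisely what the uniform-in-$\rho$ form of conditions (i) and (iii) in \Cref{prop:continuous} buys us, so no further work beyond bookkeeping is needed. I do not expect any obstacle here: the argument is the textbook proof of \cite[Thm.~4.2]{stuart_2010} / \cite{Dashti2017} transcribed with $\Phi$ as in \eqref{eq:2.loglike}, and the paper has already done the substantive part by verifying the hypotheses for the Radon-transform likelihood.
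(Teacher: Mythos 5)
The paper does not actually prove this theorem itself — it is stated as a citation to \cite{Dashti2017}, the paper's substantive contribution being the verification of hypotheses (i)--(iii) in \Cref{prop:continuous} — and your reconstruction is precisely the standard Lipschitz--Hellinger splitting into $I_1$ and $I_2$ that the citation invokes, so the approach matches and is correct (including your remark that the hypothesis should really be $\|\boldsymbol{y}\|_{\Sigma_n},\|\boldsymbol{y}'\|_{\Sigma_n}\leq\rho$, which looks like a typo in the paper's statement). The one slip is the intermediate bound $|Z^{-1/2}-(Z')^{-1/2}|\leq\frac{1}{2}\max(Z,Z')^{-3/2}|Z-Z'|$: the mean value theorem gives $\min(Z,Z')^{-3/2}$ rather than $\max$, but since both normalization constants are bounded below by $\exp(-K(\rho))\mu_0(B)>0$ the conclusion is unaffected.
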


Bayesian well-posedness expressed in \Cref{thm:wellposedness} means that the posterior distribution remains bounded when the observation vector $\boldsymbol{y}$ is perturbed.}

\section{Markov Chain Monte Carlo (MCMC) Methods} \label{sec:2.mcmc}
In this section we briefly introduce the random walk Metropolis-Hastings (RWM) method \cite{robert2013monte}, the preconditioned Crank-Nicolson (pCN) method \cite{Cotter2013}, and the Gibbs sampling method \cite{robert2013monte}. These methods allow us to sample from the posterior measure $\boldsymbol{\mu}^{\boldsymbol{y}}$. We choose pCN to sample infinite dimensional Gaussian random variables as it is suitable for random variables on function spaces. Furthermore, the convergence rate of this method is independent of the cut off value $N_{\text{KL}}$ in \eqref{eq:2.KL}.

Let $f:H \to \mathbb R^{k}$, be a $\mu_0$-measurable function. We approximate $\mathbb E f$ as the ergodic average
\begin{equation} \label{eq:2.egodic}
    \mathbb E f := \int f \ \mu_0(d \xi) = \lim_{N_{\text{sample}}\to \infty} \frac 1 {N_{\text{sample}}} \sum_{i=1}^{N_{\text{sample}}} f(\xi^{(i)}) .
\end{equation}
Here, $\{ \xi^{(i)} \}_{i=1}^{N_{\text{sample}}}$ is a Markov chain with a transition kernel $\mathbb K_y$. We refer the reader to \cite{kaipio2006statistical} for properties of a transition kernel suitable for \eqref{eq:2.egodic}.


The aim of an MCMC method is to construct a transition function (a formula to go from $\xi^{(i)}$ to $\xi^{(i+1)}$ in \eqref{eq:2.egodic}) which result in an appropriate transition kernel $\mathbb K_y$. \coyq{As an MCMC method, pCN is suitable for drawing samples from an $H$-valued Gaussian random variables, and is robust under the discretization of the random variables.} 
\coyq{We first introduce pCN to sample from a single Gaussian random variable $\xi$, then discuss} how to generalize this to sample from multiple independent Gaussian random variables.

Let $\mu^{\boldsymbol{y}}$ be the posterior measure on a Hilbert space $(H,\langle \cdot , \cdot \rangle)$ and $\mu_0 \sim \mathcal N (0,Q)$ be a prior measure. Let $\mathbb K(\xi,\cdot)$ be the transition kernel on $H$ and $\eta$ denote a measure on $H\times H$ such that if $\xi \sim \mu^{\boldsymbol{y}}$ then $\zeta|\xi \sim \mathbb K_y(\xi,\cdot)$. We denote by $\eta ^ \perp$ the measure where the roles of $\xi$ and $\zeta$ are reversed. If $\eta^\perp$ is equivalent to $\eta$, in the sense of measure, then the Radon-Nikodym derivative $d\eta^\perp/d \eta$ is well defined and we can define the acceptance probability
\begin{equation} \label{eq:2.acceptance_pcn1}
    a(\xi,\zeta) = \min \left\{ 1, \frac{d\eta^\perp}{d \eta}(\xi,\zeta)\right\}.
\end{equation}
This means $\xi^{(i+1)} := \zeta$ with probability \coyq{$a(\xi^{(i+1)},\zeta)$} and $\xi^{(i+1)} := \xi$ otherwise.

The standard random walk proposal function results in $\eta^\perp$ that is singular with respect to $\eta$ \cite{Cotter2013} when $\xi$ is an $H$-valued function. This results in rejecting all proposed moves with probability 1. The pCN proposal function \cite{Cotter2013} is given by
\begin{equation} \label{eq:2.proposal_pcn}
   \coyq{ \zeta = (1 - b ^2)^{1/2}\xi + b \varrho,}
\end{equation}
where $\varrho \sim \mathcal N(0,Q)$, and \coyq{$b \in [0,1]$}. This choice of the proposal results in a well defined $d\eta^\perp/d \eta$ given by \cite{Cotter2013}
\begin{equation} \label{eq:2.acceptance_pcn1.2}
    \frac{d\eta^\perp}{d \eta}(\xi,\zeta) = \exp( \eta(\xi,\zeta) - \eta(\zeta, \xi) ) = \exp( \Phi(\xi;{\boldsymbol{y}}) - \Phi(\zeta;{\boldsymbol{y}}) ).
\end{equation}
We summarize the pCN sampling method in \Cref{alg:2.pCN}.

\begin{algorithm}
\caption{pCN for collecting $N_{\text{sample}}$ samples}
\label{alg:2.pCN}
\begin{algorithmic}[1]
\STATE{Set $j=0$ and take the initial sample $\xi^{(0)}$.}
\FOR{$j\leq N_{\text{sample}}$ }
\STATE{Propose \coyq{$\zeta = (1 - b ^2)^{1/2}\xi^{(k)} + b \varrho$}, $\varrho\sim \mathcal N(0,Q)$.}
\STATE{Set $\xi^{(j+1)} = \zeta$ with probability $a(\xi^{(j)},\zeta)$ defined in \eqref{eq:2.acceptance_pcn1} together with \eqref{eq:2.acceptance_pcn1.2}, otherwise $\xi^{(j+1)} = \xi^{(j)}$.}
\STATE{ $j \leftarrow j+1$.}
\ENDFOR
\end{algorithmic}
\end{algorithm}

Recall that star-shaped prior \coyq{for each inclusion is defined as} the joint random variable $(\xi_i,c_i)$. To construct an MCMC method to sample from \coyq{$\mu^{\boldsymbol{y}}$} we use a \emph{Gibbs}-type \cite{robert2013monte} sampling method. Such methods alternatively sample from the random variables $\xi_i|c_i,\boldsymbol{y}$ and $c_i|\xi_i,\boldsymbol{y}$ and constructed Markov chain $[\xi_i^{(j)},c_i^{(j)}]_{j=1}^{N_{\text{sample}}}$ contains ergodic properties as in \Cref{eq:2.egodic} \cite{robert2013monte}. In this paper we construct a Gibbs sampler following the structure:
\begin{equation}
    \xi_i^{(j+1)} \sim \mathbb K_{\boldsymbol{y}}^{c_i^{(j)}} (\xi_i^{(j)},\cdot), \qquad c_i^{(j+1)} \sim \mathbb L_{\boldsymbol{y}}^{\xi_i^{(j+1)}} (c_i^{(j)},\cdot).
\end{equation}
Here, $\mathbb K_{\boldsymbol{y}}^{c_i}$ and $\mathbb L_{\boldsymbol{y}}^{\xi_i}$ are Metropolis-Hastings Markov kernel reversible with respect to $\xi_i|c_i,\boldsymbol{y}$ and $c_i|\xi_i,\boldsymbol{y}$, respectively. For the random variable $\xi_i|c_i,\boldsymbol{y}$ we use a pCN proposal function with the acceptance probability of
\begin{equation} \label{eq:2.acceptance_pcn2.1}
    a(\xi_i,\zeta) = \min\{ 1, \exp( \Phi((\xi_i,c_i);\boldsymbol{y}) - \Phi(\zeta_0,(\zeta,c_i);\boldsymbol{y}) \}.
\end{equation}
For the random variable $c_i|\xi_i,\boldsymbol{y}$ we use the standard random walk MH proposal function with the acceptance probability of
\begin{equation} \label{eq:2.gibbs_geom}
    r(c_i,o) = \min\{ 1, \exp( \Phi((\xi_i,c_i);\boldsymbol{y}) - \Phi((\xi_i,o);\boldsymbol{y}) \}.
\end{equation}
Note that we dropped the uniform prior in \eqref{eq:2.gibbs_geom} as both $c_i$ and $o_i$ are restricted to the same interval. We summarize the Gibbs sampler for $\mu^{\boldsymbol{y}}$ constructed with the star-shaped prior in \Cref{alg:3.gibbs}. \correct{Here, $N_{\text{pCN}}$ and $N_{\text{MH}}$ represent the number of within-Gibbs samples for each component.}


\correct{
\begin{algorithm}
\caption{ The Gibbs sampling method for collecting $N_{\text{sample}}$ samples}
\label{alg:3.gibbs}
\begin{algorithmic}[1]
\STATE{Set $j=0$ and take the initial sample $( \xi_i^{(0)}, c_i^{(0)})$.}
\FOR{$j\leq N_{\text{sample}}$ }
\STATE{Set $\xi_i^{(j,0)} = \xi_i^{(j)}$.}
\FOR{$k\leq N_{\text{pCN}}$}
\STATE{Propose $\zeta = (1 - b_1 ^2)^{1/2}\xi^{(j,k)} + b_1 \varrho$, $\varrho\sim \mathcal N(0,Q)$.}
\STATE{Set $\xi_i^{(j,k+1)} = \zeta$ with probability $a(\xi_i^{(j,k)},\zeta)$ defined in \eqref{eq:2.acceptance_pcn2.1}, otherwise $\xi_i^{(j,k+1)} = \xi_i^{(j,k)}$.}
\STATE{ $k \leftarrow k+1$.}
\ENDFOR
\STATE{Set $\xi_i^{(j+1)} = \xi_i^{(j,k+1)}$.}

\STATE{ Set $c_i^{(j,0)} = c_i^{(j)}$.}
\FOR{$k\leq N_{\text{MH}}$}
\STATE{Propose $o = c_i^{(j,k)} + b_2 \rho$, $\rho\sim \mathcal N(0,I)$.}
\STATE{Set $c_i^{(j,k+1)} = o$ with probability $r(c_i^{(j,k)},o)$ defined in \eqref{eq:2.gibbs_geom}, otherwise $c_i^{(j,k+1)} = c_i^{(j,k)}$.}
\STATE{ $k \leftarrow k+1$.}
\ENDFOR
\STATE{ Set $c_i^{(j+1)} = c_i^{(j,k+1)}$. }
\STATE{ $j \leftarrow j+1$.}
\ENDFOR
\end{algorithmic}
\end{algorithm}
}


\bibliographystyle{siamplain}
\bibliography{references}

\begin{thebibliography}{10}

\bibitem{stuart_2010}
{\em Inverse problems:\ a {B}ayesian perspective}, Acta Numerica, 19,
  \url{https://doi.org/10.1017/S0962492910000061}.

\bibitem{abramowitz1988handbook}
{\sc M.~Abramowitz, I.~A. Stegun, and R.~H. Romer}, {\em Handbook of
  Mathematical Functions with Formulas, Graphs, and Mathematical Tables},
  American Association of Physics Teachers, 1988.

\bibitem{alvino2004tomographic}
{\sc C.~V. Alvino and A.~J. Yezzi}, {\em Tomographic reconstruction of
  piecewise smooth images}, in Proceedings of the 2004 IEEE Computer Society
  Conference on Computer Vision and Pattern Recognition, 2004. CVPR 2004.,
  vol.~1, IEEE, 2004, pp.~I--576--I--581.

\bibitem{banerjee2014linear}
{\sc S.~Banerjee and A.~Roy}, {\em Linear algebra and matrix analysis for
  statistics}, vol.~181, Crc Press Boca Raton, 2014.

\bibitem{bubba_tatiana_a_2016_1254204}
{\sc T.~A. Bubba, A.~Hauptmann, S.~Huotari, J.~Rimpeläinen, and S.~Siltanen},
  {\em Tomographic {X}-ray data of a lotus root filled with attenuating
  objects}, Sept. 2016, \url{https://doi.org/10.5281/zenodo.1254204}.

\bibitem{1609.07299}
{\sc T.~A. Bubba, A.~Hauptmann, S.~Huotari, J.~Rimpeläinen, and S.~Siltanen},
  {\em Tomographic x-ray data of a lotus root filled with attenuating objects},
  2016, \url{https://arxiv.org/abs/arXiv:1609.07299}.

\bibitem{doi:10.1137/120894877}
{\sc T.~Bui-Thanh and O.~Ghattas}, {\em An analysis of infinite dimensional
  bayesian inverse shape acoustic scattering and its numerical approximation},
  SIAM/ASA J. Uncertainty Quantification, 2 (2014), pp.~203--222,
  \url{https://doi.org/10.1137/120894877}.

\bibitem{chan2001active}
{\sc T.~F. Chan and L.~A. Vese}, {\em Active contours without edges}, IEEE
  Trans. Image Processing, 10 (2001), pp.~266--277.

\bibitem{chapdelaine2019variational}
{\sc C.~Chapdelaine, A.~Mohammad-Djafari, N.~Gac, and E.~Parra}, {\em
  Variational {B}ayesian approach in model-based iterative reconstruction for
  {3D} {X}-ray computed tomography with {G}auss-{M}arkov-{P}otts prior}, in
  Multidisciplinary Digital Publishing Institute Proceedings, vol.~33, 2019,
  p.~4.

\bibitem{chen2009bayesian}
{\sc Y.~Chen, D.~Gao, C.~Nie, L.~Luo, W.~Chen, X.~Yin, and Y.~Lin}, {\em
  Bayesian statistical reconstruction for low-dose {X}-ray computed tomography
  using an adaptive-weighting nonlocal prior}, Computerized Medical Imaging and
  Graphics, 33 (2009), pp.~495--500.

\bibitem{Cotter2013}
{\sc S.~L. Cotter, G.~O. Roberts, A.~M. Stuart, and D.~White}, {\em {MCMC}
  methods for functions: Modifying old algorithms to make them faster},
  Statistical Science, 28 (2013), \url{https://doi.org/10.1214/13-sts421}.

\bibitem{dahl2017computing}
{\sc V.~A. Dahl, A.~B. Dahl, and P.~C. Hansen}, {\em Computing segmentations
  directly from {X}-ray projection data via parametric deformable curves},
  Measurement Science and Technology, 29 (2017), p.~014003.

\bibitem{1930-8337_2012_2_183}
{\sc M.~Dashti, S.~Harris, and A.~Stuart}, {\em {Besov priors for Bayesian
  inverse problems}}, Inverse Problems and Imaging, 6 (2012), pp.~183--200.

\bibitem{Dashti2017}
{\sc M.~Dashti and A.~M. Stuart}, {\em The bayesian approach to inverse
  problems}, in Handbook of Uncertainty Quantification, R.~Ghanem, D.~Higdon,
  and H.~Owhadi, eds., Springer, 2017, pp.~311--428,
  \url{https://doi.org/10.1007/978-3-319-12385-1_7}.

\bibitem{debreuve1998attenuation}
{\sc E.~Debreuve, M.~Barlaud, G.~Aubert, and J.~Darcourt}, {\em Attenuation map
  segmentation without reconstruction using a level set method in nuclear
  medicine imaging}, in Proc. 1998 International Conference on Image
  Processing, ICIP98, vol.~1, IEEE, 1998, pp.~34--38.

\bibitem{dunlop2020hyperparameter}
{\sc M.~M. Dunlop, T.~Helin, and A.~M. Stuart}, {\em Hyperparameter estimation
  in {B}ayesian {MAP} estimation: Parameterizations and consistency}, The SMAI
  J Computational Mathematics, 6 (2020), pp.~69--100.

\bibitem{Dunlop2016}
{\sc M.~M. Dunlop, M.~A. Iglesias, and A.~M. Stuart}, {\em Hierarchical
  {B}ayesian level set inversion}, Statistics and Computing, 27 (2016),
  pp.~1555--1584, \url{https://doi.org/10.1007/s11222-016-9704-8}.

\bibitem{1930-8337_2016_4_1007}
{\sc M.~M. Dunlop and A.~M. Stuart}, {\em The {B}ayesian formulation of {EIT}:
  Analysis and algorithms}, 2016, \url{https://doi.org/10.3934/ipi.2016030}.

\bibitem{elhalawani2017matched}
{\sc H.~Elhalawani, A.~S. Mohamed, A.~L. White, J.~Zafereo, A.~J. Wong, J.~E.
  Berends, S.~AboHashem, B.~Williams, J.~M. Aymard, A.~Kanwar, et~al.}, {\em
  Matched computed tomography segmentation and demographic data for
  oropharyngeal cancer radiomics challenges}, Scientific Data, 4 (2017),
  p.~170077.

\bibitem{faridani1997local}
{\sc A.~Faridani, D.~V. Finch, E.~L. Ritman, and K.~T. Smith}, {\em {Local
  tomography II}}, SIAM Journal on Applied Mathematics, 57 (1997),
  pp.~1095--1127.

\bibitem{faridani1992local}
{\sc A.~Faridani, E.~L. Ritman, and K.~T. Smith}, {\em {Local tomography}},
  SIAM Journal on Applied Mathematics, 52 (1992), pp.~459--484.

\bibitem{gelman2013bayesian}
{\sc A.~Gelman, J.~B. Carlin, H.~S. Stern, D.~B. Dunson, A.~Vehtari, and D.~B.
  Rubin}, {\em Bayesian Data Analysis}, CRC Press, 2013.

\bibitem{gelman1992inference}
{\sc A.~Gelman and D.~B. Rubin}, {\em Inference from iterative simulation using
  multiple sequences}, Statistical Science, 7 (1992), pp.~457--472.

\bibitem{gibson2018automatic}
{\sc E.~Gibson, F.~Giganti, Y.~Hu, E.~Bonmati, S.~Bandula, K.~Gurusamy,
  B.~Davidson, S.~P. Pereira, M.~J. Clarkson, and D.~C. Barratt}, {\em
  Automatic multi-organ segmentation on abdominal ct with dense v-networks},
  IEEE Trans. Medical Imaging, 37 (2018), pp.~1822--1834.

\bibitem{grimmett2001probability}
{\sc G.~Grimmett and D.~Stirzaker}, {\em Probability and Random Processes},
  no.~391, Oxford University Press, 2001.

\bibitem{Hansen2021}
{\sc P.~C. Hansen, J.~S. Jørgensen, and W.~R.~B. Lionheart}, eds., {\em
  Computed Tomography: Algorithms, Insight, and Just Enough Theory}, SIAM, PA,
  2021.

\bibitem{huang2020bayesian}
{\sc J.~Huang, Z.~Deng, and L.~Xu}, {\em Bayesian approach for inverse interior
  scattering problems with limited aperture}, Applicable Analysis,  (2020),
  pp.~1--14.

\bibitem{nla.cat-vn2458246}
{\sc I.~A. Ibragimov and Y.~A. Rozanov}, {\em Gaussian Random Processes},
  Springer-Verlag, NY, 1978.

\bibitem{iglesias2015bayesian}
{\sc M.~A. Iglesias, Y.~Lu, and A.~M. Stuart}, {\em A {B}ayesian level set
  method for geometric inverse problems}, 2015, \url{https://doi.org/arXiv
  preprint arXiv:1504.00313}.

\bibitem{kaipio2006statistical}
{\sc J.~Kaipio and E.~Somersalo}, {\em Statistical and Computational Inverse
  Problems}, Springer, 2006.

\bibitem{arviz_2019}
{\sc R.~Kumar, C.~Carroll, A.~Hartikainen, and O.~Martin}, {\em Arviz a unified
  library for exploratory analysis of {B}ayesian models in {P}ython}, Journal
  of Open Source Software, 4 (2019), p.~1143,
  \url{https://doi.org/10.21105/joss.01143},
  \url{https://doi.org/10.21105/joss.01143}.

\bibitem{Lassas2009}
{\sc M.~Lassas, E.~Saksman, and S.~Siltanen}, {\em Discretization-invariant
  {B}ayesian inversion and {B}esov space priors}, Inverse Problems {\&}
  Imaging, 3 (2009), pp.~87--122, \url{https://doi.org/10.3934/ipi.2009.3.87}.

\bibitem{ley2001lower}
{\sc O.~Ley et~al.}, {\em Lower-bound gradient estimates for first-order
  hamilton-jacobi equations and applications to the regularity of propagating
  fronts}, Advances in Differential Equations, 6 (2001), pp.~547--576.

\bibitem{Lindgren2011}
{\sc F.~Lindgren, H.~Rue, and J.~Lindstr\"{o}m}, {\em An explicit link between
  {G}aussian fields and {G}aussian {M}arkov random fields: the stochastic
  partial differential equation approach}, Journal of the Royal Statistical
  Society: Series B (Statistical Methodology), 73 (2011), pp.~423--498,
  \url{https://doi.org/10.1111/j.1467-9868.2011.00777.x}.

\bibitem{liu2018image}
{\sc S.~Liu, J.~Jia, Y.~D. Zhang, and Y.~Yang}, {\em Image reconstruction in
  electrical impedance tomography based on structure-aware sparse {B}ayesian
  learning}, IEEE Trans. Medical Imaging, 37 (2018), pp.~2090--2102.

\bibitem{MarkkanenRoininen}
{\sc M.~Markkanen, L.~Roininen, J.~M. Huttunen, and S.~Lasanen}, {\em {C}auchy
  difference priors for edge-preserving {B}ayesian inversion}, Journal of
  Inverse and Ill-posed Problems, 27 (2019), pp.~225--240,
  \url{https://doi.org/doi:10.1515/jiip-2017-0048}.

\bibitem{natterer2001mathematics}
{\sc F.~Natterer}, {\em The Mathematics of Computerized Comography}, SIAM,
  2001.

\bibitem{mcbook}
{\sc A.~B. Owen}, {\em Monte Carlo Theory, Methods and Examples}, 2013.

\bibitem{pardo2018statistical}
{\sc L.~Pardo}, {\em Statistical Inference Based on Divergence Measures}, CRC
  press, 2018.

\bibitem{10.5555/1965423}
{\sc J.~R. Parker}, {\em Algorithms for Image Processing and Computer Vision},
  Wiley Publishing, 2nd~ed., 2010.

\bibitem{ramlau2007mumford}
{\sc R.~Ramlau and W.~Ring}, {\em A {M}umford--{S}hah level-set approach for
  the inversion and segmentation of {X}-ray tomography data}, Journal of
  Computational Physics, 221 (2007), pp.~539--557.

\bibitem{rasmussen2003gaussian}
{\sc C.~E. Rasmussen}, {\em Gaussian processes in machine learning}, in Summer
  school on machine learning, Springer, 2003, pp.~63--71.

\bibitem{robert2013monte}
{\sc C.~Robert and G.~Casella}, {\em Monte Carlo Statistical Methods},
  Springer, 2013.

\bibitem{Roininen2014}
{\sc L.~Roininen, , J.~M.~J. Huttunen, and S.~Lasanen}, {\em
  Whittle-{M}at\'{e}rn priors for {B}ayesian statistical inversion with
  applications in electrical impedance tomography}, Inverse Problems {\&}
  Imaging, 8 (2014), pp.~561--586,
  \url{https://doi.org/10.3934/ipi.2014.8.561}.

\bibitem{SAIBABA2021110391}
{\sc A.~K. Saibaba, P.~Prasad, E.~{de Sturler}, E.~Miller, and M.~E. Kilmer},
  {\em Randomized approaches to accelerate {MCMC} algorithms for {B}ayesian
  inverse problems}, Journal of Computational Physics, 440 (2021), p.~110391,
  \url{https://doi.org/10.1016/j.jcp.2021.110391}.

\bibitem{Smith:85}
{\sc K.~T. Smith and F.~Keinert}, {\em {Mathematical foundations of computed
  tomography}}, Appl. Opt., 24 (1985), pp.~3950--3957,
  \url{https://doi.org/10.1364/AO.24.003950},
  \url{http://opg.optica.org/ao/abstract.cfm?URI=ao-24-23-3950}.

\bibitem{Smith2009}
{\sc N.~B. Smith and A.~Webb}, {\em Introduction to Medical Imaging}, Cambridge
  University Press, 2009, \url{https://doi.org/10.1017/cbo9780511760976}.

\bibitem{soussen2004polygonal}
{\sc C.~Soussen and A.~Mohammad-Djafari}, {\em Polygonal and polyhedral contour
  reconstruction in computed tomography}, IEEE Trans. Image Processing, 13
  (2004), pp.~1507--1523.

\bibitem{sullivan1995reconstructing}
{\sc S.~Sullivan, A.~Noble, and J.~Ponce}, {\em On reconstructing curved object
  boundaries from sparse sets of {X}-ray images}, in International Conference
  on Computer Vision, Virtual Reality, and Robotics in Medicine, Springer,
  1995, pp.~385--391.

\bibitem{thirion1992segmentation}
{\sc J.-P. Thirion}, {\em Segmentation of tomographic data without image
  reconstruction}, IEEE Trans. Medical Imaging, 11 (1992), pp.~102--110.

\bibitem{2104.06919}
{\sc F.~Uribe, J.~M. Bardsley, Y.~Dong, P.~C. Hansen, and N.~A.~B. Riis}, {\em
  A hybrid {G}ibbs sampler for edge-preserving tomographic reconstruction with
  uncertain view angles}, 2021, \url{https://arxiv.org/abs/arXiv:2104.06919}.

\bibitem{vainberg1981reconstruction}
{\sc E.~Vainberg, I.~Kazak, and V.~Kurozaev}, {\em Reconstruction of the
  internal three-dimensional structure of objects based on real-time integral
  projections}, Soviet Journal of Nondestructive Testing-USSR, 17 (1981),
  pp.~415--423.

\bibitem{vanderWalt2014}
{\sc S.~van~der Walt, J.~L. Sch\"{o}nberger, J.~Nunez-Iglesias, F.~Boulogne,
  J.~D. Warner, N.~Yager, E.~Gouillart, and T.~Yu}, {\em scikit-image: image
  processing in python}, {PeerJ}, 2 (2014), p.~e453,
  \url{https://doi.org/10.7717/peerj.453}.

\bibitem{10.1214/20-BA1221}
{\sc A.~Vehtari, A.~Gelman, D.~Simpson, B.~Carpenter, and P.-C. B\:urkner},
  {\em {Rank-Normalization, Folding, and Localization: An Improved
  $\widehat{R}$ for Assessing Convergence of MCMC (with Discussion)}}, Bayesian
  Analysis, 16 (2021), pp.~667 -- 718, \url{https://doi.org/10.1214/20-BA1221},
  \url{https://doi.org/10.1214/20-BA1221}.

\bibitem{wang2017x}
{\sc L.~Wang, A.~Mohammad-Djafari, and N.~Gac}, {\em X-ray computed tomography
  using a sparsity enforcing prior model based on haar transformation in a
  bayesian framework}, Fundamenta Informaticae, 155 (2017), pp.~449--480.

\bibitem{wang2018comprehensive}
{\sc S.~Wang, A.~Chen, L.~Yang, L.~Cai, Y.~Xie, J.~Fujimoto, A.~Gazdar, and
  G.~Xiao}, {\em Comprehensive analysis of lung cancer pathology images to
  discover tumor shape and boundary features that predict survival outcome},
  Scientific Reports, 8 (2018), pp.~1--9.

\bibitem{Weaver1985}
{\sc J.~R. Weaver}, {\em Centrosymmetric (cross-symmetric) matrices, their
  basic properties, eigenvalues, and eigenvectors}, The American Mathematical
  Monthly, 92 (1985), pp.~711--717,
  \url{https://doi.org/10.1080/00029890.1985.11971719}.

\bibitem{webber2020joint}
{\sc J.~W. Webber, E.~T. Quinto, and E.~L. Miller}, {\em A joint reconstruction
  and lambda tomography regularization technique for energy-resolved {X}-ray
  imaging}, Inverse Problems, 36 (2020), p.~074002.

\bibitem{weston2019automated}
{\sc A.~D. Weston, P.~Korfiatis, T.~L. Kline, K.~A. Philbrick, P.~Kostandy,
  T.~Sakinis, M.~Sugimoto, N.~Takahashi, and B.~J. Erickson}, {\em Automated
  abdominal segmentation of ct scans for body composition analysis using deep
  learning}, Radiology, 290 (2019), pp.~669--679.

\bibitem{yoon2006level}
{\sc S.~Yoon, A.~R. Pineda, and R.~Fahrig}, {\em Level set reconstruction for
  sparse angularly sampled data}, in 2006 IEEE Nuclear Science Symposium
  Conference Record, vol.~6, IEEE, 2006, pp.~3420--3423.

\bibitem{yoon2010simultaneous}
{\sc S.~Yoon, A.~R. Pineda, and R.~Fahrig}, {\em Simultaneous segmentation and
  reconstruction: A level set method approach for limited view computed
  tomography}, Medical Physics, 37 (2010), pp.~2329--2340.

\bibitem{zhang2004inconsistent}
{\sc H.~Zhang}, {\em Inconsistent estimation and asymptotically equal
  interpolations in model-based geostatistics}, Journal of the American
  Statistical Association, 99 (2004), pp.~250--261.

\end{thebibliography}
\end{document}